\newcommand{\reals}{\mathbb{R}}
\newcommand{\G}{\mathcal{G}}
\newcommand{\X}{\mathfrak{X}}
\newcommand{\bs}{\boldsymbol}
\newcommand{\Ind}{\mathbbm{1}}
\newcommand{\vep}{\varepsilon}
\DeclareMathOperator{\pa}{pa}
\DeclareMathOperator{\ch}{ch}
\DeclareMathOperator{\dec}{de}
\DeclareMathOperator{\sterile}{sterile}
\theoremstyle{plain}
\newtheorem{lem}{Lemma}[section]
\newtheorem{thm}[lem]{Theorem}
\newtheorem{prop}[lem]{Proposition}
\newtheorem{cor}[lem]{Corollary}
\theoremstyle{definition}
\newtheorem{dfn}[lem]{Definition}
\newtheorem{exm}[lem]{Example}
\newcommand\indep{\protect\mathpalette{\protect\independenT}{\perp}}
\def\independenT#1#2{\mathrel{\rlap{$#1#2$}\mkern2mu{#1#2}}}
\newcommand{\M}{{\mathcal{M}}}
\renewcommand{\X}{{\mathfrak{X}}}
\newcommand{\F}{\mathcal{F}}
\renewcommand{\P}{{p}}
\newcommand{\p}{p}
\newcommand{\x}{{\bs x}}
\newcommand{\TC}{{\rm TC}}
\newcommand{\TS}{{\rm TS}}
\begin{document}




%

\title{Margins of discrete Bayesian networks}
\author{Robin J.\ Evans}
\maketitle

\begin{abstract}
Bayesian network models with latent variables are widely used in statistics 
and machine learning.  In this paper we provide a complete algebraic 
characterization of Bayesian network models with latent variables 
when the observed variables are discrete and no assumption is made
about the state-space of the latent variables.  We show that it is 
algebraically equivalent 
to the so-called nested Markov model, meaning that the two are the
same up to inequality constraints on the joint probabilities. In 
particular these two models have the same dimension.  The nested Markov model 
is therefore the best possible description of the latent variable 
model that avoids consideration of inequalities, 
which are extremely complicated in general.  A consequence
of this is that the constraint finding algorithm of 
\citet{tian:02} is complete for finding equality constraints.

Latent variable models suffer
from difficulties of unidentifiable parameters and non-regular
asymptotics; in contrast the nested Markov model is fully
identifiable, represents a curved exponential family of known
dimension, and can easily be fitted using an explicit
parameterization.  
\end{abstract}

%
%

\section{Introduction}

Directed acyclic graph (DAG) models, also known as Bayesian network models,
are widely used multivariate models in probabilistic reasoning,
machine learning and causal inference \citep{bishop:07, darwiche:09,
  pearl:09}.  These models are defined by simple factorizations of the
joint distribution, and in the case of discrete or jointly Gaussian
random variables, are curved exponential families of known dimension.
The inclusion of latent variables within Bayesian network models can
greatly increase their flexibility, and also account for unobserved
confounding.  However, this flexibility comes at the cost of creating
models that are not easy to explicitly describe when considered as
marginal models over the observed variables.  Latent variable models
generally do not have fully identifiable parameterizations, and
contain `singularities' that lead to non-regular asymptotics
\citep{drton:09}.  In addition, using them may force a modeller to
specify a parametric structure over the latent variables, introducing
additional assumptions that are generally difficult to test and may be
unreasonable.

If no parametric assumptions are made about the latent variables, and
no assumption is made about their state-space, this leads to an
implicitly defined \emph{marginal model}.  The marginal model has the
advantage of avoiding some of the assumptions made by a parametric
latent variable model, however no explicit characterization of the
model is available, and nor is there any obvious method for fitting it
to data.

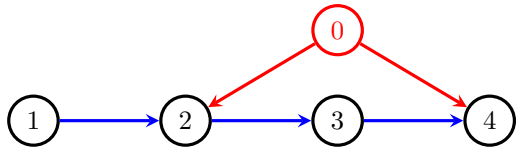
\begin{figure}
 \begin{center}
 \begin{tikzpicture}
 [rv/.style={circle, draw, very thick, minimum size=6.5mm, inner sep=0.75mm}, node distance=20mm, >=stealth]
 \pgfsetarrows{latex-latex};
 \node[rv]  (1)              {$1$};
 \node[rv, right of=1] (2) {$2$};
 \node[rv, right of=2] (3) {$3$};
 \node[rv, right of=3] (4) {$4$};
 \node at (3) [rv, color=red, yshift=12mm] (U) {$0$};
 \draw[->, very thick, color=blue] (1) -- (2);
 \draw[->, very thick, color=blue] (2) -- (3);
 \draw[->, very thick, color=blue] (3) -- (4);
 \draw[<-, very thick, color=red] (2) -- (U);
 \draw[->, very thick, color=red] (U) -- (4);
 \end{tikzpicture}
 \caption{A directed acyclic graph on five vertices.}
 \label{fig:dag}
 \end{center}
\end{figure}

\begin{exm} \label{exm:one} Consider the DAG on five vertices shown in
  Figure \ref{fig:dag}.  The graph represents a multivariate model
  over five random variables $X_0$, $X_1$, $X_2$, $X_3$ and $X_4$, with the
  restriction that the joint density factorizes as
\begin{align*}
\p(x_0, x_1, x_2, x_3, x_4)
& = \p(x_0) \cdot \p(x_1) \cdot \p(x_2 \,|\, x_0, x_1) \cdot \p(x_3 \,|\, x_2) \cdot \p(x_4 \,|\, x_0, x_3);
\end{align*}
here, for example, $p(x_3 \,|\, x_2)$ represents the conditional
density of $X_3$ given $X_2$.  This model arises naturally in the
context of dynamic treatment regimes and longitudinal exposures
\citep{robins:86}: $X_1$ and $X_3$ represent treatments and 
$X_2$ and $X_4$ some outcome of interest.  The treatments are
randomized, though the second treatment $X_3$ may depend upon 
the first outcome $X_2$, for example a dose may be dynamically 
adjusted.  Since the outcomes are measured on the same patient,
they are assumed to be correlated due to a common cause $X_0$,
which might represent an underlying health status, as well as genetic 
and lifestyle factors.

If we treat $X_0$ as a latent variable, the
\emph{marginal model} over the remaining observed variables
$(X_1, X_2, X_3, X_4)$ is the collection of probability distributions
that can be written in the form
\begin{align}
\lefteqn{\p(x_1, x_2, x_3, x_4)} \nonumber\\
& \qquad = \int_{\X_0} \p(x_0) \cdot \p(x_1) \cdot \p(x_2 \,|\, x_0, x_1) \cdot \p(x_3 \,|\, x_2) \cdot \p(x_4 \,|\, x_0, x_3) \, dx_0. \label{eqn:model}
\end{align}
That is, any $(X_1,X_2,X_3,X_4)$-margin of a distribution which
factorizes according to the DAG over all five variables, for any
state-space or distribution of $X_0$\footnote{In general
 it is sufficient to assume hidden variables are uniform on $(0,1)$; 
	for this particular graph, we can choose $X_0$ to be
  finite and discrete without loss of generality provided 
  it has a sufficiently large number of states.}.

From (\ref{eqn:model}) we can deduce that the conditional independence
$X_3 \indep X_1 \,|\, X_2$ holds in the marginal model; i.e.\
\begin{align}
p(x_3 \,|\, x_1, x_2) = p(x_3 \,|\, x_2). \label{eqn:ci}
\end{align}
In addition this model satisfies the so-called \emph{Verma constraint}
of \citet{robins:86} \citep[see also][]{verma:90}, because the
expression
\begin{align}
q(x_4 \,|\, x_3) \equiv \sum_{x_2} p(x_2 \,|\, x_1) \cdot p(x_4 \,|\, x_1, x_2, x_3) \label{eqn:vc}
\end{align}
does not depend upon $x_1$ (see Example \ref{exm:verma}).  

The set of distributions satisfying both (\ref{eqn:ci}) and
(\ref{eqn:vc}) is a so-called \emph{nested Markov model}
\citep{richardson:17}.  If the four observed variables are binary these
equations represent four independent constraints, and the nested model is
therefore an 11-dimensional subset of the 15-dimensional probability
simplex.  

It is not immediately clear whether or not this nested model is the
same as the marginal model defined by (\ref{eqn:model}): in 
principle the marginal model might impose additional restrictions
beyond (\ref{eqn:ci}) and (\ref{eqn:vc}).  
This begs the question, is the set of distributions that satisfy 
(\ref{eqn:model}) characterized by (\ref{eqn:ci}) and (\ref{eqn:vc})?  
\end{exm}

The answer turns out to be `almost', in the sense that the set of
distributions that can be written in the form (\ref{eqn:model}) is a full-dimensional
subset of the set that satisfy (\ref{eqn:ci}) and (\ref{eqn:vc}), though 
there are additional inequality constraints.  
This situation is represented by Figure
\ref{fig:diagram}, which shows the marginal model ($\M$, in blue) 
lying strictly within the nested model ($\mathcal{N}$, in red), 
but the two having the same dimension.  

This paper shows that 
this result holds generally for all models of this kind.  That is,
the constraints on the model are precisely those derived from the 
algorithm of \citet{tian:02}, as represented by the statistical 
model of \citet{richardson:17}.

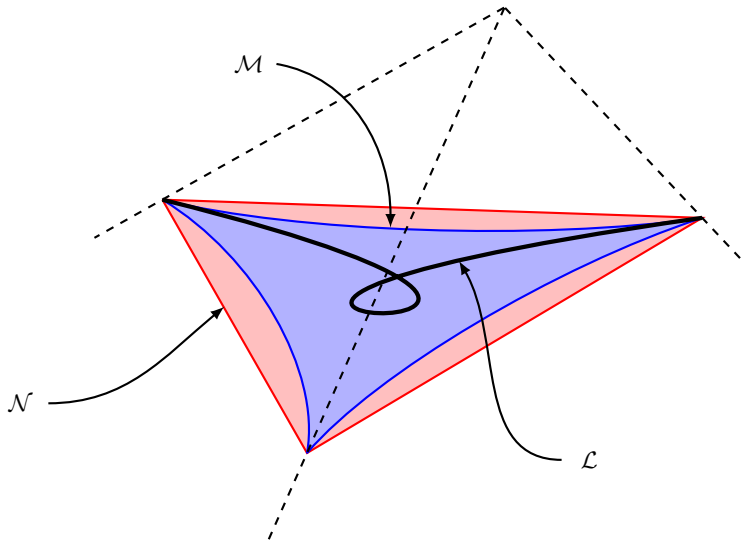
\begin{figure}
\begin{center}
\begin{tikzpicture}[scale=1.5,every node/.style={minimum size=0.7cm}]
\small
\begin{scope}
[
 every node/.append style={yslant=0.6,xslant=-1},yslant=0.6,xslant=-1
   ]
  \node[minimum size=0mm, inner sep=0mm] (v0) at (90:2) {};
  \node[minimum size=0mm, inner sep=0mm] (v1) at (210:2) {};
  \node[minimum size=0mm, inner sep=0mm] (v2) at (330:2) {};
  \draw[thick, red, fill=red!25] (v0.center) -- (v1.center) -- (v2.center) -- (v0.center);
\draw [blue, thick, fill=blue!30] (v0.260) .. controls +(260:1) and +(40:1) .. (v1.40)
-- (v1.20) .. controls +(20:1) and +(160:1) .. (v2.160)
-- (v2.140) .. controls +(140:1) and +(280:1) .. (v0.280);
\draw[ultra thick] (v0.center) .. controls +(280:5) and +(140:5) .. (v2.center);
\coordinate (s) at (2,1);
\coordinate (m) at (45:0.78);
\coordinate (n) at (135:1.03);
\coordinate (l) at (0.5,-0.1);
\end{scope}

\draw[-latex,thick](-1,2) node[left,scale=1]{$\mathcal{M}$}
  to[out=350,in=90] (m);
\draw[-latex,thick](-3,-1) node[left,scale=1]{$\mathcal{N}$}
  to[out=0,in=220] (n);  
\draw[-latex,thick](1.5,-1.5) node[right,scale=1, align=center]{$\mathcal{L}$}
  to[out=180,in=300] (l);
  
\node[minimum size=0mm, inner sep=0mm] (v3) at (1,2.5) {};
\node[minimum size=0mm, inner sep=0mm] (w0) at ($1.2*(v0)-.2*(v3)$) {};
\node[minimum size=0mm, inner sep=0mm] (w1) at ($1.2*(v1)-.2*(v3)$) {};
\node[minimum size=0mm, inner sep=0mm] (w2) at ($1.2*(v2)-.2*(v3)$) {};

\draw[black,thick,dashed] (v3) -- (w0);
\draw[black,thick,dashed] (v3) -- (w1);
\draw[black,thick,dashed] (v3) -- (w2);
\normalsize
\end{tikzpicture}
\end{center}
\caption{Diagrammatic representation of the probability simplex (dashed outline) 
and a marginal model ($\M$, in blue) sitting strictly within the associated
nested model ($\mathcal{N}$, in red); note the two models have the same dimension.  
  Any parametric latent variable model will be contained strictly within 
  $\M$, but it may have a smaller dimension and be non-regular 
  (an example is shown as $\mathcal{L}$).  The `ordinary Markov model' is
  not shown, but contains $\mathcal{N}$ and would generally have 
  larger dimension.}
\label{fig:diagram}
\end{figure}

Existing approaches to the problem of describing Bayesian 
network models with hidden variables either make use of parametric
structure on the latent variables \citep[for example,][]{silva:09,
  anandkumar:13}, or are restricted to testing conditional
independences and do not consider constraints such as (\ref{eqn:vc}). 
This latter category includes the ancestral graph models of
\citet{richardson:02} and the equivalent\footnote{The models are
  equivalent if selection variables are not present, which is the case
  throughout this paper.} models on acyclic directed mixed graphs
(ADMGs) of \citet{richardson:03}; these pure conditional
independence models, which we refer to as the \emph{ordinary Markov
  models}, generally have a larger dimension than any latent variable
model, so using them as a proxy leads to a loss of power to distinguish
between certain kinds of model.  

On the other hand parametric hidden variable models suffer from various problems
caused by the choice of state-space.  They may be `too large', in 
the sense that the dimension of the full model is greater than the 
dimension of the model over the observed data, thereby introducing
identifiability problems.  They may also be `too small', in 
that unwanted additional restrictions are implied by the parametric 
structure, and therefore the models have a smaller dimension
than the marginal model: this is depicted by the curve labelled
$\mathcal{L}$ in Figure \ref{fig:diagram}. 

Paradoxically, it may even be the case that a hidden variable
model is `too large' and `too small' at the same time!  For example,
if we use a latent variable in Example \ref{exm:one} with the simplest
possible state-space in which everything is binary, then the full 
model over all five variables has dimension 12; however, we have
already established that the dimension of the marginal model over the observed 
variables is at most 11, so the model is clearly over-parameterized.  
In fact, it can be shown that the 
dimension of this latent variable model over the observed variables is 
only 10, so an additional artificial restriction is present due to the 
choice of a binary latent variable model
(see Appendix \ref{sec:deg}).  

If $X_0$ is given enough states, the latent variable 
model and the marginal model coincide for graphs such 
as the one in Figure \ref{fig:dag}, a fact we will 
exploit in our proofs.  However, this latent variable 
model is less useful for statistical inference because
it is generally massively over-parameterized. 

\subsection{A Short Algebra Tutorial}

This paper makes use of some results from real algebraic
geometry, which provides powerful tools for analysing these 
complicated sets of distributions.  All our statistical 
models are collections of distributions 
within the probability simplex that satisfy certain 
constraints.  
The constraints on a Bayesian network model are conditional
independences, and can be 
represented as the requirement that certain polynomials in the 
probabilities are equal to zero; for 
example the conditional independence $X_1 \indep X_3 \mid X_2$ is
equivalent to
\begin{align*}
p(x_2) \cdot p(x_1, x_2, x_3) - p(x_1, x_2) \cdot p(x_2, x_3) = 0 \qquad \forall x_1, x_2, x_3.
\end{align*}
A set defined by the zeros of polynomials is said to be an
\emph{algebraic variety}, or sometimes an \emph{algebraic set}.  
In addition to equality constraints, these models will satisfy polynomial inequalities;
i.e.\ $p(x_V) \geq 0$.  A set defined by a combination of
polynomial equalities and inequalities is said to be \emph{semi-algebraic};
this category includes many common finite-dimensional statistical models. 
Semi-algebraic sets have the nice property that when we eliminate 
one of the variables or project
onto a linear subspace, they remain semi-algebraic, generally known as the 
Tarski-Seidenberg theorem \citep[see, for example,][Theorem 2.72]{basu:06}.   A consequence
of this is that the margin of any model defined by a semi-algebraic set 
is also defined by a semi-algebraic set. 

The \emph{Zariski closure} of a set is the smallest algebraic variety
that contains it; the fact that this is well-defined is a significant 
result in algebraic geometry.  For a semi-algebraic set one can informally think of
its Zariski closure as the set obtained by keeping the equality constraints 
and `throwing away' the inequality constraints.  

Semi-algebraic sets have many interesting properties, but they are not 
necessarily `nice' from a statistical perspective, in the sense of 
leading to regular asymptotics.  For this we need our set to be a 
\emph{manifold}, i.e.\ to be locally Euclidean.



\subsection{Contribution}

In this paper we show that marginal models with finite discrete
observed variables are algebraically equivalent to the appropriate
nested Markov model, in the sense that the Zariski closures of the
marginal model and the nested model are the same.  
A consequence of this is that a margin of a DAG model and its nested
counterpart have the same dimension, and differ only by inequality
constraints.  The marginal model defined by (\ref{eqn:model}) 
in Example \ref{exm:one} is indeed
11-dimensional, and is algebraically defined by (\ref{eqn:ci}) and
(\ref{eqn:vc}); however, the marginal model also satisfies polynomial
inequality constraints that the nested model does not.  
The result can be interpreted as showing that the constraint
finding algorithm of \citet{tian:02} is `complete', in the sense that
there are no other equality constraints to find without making 
further assumptions.

This means that we have, for the first time, a full algebraic
characterization of margins of Bayesian network models.  It also shows
that the nested model represents a sensible and pragmatic
approximation to the marginal model: inequality constraints are
typically extremely complicated, so the nested model---which has
a factorization criterion, separation criteria, and a discrete
parameterization \citep{richardson:17}---is much easier to work with, and can easily be
fitted with existing algorithms \citep{evans:10}.  In addition, the
nested model inside the probability simplex is a manifold and therefore 
regular whenever the joint distribution is positive,
whereas the marginal model may have a boundary that lies strictly
inside the simplex.  The nested model therefore has better statistical 
properties than the marginal model, in the sense that data generated from
any strictly positive distribution will lead to regular asymptotics.  

Causal discovery methods such as the FCI algorithm that use conditional 
independence constraints could, in principle, be extended to the 
constraints implied by nested models
\citep{sgs:00}; our main result shows that is `as good as it gets',
in the sense that there are no other equality constraints to test 
without making further (e.g.\ parametric) assumptions.  Thus, this paper
probes the limits of what it is possible to learn about causal models
with hidden variables from observational data. 

We work with a class of hyper-graphs called mDAGs, with which we
associate marginals of DAG models \citep{evans:mdag}.  The remainder
of the paper is organized as follows: Section \ref{sec:dags}
introduces DAG models, their margins and mDAGs, and carefully defines
the problem of interest.  Section \ref{sec:nested} describes the
nested Markov property. Section \ref{sec:geared} describes latent variable 
models that can be used to represent the marginal model without loss of
generality, and Section \ref{sec:main} contains the main results of
the paper.  Finally, in Section \ref{sec:smooth} we show that a large
class of marginal models represent smooth manifolds, and provide some
discussion.

\section{Directed Graphical Models} \label{sec:dags}

We begin with some elementary graphical definitions.

\begin{dfn}
  A \emph{directed graph}, $\G(V, \mathcal{E})$, consists of a 
  finite set of vertices, $V$, and a collection of edges, 
  $\mathcal{E}$, which are ordered pairs of distinct
  elements of $V$.  If $(v, w) \in \mathcal{E}$ we denote this by $v \rightarrow
  w$, and say that $v$ is a \emph{parent} of $w$; the set of parents
  of $w$ is denoted by $\pa_\G(w)$.  Similarly $w$ is a \emph{child} of
  $v$, and the child set is denoted by $\ch_\G(v)$.

  A directed graph is \emph{acyclic} if there is no sequence of
  edges $v_1 \rightarrow v_2 \rightarrow \cdots \rightarrow v_k
  \rightarrow v_1$ for $k > 1$.  We call such a graph a \emph{directed
    acyclic graph}, or DAG.
\end{dfn}

Graphs are best understood visually: an example of a DAG with five
vertices and five edges is given in Figure \ref{fig:dag}.  We will
require the following generalization of a DAG that allows for two
separate types of vertex.

\begin{dfn}
  A \emph{conditional DAG} $\G(V, W, \mathcal{E})$ is a DAG with
  vertices $V \dot\cup W$\footnote{Here and throughout $\dot\cup$
    denotes a disjoint union of sets.} and edge set $\mathcal{E}$,
  with the restriction that no vertex in $W$ may have any parents.
  The vertices in $V$ are the \emph{random vertices}, and $W$ the
  \emph{fixed vertices}; these two sets are disjoint.
\end{dfn}

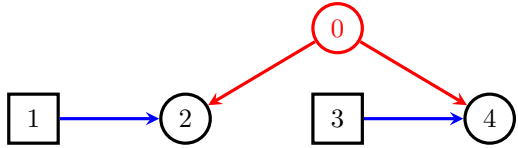
\begin{figure}
 \begin{center}
 \begin{tikzpicture}
 [rv/.style={circle, draw, very thick, minimum size=6.5mm, inner sep=0.75mm}, node distance=20mm, >=stealth]
 \pgfsetarrows{latex-latex};
 \node[rv, rectangle]  (1)              {$1$};
 \node[rv, right of=1] (2) {$2$};
 \node[rv, rectangle, right of=2] (3) {$3$};
 \node[rv, right of=3] (4) {$4$};
 \node at (3) [rv, color=red, yshift=12mm] (U) {$0$};
 \draw[->, very thick, color=blue] (1) -- (2);
 \draw[->, very thick, color=blue] (3) -- (4);
 \draw[<-, very thick, color=red] (2) -- (U);
 \draw[->, very thick, color=red] (U) -- (4);
 \end{tikzpicture}
 \caption{A conditional directed acyclic graph with three random 
 vertices ($0,2,4$) and two fixed vertices ($1,3$).}
 \label{fig:dag2}
 \end{center}
\end{figure}

If $W = \emptyset$, this reduces to the ordinary definition of a DAG.
We denote fixed vertices with square nodes, and random ones with round
nodes: see the example in Figure \ref{fig:dag2}.  

\subsection{Graphical Models}

A graphical model arises from the identification of a graph with a
collection of multivariate probability distributions; see
\citet{lau:96} for an introduction.  Each vertex $v \in V$
represents a random variable $X_v$ taking values in a finite
state-space $\X_v$, and a model for their joint distribution
is determined by the structure of the graph.
With a conditional DAG $\G$ we associate a 
collection of probability measures $P(\cdot \mid x_W)$ on $\X_V \equiv \times_{v \in V}
\X_v$, indexed by $x_W \in \X_W$.  
Mathematically, fixed nodes play a similar role to 
the `parameter nodes' used by \citet{dawid:02}.

Following \citet{lau:96}, we say a \emph{probability kernel} over $\X_A$ given $\X_B$ 
is a non-negative function $q : \X_A \times \X_B \rightarrow \reals$ 
such that $\sum_{x_A} q(x_A \mid x_B) = 1$ for all $x_B \in \X_B$.
A kernel behaves much like a conditional probability distribution, 
but no assumption is made about any distribution over the indexing
set $\X_B$.

We apply the usual definitions for marginalizing and conditioning
in kernels:
\begin{align*}
q(x_A \mid x_B) &\equiv \sum_{x_C} q(x_A, x_C \mid x_B), && q(x_A \mid x_B, x_C) \equiv \frac{q(x_A, x_C \mid x_B)}{q(x_C \mid x_B)}.
\end{align*}
If $q(x_A \mid x_B, x_C)$ does not depend upon $x_B$ then we will
denote it $q(x_A \mid x_C)$, and say that $X_A \indep X_B \mid X_C \, [q]$. 
Here, and elsewhere, we use the shorthand ${VW}$ for $V \cup W$ in 
subscripts.

\begin{dfn}
  Let $\p(x_V \mid x_W)$ be a probability kernel over 
  $\X_V$ indexed by $\X_W$.  We say that $\p$ obeys the
  \emph{factorization criterion} with respect to a DAG $\G$ if it
  factorizes into univariate kernels as
\begin{align}
\p(x_V \,|\, x_W) = \prod_{v \in V} \p(x_v \,|\, x_{\pa(v)}), \qquad x_{VW} \in \X_{VW}. \label{eqn:markov}
\end{align}
\end{dfn}

The definition reduces to the familiar factorization criterion for
DAGs if $W = \emptyset$.  The extra generality will be useful for
discussing Markov properties which involve factorization of the
distribution into conditional pieces.  The fixed vertices are 
analogous to variables that have been conditioned upon; if $p$ satisfies
(\ref{eqn:markov}) then, after renormalization, it also
satisfies the factorization criterion for the same DAG with all
vertices random.

The definition of a Bayesian network can be extended to the case where
no joint density exists by insisting that each random variable $X_v$
can be written as a measurable function of $X_{\pa(v)}$ and an
independent noise variable; we call this the \emph{structural equation
  property}.  If the density exists the two criteria are equivalent,
and since we work with discrete variables this condition is always
satisfied.  Although the factorization property is often simpler to
work with for practical purposes such as modelling and fitting, the
structural equation property is useful in proofs.  The well-known
global Markov property based on d-separation is also equivalent to the
structural equation property \citep{pearl:09}.

\begin{exm} \label{exm:ci} A distribution $P$ with density $\p$ obeys
  the factorization criterion for the graph in Figure \ref{fig:dag} if
  the density has the form
\begin{align*}
p(x_0, x_1, x_2, x_3, x_4)
&= \p(x_0) \cdot \p(x_1) \cdot \p(x_2 \,|\, x_0, x_1) \cdot \p(x_3 \,|\, x_2) \cdot \p(x_4 \,|\, x_0, x_3).
\end{align*}
Such distributions are precisely those which satisfy the conditional
independences
\begin{align*}
X_1 \indep X_0, \qquad X_3 \indep X_0, X_1 \,|\, X_2, \qquad X_4 \indep X_{1}, X_2 \,|\, X_0,X_3. 
\end{align*}
\end{exm}

\begin{exm}
A kernel $\p$ obeys the factorization criterion for the 
conditional DAG in Figure \ref{fig:dag2} if it can be written as
\begin{align*}
p(x_0, x_2, x_4 \,|\, x_1, x_3) = \p(x_0)\cdot \p(x_2 \,|\, x_0, x_1) \cdot \p(x_4 \,|\, x_0, x_3).
\end{align*}
\end{exm}

\subsection{Latent Variables and mDAGs} \label{sec:mdags}

We now introduce the possibility that some of the random variables are
unobserved or \emph{latent}, leaving the marginal distribution over
the remaining \emph{observed} variables.  
We represent the collection of margins of DAG models
using a larger class of hyper-graphs called mDAGs (`marginal
DAGs').  These avoid dealing with latent variables directly, instead
introducing additional edges to represent them.  For example, the DAG
in Figure \ref{fig:dag}, with the vertex 0 treated as a latent
variable, is represented by the mDAG in Figure \ref{fig:verma:mdag}.




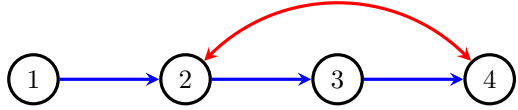
\begin{figure}
 \begin{center}
 \begin{tikzpicture}
 [rv/.style={circle, draw, very thick, minimum size=6.5mm, inner sep=0.75mm}, node distance=20mm, >=stealth]
 \pgfsetarrows{latex-latex};
\begin{scope}
 \node[rv]  (1)              {$1$};
 \node[rv, right of=1] (2) {$2$};
 \node[rv, right of=2] (3) {$3$};
 \node[rv, right of=3] (4) {$4$};
 \draw[->, very thick, color=blue] (1) -- (2);
 \draw[->, very thick, color=blue] (2) -- (3);
 \draw[->, very thick, color=blue] (3) -- (4);
 \draw[<->, very thick, color=red] (2.45) .. controls +(1,1) and +(-1,1) .. (4.135);
\end{scope}
 \end{tikzpicture}
 \caption{An mDAG representing the DAG in Figure \ref{fig:dag}, with the vertex 0 treated as unobserved.}
 \label{fig:verma:mdag}
 \end{center}
\end{figure}

Define an \emph{abstract simplicial complex} $\mathcal{B}$ over $V$ as
a collection of non-empty subsets of $V$ such that (i)
$\{v\} \in \mathcal{B}$ for every $v \in V$, and (ii) if
$A \in \mathcal{B}$ and $B \subseteq A$ with $B \neq \emptyset$, then $B \in \mathcal{B}$.

\begin{dfn}
  An \emph{mDAG}, $\G(V, W, \mathcal{E}, \mathcal{B})$, is hyper-graph
  consisting of a conditional DAG with random vertices $V$, fixed
  vertices $W$ and directed edge set $\mathcal{E}$, together with an
  abstract simplicial complex $\mathcal{B}$ over $V$, called the
  \emph{bidirected faces}.
  
  We say that $\G'(V', W', \mathcal{E}', \mathcal{B}')$ is a 
  \emph{subgraph} of $\G$ if $V' \subseteq V$, 
  $\mathcal{E}' \subseteq \mathcal{E}$,   $\mathcal{B}' \subseteq \mathcal{B}$, 
  and $W' \subseteq V \cup W$: that is, each component is contained 
  within the previous one, but random vertices may become
  fixed.
\end{dfn}

The mDAG was introduced by \citet{evans:mdag}, without the additional
generality of fixed vertices.  This aspect changes very little to the
theory of these graphs, but is necessary for understanding the nested
Markov model; note that bidirected faces only involve the random
vertices.  As with conditional DAGs, when representing mDAGs
graphically the fixed vertices are drawn as square nodes and random
vertices as circles.  

The bidirected simplicial complex is represented
by its maximal non-trivial elements (i.e.\ those of size at least 2), 
called the \emph{bidirected hyperedges}, or just \emph{edges}.  
These are drawn in red, as in
Figure \ref{fig:mdag}(a); in this case $W = \{6\}$ and the maximal
sets of $\mathcal{B}$ are $\{1,2\}$, $\{2,3,4\}$, and $\{3,4,5\}$.

\begin{figure}
\begin{center}
\begin{tikzpicture}[rv/.style={circle, draw, very thick, minimum size=6.5mm, inner sep=1mm}, node distance=25mm, >=stealth]
 \pgfsetarrows{latex-latex};
\begin{scope}
 \node[circle, minimum size=2mm, inner sep=0mm, fill=red] (U2) {};
 \node[rv] (2) at (150:1.7) {$2$};
 \node[rv] (4) at (270:1.7) {$4$};
 \node[rv] (3) at (30:1.7) {$3$};
 \node[circle, minimum size=2mm, inner sep=0mm, fill=red, yshift=-17mm] (U3) at (3) {};
 \node[rv] (5) at ($(U3)+(330:1.7)$) {$5$};
 \node[rv, left of=2] (1) {$1$};
 \node[rv, left of=4, xshift=5mm, rectangle] (6) {$6$};
 \draw[<->, very thick, color=red] (1) -- (2);
 \draw[->, very thick, color=red] (U2) -- (2);
 \draw[->, very thick, color=red] (U2) -- (3);
 \draw[->, very thick, color=red] (U2) -- (4);
 \draw[->, very thick, color=red] (U3) -- (5);
 \draw[->, very thick, color=red] (U3) -- (3);
 \draw[->, very thick, color=red] (U3) -- (4);
 \draw[->, very thick, color=blue] (2) -- (4);
 \draw[->, very thick, color=blue] (3) -- (5);
 \draw[->, very thick, color=blue] (6) -- (4);
 \draw[->, very thick, color=blue] (3) -- (4);
 \draw[->, very thick, color=blue] (1) .. controls +(1.5,1.5) and +(-1.5,1.5) .. (3);
\node[below of=6, yshift=17mm, xshift=5mm] {(a)};
\end{scope}
\begin{scope}[yshift=-5.5cm]
 \node[rv, color=red] (U2) {$u_2$};
 \node[rv] (2) at (150:1.7) {$2$};
 \node[rv] (4) at (270:1.7) {$4$};
 \node[rv] (3) at (30:1.7) {$3$};
 \node[rv, left of=2] (1) {$1$};
 \node[rv, left of=2, xshift=12.5mm, color=red] (U1) {$u_1$};
 \node[rv, yshift=-17mm, color=red] (U3) at (3) {$u_3$};
 \node[rv] (5) at ($(U3)+(330:1.7)$) {$5$};
 \node[rv, left of=4, xshift=5mm, rectangle] (6) {$6$};
 \draw[->, very thick, color=red] (U1) -- (2);
 \draw[->, very thick, color=red] (U1) -- (1);
 \draw[->, very thick, color=red] (U2) -- (2);
 \draw[->, very thick, color=red] (U2) -- (3);
 \draw[->, very thick, color=red] (U2) -- (4);
 \draw[->, very thick, color=red] (U3) -- (5);
 \draw[->, very thick, color=red] (U3) -- (3);
 \draw[->, very thick, color=red] (U3) -- (4);
 \draw[->, very thick, color=blue] (2) -- (4);
 \draw[->, very thick, color=blue] (6) -- (4);
 \draw[->, very thick, color=blue] (3) -- (4);
 \draw[->, very thick, color=blue] (3) -- (5);
 \draw[->, very thick, color=blue] (1) .. controls +(1.5,1.5) and +(-1.5,1.5) .. (3);
\node[below of=6, yshift=17mm, xshift=5mm] {(b)};
\end{scope}
\end{tikzpicture}
\caption{(a) An mDAG, $\G$, and (b) a DAG with hidden variables,
  $\bar{\G}$, representing the same model (the canonical DAG).}
 \label{fig:mdag}
\end{center}
\end{figure}

With each mDAG, $\G$, we can associate a conditional DAG $\bar{\G}$ by
replacing each maximal element $B \in \mathcal{B}$ (of size at least
2) with a new random vertex $u$, such that the children of $u$ are
precisely the vertices in $B$.  The new vertex $u$ becomes the
`unobserved' variable represented by the bidirected edge $B$.  We call
$\bar{\G}$ the \emph{canonical DAG} associated with $\G$.  The mDAG in
Figure \ref{fig:mdag}(a) is thus associated with the canonical DAG in
Figure \ref{fig:mdag}(b).

Our interest in mDAGs lies in their representation of the margin of
the associated canonical DAG, and so we define our model in this
spirit; see \citet{evans:mdag}.

\begin{dfn}
  Let $\G$ be an mDAG with vertices $V \dot\cup W$, and let $\bar{\G}$
  be the canonical DAG with vertices $V \dot\cup U \dot\cup W$.  A
  kernel $p$ over $\X_V$ indexed by $\X_W$ is said to be in the
  \emph{marginal model} for $\G$ if there exists a kernel $q$ that
  factorizes according to $\bar{\G}$, and
 \begin{align*}
 p(x_V \,|\, x_W) = \int_{\X_U} q(x_V, \, x_U \,|\, x_W) \, dx_U.
 \end{align*}
 That is, the margin of $q$ over $X_V$ is $p$.
    Denote the collection of such kernels by $\M(\G)$.
\end{dfn}

In other words, the marginal model is the collection of kernels
that could be constructed as the margin of a Bayesian network with
latent variables replacing the bidirected edges.  
If $\G$ is a DAG then the marginal model is just the
usual model defined by the factorization.

A latent variable model corresponding to a canonical DAG $\bar{\G}$
(i.e.\ possibly with parametric or distributional assumptions on the 
latent variables) always lies within the marginal model corresponding to 
the mDAG $\G$.  
This should not be taken as meaning that the marginal model supersedes 
all latent variable models, since sometimes the additional parametric
assumptions made in a latent variable model are crucial to their 
utility.  For example, representing hidden Markov models and 
phylogenetic tree models using an mDAG leads to a bidirected
hyper-edge containing all vertices; the marginal models are therefore 
saturated, and rather uninteresting from the perspective statistical inference. 

From the definitions above it may seem as though the set of marginal
DAG models that can be represented by mDAGs is restricted to cases
where the latent variables have no parents; in fact this does not
cause any loss of generality, since all marginal DAG models can
be represented in this way \citep[see][]{evans:mdag}.


\subsection{Districts and Sterile Vertices}

\begin{dfn}
  A collection of random vertices $C \subseteq V$ in an mDAG $\G$ is
  \emph{bidirected-connected} if for any distinct $v,w \in C$, there
  is a sequence of vertices $v = v_0, v_1, \ldots, v_k = w$ all in $C$
  such that, for each $i=1,\ldots,k$, the pair $\{v_{i-1}, v_{i}\} \in
  \mathcal{B}$.

  A \emph{district} of an mDAG is an inclusion maximal
  bidirected-connected set of random vertices.
\end{dfn}

More informally, a district is a maximal set of random vertices joined by the
red edges in an mDAG.  It is easy to see from the definition that
districts form a partition of the random vertices in an mDAG.  The
mDAG in Figure \ref{fig:verma:mdag}, for example, contains three
districts, $\{1\}$, $\{3\}$ and $\{2,4\}$.  Districts inspire a useful
reduction of mDAGs, via the following special subgraph.

\begin{dfn}
Let $\G$ be an mDAG containing random vertices $C \subseteq V$.  Then $\G[C]$ is the subgraph of $\G$ with
\begin{enumerate}[(i)]
\item random vertices $C$ and fixed vertices $\pa_\G(C) \setminus C$;
\item those directed edges $w \rightarrow v$ such that $v \in C$ (and
  $w \in \pa_\G(C)$);
\item the bidirected simplicial complex $\mathcal{B}_C \equiv \{B
  \cap C : B \in \mathcal{B}(\G)\}$.  
\end{enumerate}
\end{dfn}

\begin{figure}
 \begin{center}
 \begin{tikzpicture}
 [rv/.style={circle, draw, very thick, minimum size=6.5mm, inner sep=0.75mm}, node distance=20mm, >=stealth]
 \pgfsetarrows{latex-latex};
\begin{scope}
 \node[rv]  (1)              {$1$};
 \node[below of=1, yshift=10mm] {(a)};
\end{scope}
\begin{scope}[xshift=4cm]
 \node[rv, rectangle]  (2)              {$2$};
 \node[rv, right of=2] (3) {$3$};
 \draw[->, very thick, color=blue] (2) -- (3);
 \node[below of=2, xshift=10mm, yshift=10mm] {(b)};
\end{scope}
\begin{scope}[xshift=1cm, yshift=-3.5cm]
 \node[rv, rectangle]  (1)              {$1$};
 \node[rv, right of=1] (2) {$2$};
 \node[rv, right of=2, rectangle] (3) {$3$};
 \node[rv, right of=3] (4) {$4$};
 \draw[->, very thick, color=blue] (1) -- (2);
 \draw[->, very thick, color=blue] (3) -- (4);
 \draw[<->, very thick, color=red] (2.45) .. controls +(1,1) and +(-1,1) .. (4.135);
  \node[below of=2, xshift=10mm, yshift=10mm] {(c)};
\end{scope}
 \end{tikzpicture}
 \caption{Subgraphs corresponding to factorization of the graph in Figure \ref{fig:verma:mdag} into districts.  Parent nodes of the district are drawn as squares.}
 \label{fig:subs}
 \end{center}
\end{figure}
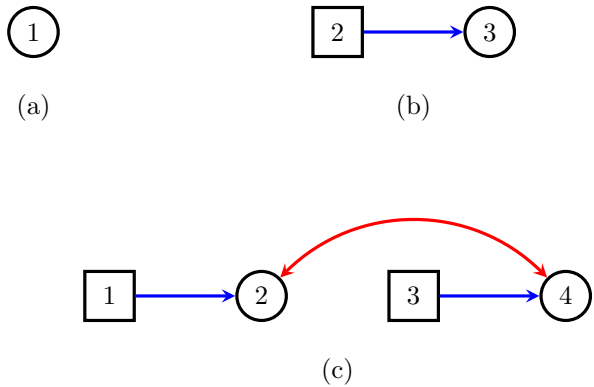

$\G[C]$ is therefore the subgraph induced over $C$, together with
parents of $C$ and edges directed towards $C$.  Any edges (whether
directed or bidirected) between the newly fixed vertices are removed.

For the graph in Figure \ref{fig:verma:mdag} the subgraphs
$\G[\{1\}]$, $\G[\{3\}]$ and $\G[\{2,4\}]$ are shown in Figures
\ref{fig:subs}(a), (b) and (c) respectively.  Note in particular that
the edge $2 \rightarrow 3$ is not in the subgraph $\G[\{2,4\}]$.

\begin{dfn}
  Let $\G$ be an mDAG with random vertices $V$.  For an arbitrary set
  $C \subseteq V$, define $\sterile_\G(C) \equiv C \setminus
  \pa_\G(C)$.  In words $\sterile_\G(C)$ is the subset of $C$ whose
  elements have no children in $C$.  We say a set $C$ is
  \emph{sterile} if $C = \sterile_\G(C)$.
\end{dfn}

\begin{prop} \label{prop:distfact} 
  Let $\G$ be an mDAG with districts $D_1, \ldots, D_k$.  A
  probability kernel $p$ is in the marginal 
  model for $\G$ if and only if
\begin{align*}
p(x_V \,|\, x_W) = \prod_{i=1}^k g_i(x_{D_i} \,|\, x_{\pa(D_i) \setminus D_i}),
\end{align*}
where each $g_i$ is a probability kernel in the marginal 
model for $\G[D_i]$.

In addition, $p$ is in the marginal model for $\G$ only if for every $v$ such
that $\ch_\G(v) = \emptyset$,
the marginal distribution
\begin{align*}
p(x_{V \setminus v} \,|\, x_W) = \sum_{x_v} p(x_V \,|\, x_W)
\end{align*}
is in the marginal model for $\G[V \setminus \{v\}]$.
\end{prop}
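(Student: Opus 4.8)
The plan is to prove both parts by direct manipulation of the factorization of the canonical DAG $\bar\G$, the one structural input being that every bidirected hyper-edge of $\G$ is contained in a single district: for $B\in\mathcal{B}(\G)$ and distinct $v,w\in B$ the length-one sequence $v,w$ already witnesses that $v,w$ are bidirected-connected, so $B\subseteq D_i$ for the unique district $D_i$ meeting $B$. Hence the latent vertices of $\bar\G$ partition as $U=U_1\dot\cup\cdots\dot\cup U_k$ with $U_i=\{u_B:B\in\mathcal{B}(\G),\,B\subseteq D_i\}$. A routine check from the definitions gives, for $v\in D_i$, that $\pa_{\bar\G}(v)=\pa_\G(v)\cup\{u_B:v\in B\}\subseteq D_i\cup(\pa_\G(D_i)\setminus D_i)\cup U_i$, that $\mathcal{B}(\G[D_i])=\{B\in\mathcal{B}(\G):B\subseteq D_i\}$, and that $\pa_{\overline{\G[D_i]}}(v)=\pa_{\bar\G}(v)$; consequently a density over $x_{D_i},x_{U_i}$ given $x_{\pa_\G(D_i)\setminus D_i}$ factorizes according to $\overline{\G[D_i]}$ precisely when it has the form $\prod_{v\in D_i}q(x_v\mid x_{\pa_{\bar\G}(v)})\cdot\prod_{u\in U_i}q(x_u)$.

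For the forward implication of the first claim, take $p\in\M(\G)$, so $p(x_V\mid x_W)=\int q(x_V,x_U\mid x_W)\,dx_U$ for some $\bar\G$-factorization $q=\prod_{v\in V}q(x_v\mid x_{\pa_{\bar\G}(v)})\cdot\prod_{u\in U}q(x_u)$. Grouping factors by district, $q=\prod_{i=1}^k q_i$ with $q_i:=\prod_{v\in D_i}q(x_v\mid x_{\pa_{\bar\G}(v)})\cdot\prod_{u\in U_i}q(x_u)$, and by the inclusion above $q_i$ is a function of $x_{D_i},x_{\pa_\G(D_i)\setminus D_i},x_{U_i}$ only; since the $U_i$ are disjoint the integral over $x_U$ distributes, giving $p=\prod_i g_i$ with $g_i:=\int q_i\,dx_{U_i}$. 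Summing $\prod_{v\in D_i}q(x_v\mid x_{\pa_{\bar\G}(v)})$ over $x_{D_i}$ in the reverse of a topological order of $D_i$ — at each step the eliminated vertex is a parent of no remaining vertex and of no latent, so its conditional sums to $1$ — yields $1$, whence $g_i$ is a conditional density, and $q_i$ exhibits it as the $D_i$-margin of an $\overline{\G[D_i]}$-factorization, so $g_i\in\M(\G[D_i])$. For the converse, given such $g_i$ choose for each $i$ an $\overline{\G[D_i]}$-factorization $q_i$ with $D_i$-margin $g_i$; as the $D_i$ (resp.\ $U_i$) are disjoint with union $V$ (resp.\ $U$), the product $q:=\prod_i q_i$ assigns each random vertex one conditional and each latent one marginal, so $q$ is a $\bar\G$-factorization whose $V$-margin is $\prod_i g_i=p$; thus $p\in\M(\G)$.

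For the second claim, fix $v\in\sterile_\G(V)$, so $v$ has no children in $\G$ and hence none in $\bar\G$. Let $q$ be any $\bar\G$-factorization with $V$-margin $p$. Since $v$ is a parent of nothing, the only factor containing $x_v$ is $q(x_v\mid x_{\pa_{\bar\G}(v)})$, which sums to $1$ over $x_v$, so $\sum_{x_v}q=\prod_{w\in V\setminus v}q(x_w\mid x_{\pa_{\bar\G}(w)})\cdot\prod_{u\in U}q(x_u)$. This is a $\overline{\G_{-v}}$-factorization apart from the bidirected edges $B=\{v,w_B\}$ of size two containing $v$: after deleting $v$ the associated latent $u_B$ has the single child $w_B$ and does not appear in $\G_{-v}$. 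Absorbing each such $u_B$ into its child — replacing $q(x_{w_B}\mid x_{\pa_{\bar\G}(w_B)})$ and $q(x_{u_B})$ by $\tilde q(x_{w_B}\mid x_{\pa_{\bar\G}(w_B)\setminus u_B}):=\sum_{x_{u_B}}q(x_{w_B}\mid x_{\pa_{\bar\G}(w_B)})q(x_{u_B})$ (note $\pa_{\bar\G}(w_B)\setminus u_B=\pa_{\overline{\G_{-v}}}(w_B)$), and then summing out these latents — turns $\sum_{x_v}q$ into a genuine $\overline{\G_{-v}}$-factorization, the $(V\setminus v)$-margin of which is $\sum_{x_v}p$. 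Hence $\sum_{x_v}p\in\M(\G_{-v})$.

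All the arithmetic is elementary; the only delicate part is the graph-theoretic bookkeeping, namely verifying that parent sets in $\G$, $\bar\G$, $\G[D_i]$ and $\overline{\G[D_i]}$ align as stated and that districts partition the random vertices, and, in the second claim, disposing of the size-two bidirected edges at $v$, which vanish in $\G_{-v}$ and must be re-absorbed into a conditional without altering the $(V\setminus v)$-margin.
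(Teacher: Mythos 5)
Your proof is correct and takes essentially the same route as the paper's (which is only a three-sentence sketch): factorize the canonical DAG, group factors by district using the observation that each bidirected hyper-edge lies within a single district so the latents partition accordingly, and for the second claim sum out the childless vertex whose conditional integrates to one. Your additional bookkeeping --- checking that $g_i$ normalizes, and re-absorbing the latents attached to size-two bidirected edges at $v$ that vanish in $\G_{-v}$ --- is a correct and worthwhile elaboration of details the paper leaves implicit.
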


\begin{proof}
  Consider the factorization of the canonical DAG $\bar{\G}$.  The
  first result follows from grouping the factors according to
  districts and noting that there is no overlap in the variables being
  integrated out.  The second result follows from noting that if $v$
  has no children, the variable $x_v$ only appears in a single factor,
  and that factor is a conditional distribution that integrates to 1.
\end{proof}

It follows from this result that to characterize the marginal model 
we need only consider mDAGs containing a single district, since other 
models can always be reduced to combinations of such graphs. 

\subsection{Relationship between mDAGs and ADMGs}

Previous papers considering marginal and nested models for DAGs have used
\emph{acyclic directed mixed graphs}, which are the restriction of
mDAGs with random vertices so that each bidirected edge has size two
\citep{richardson:03, shpitser:12, evans:14, richardson:17}.

 \begin{figure}
 \begin{center}
 \begin{tikzpicture}[rv/.style={circle, draw, very thick, minimum size=6.5mm, inner sep=1mm}, node distance=20mm, >=stealth]
 \pgfsetarrows{latex-latex};
\begin{scope}
 \node[circle, minimum size=2mm, inner sep=0mm, fill=red] (0) {};
 \node[rv] (1) at (90:1.3) {$1$};
 \node[rv] (2) at (210:1.3) {$2$};
 \node[rv] (3) at (330:1.3) {$3$};
 \draw[->, very thick, color=red] (0) -- (1);
 \draw[->, very thick, color=red] (0) -- (2);
 \draw[->, very thick, color=red] (0) -- (3);
 \node[below of=1, yshift=-10mm]  {(a)};
\end{scope}
\begin{scope}[xshift=5cm]
 \node[rv] (1) at (90:1.3) {$1$};
 \node[rv] (2) at (210:1.3) {$2$};
 \node[rv] (3) at (330:1.3) {$3$};
 \draw[<->, very thick, color=red] (2) -- (1);
 \draw[<->, very thick, color=red] (3) -- (1);
 \draw[<->, very thick, color=red] (2) -- (3);
 \node[below of=1, yshift=-10mm]  {(b)};
\end{scope}
 \end{tikzpicture}
 \caption{(a) An mDAG on three vertices representing a saturated
   model; (b) the bidirected 3-cycle, the simplest non-geared mDAG.}
 \label{fig:cycles}
 \end{center}
\end{figure}
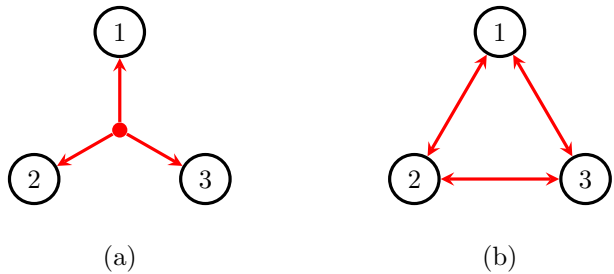

From the perspective of the nested Markov property this distinction is
unimportant: if we replace any bidirected simplicial complex with all
its subsets of size 2, we obtain a conditional ADMG that represents
the same model under the nested Markov property.  However, if we
consider the marginal model the models are not equal, as the restriction 
to pairwise independent latent parents will sometimes introduce 
additional inequality constraints.  The marginal model for the mDAG in
Figure \ref{fig:cycles}(b) is strictly smaller than the one for
\ref{fig:cycles}(a) \citep[][Proposition 2.13]{fritz:12}, for example.
See \citet{evans:mdag} for a more detailed discussion.

It follows from the results of this paper that there is no difference 
in equality constraints between graphs that differ only in this manner;
\emph{algebraically} the model
defined by having a single latent parent for several variables is the
same as having separate parents for each pair of vertices. The
mDAGs in Figure \ref{fig:cycles} both represent marginal models of full
dimension, for example.  
Hence, in terms of model dimension, nothing is lost by using ADMGs
instead of mDAGs.    

\section{Nested Markov Property} \label{sec:nested}

The nested Markov property imposes constraints 
on a joint distribution
that mimic those satisfied by the marginal model, including
conditional independences and the Verma constraint in Example
\ref{exm:one} \citep{richardson:17}.  It is defined in the following
recursive way, which is a modification of the algorithm of 
\citet{tian:02}.

\begin{dfn}[Nested Markov Property] \label{dfn:nmp}
  A kernel $p$ over $\X_V$ indexed by $\X_W$ 
  obeys the \emph{nested Markov property}
  for an mDAG $\G(V,W)$ if $V = \emptyset$, or both: 
\begin{enumerate}[1.]
\item $p$ factorizes over the districts $D_1, \ldots, D_l$ of $\G$:
\begin{align*}
p(x_V \,|\, x_W) = \prod_{i=1}^l g_i(x_{D_i} \,|\, x_{\pa(D_i) \setminus D_i})
\end{align*}
where each $g_i$ is a kernel 
which (if $l \geq 2$ or $W \setminus \pa_\G(V) \neq \emptyset$) obeys the nested Markov property
with respect to $\G[D_i]$; and
\item for each $v \in V$ such that $\ch_\G(v) = \emptyset$, the
  marginal kernel
\begin{align*}
p(x_{V \setminus v} \,|\, x_W) = \sum_{x_v} p(x_V \,|\, x_W)
\end{align*}
obeys the nested Markov property with respect to $\G[V\setminus \{v\}]$.
\end{enumerate} 

The set of kernels that obey the nested Markov property for $\G$
is the \emph{nested Markov model}, denoted by $\mathcal{N}(\G)$.
\end{dfn}

The condition that $l \geq 2$ or $W \setminus \pa_\G(V) \neq \emptyset$ in the first 
criterion of this definition is simply to prevent an infinite recursion 
of the definition: all the graphs invoked recursively have either 
fewer random vertices or fewer vertices overall 
than their predecessor in the recursion.  When we reach a graph
with a single random vertex $v$ such that all fixed vertices
are parents of $v$, then any kernel $p(x_v \mid x_{\pa(v)})$ satisfies
the nested Markov property.


\begin{exm} \label{exm:verma}
Consider again the mDAG in Figure \ref{fig:verma:mdag}.  
Applying criterion 1 to this graph implies that
\begin{align*}
p(x_1, x_2, x_3, x_4) = g_1(x_1) \cdot g_{24}(x_2, x_4 \,|\, x_1, x_3) \cdot g_3(x_3 \,|\, x_2)
\end{align*}
for some $g_1$, $g_3$ and $g_{24}$ obeying the nested Markov property
with respect to the mDAGs in Figures \ref{fig:subs}(a), (b) and (c)
respectively.  Applying the second criterion to $g_{24}$ and the now
childless vertex 2 (see Figure \ref{fig:subs}(c)) gives
\begin{align*}
\sum_{x_2} g_{24}(x_2, x_4 \,|\, x_1, x_3) = h(x_4 \,|\, x_3),
\end{align*}
for some function $h$ independent of $x_1$ (by a further application of the first criterion); 
this is precisely the Verma constraint. 

The marginal model implies additional conditions on joint
distributions because, although it satisfies the properties used to
define the nested model, these properties are not sufficient to
describe it.  In particular, for $p$ to be in the marginal model,
the kernel $g_{24}$ must satisfy Bell's inequalities 
\citep[see, for example,][Section 4.1]{steeg:11}.
\end{exm}

The nested Markov property is `sound' with respect to marginal models,
in the sense that all constraints represented by the former also hold
in the latter. 

\begin{thm}
For any mDAG $\G$ we have
$\M(\G) \subseteq \mathcal{N}(\G)$.
\end{thm}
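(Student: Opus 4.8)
The plan is to prove $\M(\G) \subseteq \mathcal{N}(\G)$ by induction on $|V|$, the number of random vertices, mirroring the recursive structure of Definition \ref{dfn:nmp}. The base case $V = \emptyset$ is immediate since both models are trivially the single empty distribution. For the inductive step, suppose $p(x_V \mid x_W) \in \M(\G)$, so there is a density $q(x_V, x_U \mid x_W)$ factorizing according to the canonical DAG $\bar\G$ with $p$ its $V$-margin. I must verify the two defining conditions of the nested Markov property.

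For condition 1 (district factorization), I would invoke the first part of Proposition \ref{prop:distfact}: since $p \in \M(\G)$, we have $p(x_V \mid x_W) = \prod_{i=1}^k g_i(x_{D_i} \mid x_{\pa(D_i)\setminus D_i})$ where each $g_i \in \M(\G[D_i])$. Now each subgraph $\G[D_i]$ has strictly fewer random vertices than $\G$ — \emph{unless} $\G$ already consists of a single district, in which case $\G[D_1] = \G$ itself and the induction does not obviously advance. This is the point requiring care: when $\G$ is a single district, condition 1 is vacuous (the product has one term equal to $p$ itself, recursing on $\G[V] = \G$), so the real content must come from condition 2 combined with a separate argument, and I would need to check that Definition \ref{dfn:nmp} is well-founded precisely because condition 2 strictly decreases $|V|$ whenever $\G$ has a childless vertex, and a nonempty DAG always has one. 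So the induction should really be set up on $|V|$ with the understanding that condition 1 is checked by applying the \emph{inductive hypothesis} to each $g_i \in \M(\G[D_i])$ only when $\G[D_i] \subsetneq \G$, and the single-district case is handled by observing that condition 1 is then automatically satisfied with $g_1 = p \in \M(\G[V])$ tautologically.

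For condition 2, let $v \in V$ with $\ch_\G(v) = \emptyset$. I would appeal to the second part of Proposition \ref{prop:distfact}: since $v$ has no children in $\G$, it is sterile, so $p(x_{V\setminus v} \mid x_W) = \sum_{x_v} p(x_V \mid x_W) \in \M(\G_{-v})$. Since $\G_{-v}$ has one fewer random vertex, the inductive hypothesis gives $p(x_{V\setminus v}\mid x_W) \in \mathcal{N}(\G_{-v})$, which is exactly what condition 2 demands. Combining: both conditions hold, so $p \in \mathcal{N}(\G)$.

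The main obstacle, as flagged above, is the bookkeeping around well-foundedness of the recursion: one must be careful that the recursive definition of $\mathcal{N}(\G)$ terminates, and that the inductive argument applies the hypothesis only to strictly smaller graphs. The cleanest route is probably to do a single induction on $|V|$ and observe that every step of the \emph{verification} — checking condition 1 via Proposition \ref{prop:distfact} and the IH on the $\G[D_i]$, checking condition 2 via Proposition \ref{prop:distfact} and the IH on $\G_{-v}$ — either reduces $|V|$ or is tautological (the single-district case of condition 1). Everything else is a direct translation between the two-part structure of Proposition \ref{prop:distfact} (which characterizes $\M$) and the two-part structure of Definition \ref{dfn:nmp} (which defines $\mathcal{N}$); the two are deliberately parallel, so no genuinely new computation is needed.
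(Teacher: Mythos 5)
Your proof is correct and is essentially the paper's own argument: the paper simply asserts that the result follows from Proposition \ref{prop:distfact}, whose two parts match the two clauses of Definition \ref{dfn:nmp}, and you have filled in the induction on $|V|$ that makes this precise. Your attention to the well-foundedness of the recursion in the single-district case (where clause 1 is tautological and clause 2, applied to a childless vertex, drives the induction) is a detail the paper leaves implicit, but it does not change the approach.
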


\begin{proof}
  This follows from the fact that the nested Markov model is defined
  in terms of constraints which are proven in Proposition
  \ref{prop:distfact} to be satisfied by the marginal model.
\end{proof}

\subsection{Parameterizing Sets}

\begin{dfn}
  Let $\G$ be an mDAG.  A subset of random vertices $S \subseteq V$ is called
  \emph{intrinsic} if $S$ is a district in any graph which can be
  obtained by iteratively applying graphical operations of the form
  1 and 2 in Definition \ref{dfn:nmp} (i.e.\ taking the graph $\G[D]$
  for a district $D$, or $\G[V \setminus \{v\}]$ for a sterile vertex $v$).

  Given an intrinsic set, $S$, define $H = \sterile_\G(S)$ to be the
  \emph{recursive head}, and $T = \pa_\G(S)$ the \emph{tail},
  associated with $S$ (note that $H$ and $T$ are disjoint).  The
  collection of all recursive heads in $\G$ is denoted by
  $\mathcal{H}(\G)$.  There is a one-to-one 
correspondence between intrinsic sets and recursive heads 
\citep{evans:param}.
  Throughout we will use $H$ and $T$ to indicate recursive heads
and tails respectively, with the context making it clear which
intrinsic set is being referred to.  

Define 
\begin{align*}
\mathcal{A}(\G) \equiv \{H \cup A \,|\, H \in \mathcal{H}(\G), A \subseteq T\}
\end{align*}
to be the \emph{parameterizing sets} of $\G$.  This collection of sets is 
so-called because it (locally) describes the set of distributions (or kernels) 
contained in the nested and marginal models, as we prove in Section \ref{sec:main}. 

Conversely, non-empty sets not in $\mathcal{A}(\G)$ are called the 
\emph{constrained sets}, and locally describe the set of constraints 
imposed by the nested and marginal models.
\end{dfn}

\begin{exm}
  The mDAG in Figure \ref{fig:verma:mdag} has districts $\{1\}$,
  $\{3\}$ and $\{2,4\}$, so these are all intrinsic sets.  Further, in
  the subgraph $\G[\{2,4\}]$ the vertices 2 and 4 have no children, so
  we can marginalize either to see that respectively $\{4\}$ and
  $\{2\}$ are intrinsic sets.  The corresponding recursive heads and
  tails are then:
\begin{center}
\begin{tabular}{c||c|c|l}
$S$  & $H$   & $T$   & $\mathcal{A}$\\[1pt]
\hline
\{1\}     & \{1\}     &$\emptyset$ & $\{1\}$\\[3pt]
\{2\}     & \{2\}     &\{1\} & \{2\}, \: \{1,2\}\\[3pt]
\{3\}     & \{3\}     &\{2\} & \{3\}, \: \{2,3\}\\[3pt]
\{4\}     & \{4\}     &\{3\} & \{4\}, \: \{3,4\}\\[3pt]
\{2,4\}   & \{2,4\}   & \{1,3\} &  \{2,4\}, \: \{1,2,4\}, \: \{2,3,4\}, \: \{1,2,3,4\}\\
\end{tabular}.
%
\end{center}

Note that every non-empty subset of $V$ is represented in $\mathcal{A}$ except for
$\{1,3\}$, $\{1,2,3\}$, $\{1,4\}$ and $\{1,3,4\}$.  These are the `constrained
sets'; the first two correspond to the conditional independence,
$X_1 \indep X_3 \mid X_2$ in (\ref{eqn:ci}),
and the others to the Verma constraint (\ref{eqn:vc}).
\end{exm}

Intrinsic sets and recursive heads consist only of random
vertices, while tails may include both random and fixed vertices.  

\begin{prop} \label{prop:sterile} Let $C$ be a bidirected-connected
  set in an mDAG $\G$; then there exists an intrinsic set $S$ such
  that $C \subseteq S$ and $\sterile_\G(S) \subseteq \sterile_\G(C)$.
\end{prop}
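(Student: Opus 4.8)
The plan is to build the intrinsic set $S$ by starting from $C$ and iteratively closing it under the two operations in Definition \ref{dfn:nmp} until it becomes a district in some reachable graph, while keeping careful track of which vertices remain sterile. The key observation is that $C$, being bidirected-connected, is contained in a single district $D$ of $\G$ itself; so we may as well work inside $\G[D]$, where $C$ lies in the (unique) district. The goal is then to repeatedly marginalize away vertices of $D \setminus C$ that have no children, so as to shrink the district down to one that still contains $C$ but whose sterile set is no larger than $\sterile_\G(C)$.

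First I would set up an induction on $|D \setminus C|$. If $D = C$, then $C$ is itself a district in $\G[D]$ — hence intrinsic — and we are done with $S = C$. Otherwise, I claim there is a vertex $v \in D \setminus C$ with $\ch_{\G[D]}(v) = \emptyset$: take a vertex of $D \setminus C$ that is maximal in the (acyclic) directed order among all of $D$; its only possible children lie in $C$, and I would argue they cannot — more carefully, one should choose $v$ to be a childless vertex of the induced subgraph on $D$ that is not in $C$, which exists because $D \setminus C$ is nonempty and the restriction of an acyclic order to $D$ has a sink; if that sink happens to lie in $C$ one instead needs to verify that marginalizing it is unnecessary, i.e.\ that we can always find a sink outside $C$ after possibly re-applying the district operation. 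The cleanest route is: among all vertices of $D$ with no children \emph{in $D$}, if all of them lie in $C$, then actually $D$ with those vertices removed still contains $C$... this needs the right bookkeeping. Let me instead phrase the induction so that at each step we either find a childless $v \in D \setminus C$ to marginalize (reducing $|D \setminus C|$), or we find that $C$ is already a district.

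The main technical point — and the step I expect to be the real obstacle — is tracking the sterile set across these operations. When we marginalize a childless $v \notin C$ from $\G[D]$, by Proposition \ref{prop:distfact}/Definition \ref{dfn:nmp} we pass to $\G[D]_{-v}$, and then re-decompose into districts; $C$ stays bidirected-connected, so it sits inside one new district $D'$ with $D' \subseteq D \setminus \{v\}$, and we recurse. I need: (a) each vertex of $C$ that was sterile in $\G$ (relative to $C$) stays "eligible" so that $\sterile_\G(S) \subseteq \sterile_\G(C)$ at the end — the point being that removing $v \notin C$ and removing edges to vertices outside $D'$ can only \emph{add} children-relationships within... no, it can only \emph{remove} them, potentially turning a non-sterile vertex of $C$ sterile; so the danger is $\sterile_\G(S)$ becoming \emph{too big}. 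The resolution: any $w \in C$ with a child $u \in C$ retains that child $u$ throughout (since $u \in C \subseteq S$ at every stage and the edge $w \to u$ is never deleted, because neither endpoint leaves), so $w \notin \sterile_\G(S)$; hence $\sterile_\G(S) \cap C \subseteq \sterile_\G(C)$, and combined with $\sterile_\G(S) \subseteq S$ together with $C \subseteq S$ one must also check vertices of $S \setminus C$ — but $S$ is the final district containing $C$, and any $s \in S \setminus C$ that survived all marginalizations did so because it had a child at the moment each marginalization was attempted; I would argue it ultimately has a child in $C$ (otherwise it would have been marginalized), so $s \notin \sterile_\G(S)$ either, giving $\sterile_\G(S) \subseteq C$ and then $\sterile_\G(S) \subseteq \sterile_\G(C)$. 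That last implication — that every surviving vertex outside $C$ ends up with a child in $C$ — is the crux, and I would prove it by choosing the marginalization order greedily (always marginalize a childless vertex outside $C$ if one exists), so that termination forces every remaining vertex of $S \setminus C$ to have a child in $S$, and then a short argument pushes that child into $C$. The $C \subseteq S$ half is immediate from the construction since no vertex of $C$ is ever removed.
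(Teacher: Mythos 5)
Your construction is essentially the paper's argument run forwards: the paper takes a \emph{minimal} intrinsic set $S \supseteq C$ (which exists because the district containing $C$ is intrinsic) and derives a contradiction from any $v \in \sterile_\G(S) \setminus \sterile_\G(C)$ by noting such a $v$ lies outside $C$, is childless in $\G[S]$, and can therefore be marginalized to produce a smaller intrinsic set containing $C$. Your greedy procedure (repeatedly marginalize a childless vertex of $D \setminus C$, re-take the district containing $C$, recurse) produces exactly such a set, and all the ingredients you use --- $C$ stays bidirected-connected so it stays inside one district, each reached set is intrinsic, a vertex of $C$ with a child in $C$ keeps it --- are the same ones the paper uses. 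So the approach is sound and not genuinely different.

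One correction, though: the step you single out as ``the crux'' --- that every surviving vertex of $S \setminus C$ has a child \emph{in $C$} --- is both false and unnecessary. It is false because a vertex $a \in S \setminus C$ can survive with its only child being another vertex $b \in S \setminus C$ (e.g.\ $a \to b \to c$ with $C = \{c\}$): $b$ is never childless, so $a$ is never childless, and neither is removed, yet $a$ has no child in $C$. It is unnecessary because $\sterile_\G(S) = S \setminus \pa_\G(S)$ only asks whether a vertex has a child in $S$, and your termination condition (no childless vertex of $S \setminus C$ remains in $\G[S]$) says precisely that every vertex of $S \setminus C$ has a child in $S$; hence $\sterile_\G(S) \subseteq C$ immediately. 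Combined with the trivial observation that a vertex of $C$ with no child in $S$ a fortiori has no child in $C$, this gives $\sterile_\G(S) \subseteq \sterile_\G(C)$ with no further argument. Similarly, your worry about topological sinks landing in $C$ is moot: you never need to shrink $S$ down to $C$, only to the point where no childless vertex outside $C$ remains.
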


\begin{proof}
  The district containing $C$ is intrinsic by definition, so there
  exists an intrinsic set containing $C$; let $S$ be a minimal
  intrinsic set (by inclusion) containing $C$.  By the definition of intrinsic
  sets $S$ is a district in some graph reached by iteratively applying the
  operations 1 and 2 to $\G$: applying operation 1 again gives the 
  graph $\G[S]$. 

  Suppose for contradiction that there exists $v \in \sterile_\G(S)
  \setminus \sterile_\G(C)$; then $v \notin C$, since otherwise some
  child of $v$ would be in $C$, and therefore in $S$.  In addition,
  $v$ is childless in the subgraph $\G[S]$, so we can remove $v$ under
  operation 2 of Definition \ref{dfn:nmp}.  In the resulting strictly
  smaller graph, $C$ is still contained within one district, say $S'$,
  since $C$ is bidirected-connected; in addition $S'$ is also
  intrinsic, so we have found a strictly smaller intrinsic set $S'
  \supseteq C$, and reached a contradiction.
\end{proof}

We use the $\triangle$ operator to denote the symmetric difference of
two sets: $A \triangle B \equiv (A \setminus B) \cup (B \setminus A)$. 
Given a collection of sets $A_i$, $i=1,\ldots,k$ indexed by a finite
set $I$, let
\begin{align*}
\bigtriangleup_{i=1}^k A_i \equiv A_1 \triangle A_2 \triangle \cdots \triangle A_k.
\end{align*}
denote the symmetric difference of all the $A_i$.  That is, it is the
set containing precisely those elements $a$ which appear in an odd
number of the sets $A_i$.

The following result gives a characterization of the parameterizing
sets in terms of symmetric differences which will be fundamental to
our proof of the main results in this paper.

\begin{lem} \label{lem:diffsets} 
  A set $A \in \mathcal{A}(\G)$ if and only if there exists a
  bidirected-connected set $C = \{v_1, \ldots, v_k\}$ in $\G$, and
  sets $A_i$, $i=1,\ldots,k$, satisfying
\begin{align*}
\{v_i\} \subseteq A_i \subseteq \{v_i\} \cup \pa_\G(v_i),
\end{align*}
such that
\begin{equation}
A = \bigtriangleup_{i=1}^k A_i = A_1 \triangle \cdots \triangle A_k.  \label{eqn:tri}
\end{equation}
\end{lem}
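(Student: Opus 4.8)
The plan is to prove the two directions of the equivalence separately, in both cases exploiting Proposition~\ref{prop:sterile} to pass between bidirected-connected sets and intrinsic sets, and a counting/parity argument to control which vertices survive in a symmetric difference. Throughout, the key structural fact to keep in mind is that for an intrinsic set $S$ with head $H = \sterile_\G(S)$ and tail $T = \pa_\G(S)$, a vertex $v \in S$ lies in $H$ exactly when it has no child in $S$, and the parents of $S$ that lie outside $S$ are precisely $T$.

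\emph{($\Leftarrow$): symmetric differences give parametrizable sets.}
Suppose $C = \{v_1,\dots,v_k\}$ is bidirected-connected and $A = \bigtriangleup_{i=1}^k A_i$ with $\{v_i\} \subseteq A_i \subseteq \{v_i\} \cup \pa_\G(v_i)$. By Proposition~\ref{prop:sterile} there is an intrinsic set $S \supseteq C$ with $\sterile_\G(S) \subseteq \sterile_\G(C)$; write $H = \sterile_\G(S)$, $T = \pa_\G(S)$. I would first argue that $A \subseteq H \cup T$: every element of $A$ lies in some $A_i \subseteq \{v_i\} \cup \pa_\G(v_i) \subseteq S \cup \pa_\G(S) = S \cup T$, so it suffices to show that no element of $S \setminus H$ appears in $A$. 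If $w \in S \setminus H$ then $w$ has a child in $S$, hence (since $C$ might be smaller than $S$) I need the slightly finer statement that $w \notin C$ and $w$ is not a parent of any $v_i \in C$ — this is where I have to be careful, and I would instead phrase the whole $(\Leftarrow)$ direction so as to land in $\mathcal{A}(\G)$ directly: $A \cap H$ must equal $H$ (each $h \in H$ appears in $A_i$ for exactly one $i$, namely $i$ with $v_i = h$, because $h$ being childless in $C$ means $h \notin \pa_\G(v_j)$ for $j \neq i$), and $A \setminus H \subseteq T$. Then $A = H \cup (A \setminus H)$ with $A \setminus H \subseteq T$, which is exactly the form $H \cup A'$ defining a member of $\mathcal{A}(\G)$.

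\emph{($\Rightarrow$): parametrizable sets arise as symmetric differences.}
Let $A = H \cup A'$ with $H \in \mathcal{H}(\G)$ a recursive head and $A' \subseteq T$, where $H$ and $T$ come from an intrinsic set $S$. By definition $H = \sterile_\G(S)$ is bidirected-connected (it sits inside the district $S$ and, being a head, is itself connected — if this needs justifying I would note that the marginalization operations preserve bidirected-connectedness of the head). Enumerate $H = \{v_1,\dots,v_k\}$. The idea is to choose $A_i$ with $\{v_i\}\subseteq A_i \subseteq \{v_i\}\cup\pa_\G(v_i)$ so that each element of $H$ is covered an odd number of times (once, via $A_i = \{v_i\} \cup (\text{stuff in } T)$ never reintroducing other head vertices — possible because $\pa_\G(v_i) \cap H = \emptyset$ as $H$ is sterile) and each desired tail element $t \in A'$ is covered an odd number of times while each $t \in T \setminus A'$ is covered an even number of times. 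Since $t \in T = \pa_\G(S)$, $t$ is a parent of \emph{some} vertex of $S$; the subtlety is that $t$ need not be a parent of a vertex in $H$. I would handle this by taking $C$ to be a bidirected-connected set slightly larger than $H$ — again invoking Proposition~\ref{prop:sterile} in the form that lets me enlarge to an intrinsic set and then noting every tail vertex is a parent of a suitable member — so that each $t \in T$ has at least one, and I can toggle its parity freely using an extra $A_i$ for the vertex it parents. Concretely: start with $A_i^{(0)} = \{v_i\}$ for each $v_i \in C$; this gives symmetric difference $C$. Then for each $t \in T$ with $t \in A'$ pick one $v_i \in C$ with $t \in \pa_\G(v_i)$ and flip $t$ into $A_i$; iterate. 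The resulting symmetric difference is $C \triangle A' = (C \setminus A') \cup A'$, and since $C \supseteq H$ and $C \subseteq S$ is sterile so $C = H$, we get exactly $H \cup A' = A$.

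\emph{Main obstacle.}
The crux is the bookkeeping in the $(\Rightarrow)$ direction: ensuring that the chosen $C$ (bidirected-connected, contained in the intrinsic set) is exactly the head $H$ — so that the "leftover" part of the symmetric difference is precisely $H$ and not some larger sterile set — while simultaneously guaranteeing every tail vertex $t \in T$ is a parent of at least one $v_i \in C$, so its parity can be adjusted independently. I expect this to require a careful appeal to the exact relationship $T = \pa_\G(S)$ together with sterility of $H$, possibly arguing that $\pa_\G(S) = \pa_\G(\sterile_\G(S))$ for intrinsic $S$ (a vertex inside $S$ that is a parent of something in $S$ is not in the tail; a vertex outside $S$ that parents $S$ must parent some sterile element of $S$, by following directed paths within $S$ to a childless vertex). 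Establishing this last identity cleanly is the real work; once it is in hand, the parity construction and the $(\Leftarrow)$ direction are routine.
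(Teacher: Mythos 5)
Your reverse direction (symmetric difference $\Rightarrow$ parametrizable set) is essentially the paper's argument and works, but the worry you raise there is a non-issue: since $T = \pa_\G(S)$, we have $S \setminus H = S \cap \pa_\G(S) \subseteq T$, hence $H \cup T = S \cup \pa_\G(S)$ and the containment $A \subseteq C \cup \pa_\G(C) \subseteq S \cup \pa_\G(S) = H \cup T$ is immediate; elements of $S \setminus H$ are tail elements and are perfectly allowed to appear in $A$. Combined with your (correct) observation that each $h \in H \subseteq \sterile_\G(C)$ lies in $A_i$ for exactly one $i$, this direction is done.

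The forward direction has a genuine gap. You take $C = H$ and then try to repair the two resulting problems, but the repairs fail. First, $H = \sterile_\G(S)$ need not be bidirected-connected: in the graph $1 \leftrightarrow 2 \leftrightarrow 3$ with the single directed edge $2 \rightarrow 1$, the intrinsic set $S = \{1,2,3\}$ has head $\{1,3\}$, which is not bidirected-connected because the definition requires the connecting path to stay inside the set and the only connector is $2$. Second, a tail vertex need not be a parent of any head vertex, and the identity $\pa_\G(S) = \pa_\G(\sterile_\G(S))$ you hope to prove is false: following a directed path inside $S$ from a child of $t$ down to a childless vertex only makes $t$ an \emph{ancestor} of a head vertex, not a parent (consider $t \rightarrow w \rightarrow h$ with $w,h \in S$). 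Your final bookkeeping is also internally inconsistent -- you enlarge $C$ beyond $H$ to reach all tail vertices, but then conclude ``$C \subseteq S$ is sterile so $C = H$,'' which holds only if you never enlarged. The clean fix, and what the paper does, is to take $C = S$, the entire intrinsic set: each $h \in H$ then lies in exactly one $A_i$ by sterility, so $H$ is always in the symmetric difference; each $v \in S \setminus H$ has a child in $S = C$, and each $t \in T \setminus S$ is by definition a parent of some $v_j \in S$, so the parity of every tail vertex can be toggled by including or excluding it from a single $A_j$. No claim about connectivity of $H$ or about $\pa_\G(H)$ is then needed.
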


\begin{proof}
  Suppose that $A \in \mathcal{A}(\G)$; then $H \subseteq A \subseteq
  H \cup T$ for some head-tail pair $(H,T)$, with associated intrinsic
  set $S$.  Then let $C=S$, since intrinsic sets are by definition
  bidirected-connected.  Now consider sets $A'$ of the form (\ref{eqn:tri}); 
	start with $A_i = \{v_i\}$, so that $A' = S$, and we will adjust the sets
	$A_i$ to obtain $A'=A$.  
	We always have that $A'$ contains $H$, because each $v_i \in H$ appears in 
	$A_i$ and, by sterility of heads, in no other set $A_j$. 
	Each vertex $t \in T$ is 
  (by definition) the parent of some vertex $v_{j(t)} \in S$, so we 
  can either include or exclude it from $A'$ (as required) just by replacing 
  $A_{j(t)} = \{v_{j(t)}\}$ by $\{v_{j(t)}, t\}$.  Hence we just do this
  to include vertices in $A \setminus S \subseteq T$ and exclude vertices in 
  $S \setminus A \subseteq T$. 

  Conversely, suppose that $A$ is of the form (\ref{eqn:tri}) for some
  bidirected-connected set $C$; let $S$ be an intrinsic set satisfying
  the conditions of Proposition \ref{prop:sterile}, and $(H,T)$ be its
  associated head-tail pair.  Then the head $H = \sterile_\G(S)
  \subseteq \sterile_\G(C)$.  Each $v_i \in H \subseteq C$ appears in
  $A$, since $v_i \in A_j$ if and only if $i=j$.  Also $A \subseteq C
  \cup \pa_\G(C) \subseteq S \cup \pa_\G(S) = H \cup T$, so $A \in
  \mathcal{A}(\G)$.
\end{proof}

\subsection{Parameterization of the nested model}

The nested Markov model can be parameterized with parameters indexed
by head-tail sets \citep{evans:param}, and the parameterization defines
a smooth bijection between an open subset of a real vector space (i.e.\ the
parameter space) and the model (the set of probability distributions).  
This has some nice consequences that 
we now state \citep[for proofs see][]{evans:param}.

In particular, for a fixed state-space 
$\X_{VW}$ the set $\mathcal{N}(\G)$ is 
   a smooth manifold within the strictly positive probability simplex, and 
  has dimension
\begin{align*}
d(\G, \X_{VW}) \equiv \sum_{H \in \mathcal{H}(\G)} |\X_T| \prod_{h \in H} (|{\X}_h|-1).
\end{align*}
In the all-binary case this reduces to 
\begin{align*}
d(\G, \X_{VW}) \equiv \sum_{H \in \mathcal{H}(\G)} 2^{|T|}.
\end{align*}
Our main result will show that $\M(\G)$ always has the same dimension as 
$\mathcal{N}(\G)$.  Indeed, the parameterization of $\mathcal{N}(\G)$
will in principle also serve as a parameterization of $\M(\G)$, except that one would 
also have to restrict the parameter space in order to enforce the inequality 
constraints; of course, this is currently impractical 
since the inequality constraints are not generally known.


\section{Geared mDAGs} \label{sec:geared}

In this section we introduce a special class of mDAGs which 
we term `geared'.  For marginal models relating to such
graphs, the state-space of the hidden vertices can be restricted
without loss of generality, making proofs considerably easier.  
In Section \ref{sec:main} we prove
our main result first for geared graphs, and then extend the 
result to the general case.

\begin{dfn}
  Let $\G$ be an mDAG with bidirected simplicial complex
  $\mathcal{B}$.  We say that $\G$ is \emph{geared} if the maximal
  elements of $\mathcal{B}$ satisfy the running intersection property.
  That is, there is an ordering of the edges $B_1, \ldots, B_k$ such
  that for each $j > 1$, there exists $s(j) < j$ with
\begin{align*}
B_j \cap \bigcup_{i < j} B_i = B_j \cap B_{s(j)}.
\end{align*}
In other words, the vertices that are contained in both $B_j$ 
and any previous edge are all contained within one such edge $B_{s(j)}$. 

A particular ordering of the elements of $\mathcal{B}$ which satisfies
running intersection is called a \emph{gearing} of $\G$.\footnote{The 
term `geared' is chosen because a collection of bidirected edges
which satisfies running intersection may appear rather like `cogs' in
a set of gears: see Figure \ref{fig:mdag}.  The definition is
equivalent to the requirement that the simplicial complex
$\mathcal{B}$ is vertex decomposable \citep{provan:80}, and is also
closely related to the notion of decomposability in an undirected or
directed graph.  Indeed the term `decomposable' is used by
\citet{fox:14} to describe the same idea.  We avoid using this
terminology because of its existing meaning in connection with
undirected and directed graphical models: for example, 
ordinary DAGs are trivially
geared, but they may or may not be decomposable in the original sense
\citep{lau:96}.}
\end{dfn}

\begin{exm}
  The simplest non-geared mDAG is the bidirected 3-cycle, depicted in
  Figure \ref{fig:cycles}(b); there is no way to order
  the bidirected edge sets $\{1,2\}$, $\{2,3\}$, $\{1,3\}$ in a way
  which satisfies the running intersection property, since whichever
  edge is placed last in the ordering shares a different vertex with
  each of the two other edges. 
\end{exm}

The following fact about geared subgraphs of mDAGs will allow us to 
generalize our later results to graphs which are not geared.

\begin{lem} \label{lem:gearedsub} 
  Let $\G$ be an mDAG with parameterizing sets $\mathcal{A}(\G)$.  For
  any $A \in \mathcal{A}(\G)$ there exists
  a geared mDAG $\G' \subseteq \G$, such that
  $A \in \mathcal{A}(\G')$.
\end{lem}

\begin{proof}
  By Lemma \ref{lem:diffsets}, $A$ is of the form (\ref{eqn:tri}) for
  some bidirected-connected set $C$.  Let $\G'$ have the same
  vertices (random and fixed) and directed edges as $\G$, but be
  such that the set $C$ is \emph{singly 
  connected} by bidirected edges (i.e.\ the edges are all of size 2 
  and removing any of them will cause $C$ to be disconnected) chosen 
  to be a subgraph of $\G$.  
  Then $\G'$ is geared by standard properties of trees 
  and running intersection, and using Lemma \ref{lem:diffsets} again we 
  have $A \in \mathcal{A}(\G')$. 
\end{proof}

\subsection{Functional Models} 

The key property of geared graphical models is that we can find a
finite discrete latent variable model that is the same (over the 
observed variables) as the marginal model; that is, if the 
latent variables have a sufficiently large state-space then they
do not impose additional restrictions on the observed distribution. 
This is achieved by letting each observed variable be a deterministic 
function of its latent and observed parents. 
We illustrate this with an example.

\begin{exm}
  Consider the mDAG in Figure \ref{fig:iv}(a) representing the
  \emph{instrumental variables} model, used to
  model non-compliance in clinical trials; here, for example, $X_1$ represents a
  randomized treatment, $X_2$ the treatment actually taken, and $X_3$
  a patient's outcome or response, such as survival.  Suppose that each of these quantities is binary, 
  taking values in $\{0,1\}$.  Conceptually, it can be useful to posit the 
  existence of two different \emph{potential outcomes}  $X_3(0), X_3(1)$ 
  for the survival response, one for each level of the treatment; $X_3(0)$ is the 
  patient's outcome given that they choose not to take the treatment (so that $X_2=0$) 
  and $X_3(1)$ is their outcome given that they do ($X_2=1$).  For example, if $X_3(0) = 0$ and 
  $X_3(1) = 1$ then the patient survives if they take the treatment but dies
  if they do not.  This pair of values is known as a patient's \emph{response type}.  
  Of course, we can only ever observe one of these outcomes in 
  a given patient, the one corresponding to the observed value of $X_2$.  
  
  Similarly, we can conceive of two 
  versions of the treatment $X_2(0), X_2(1)$ depending upon the assigned 
  value of $X_1$, this pair being called the patient's \emph{compliance type}.
  For example, $X_2(0) = X_2(1) = 0$ means that the patient will
  not take the treatment, regardless of whether or not they are assigned to 
  the treatment group.  These concepts have proved fruitful in causal inference,
  as they enable discussion of whether treatments have effects at
  the level of individual patients, rather than just over the entire population
  on average \citep{neyman:23, rubin:74, richardson:11}. 

	Now, since the latent variable (say $U$) with children $\{2,3\}$ can take any
  value, we can---without loss of generality---assume that it includes the
  pair $(X_3(0), X_3(1))$, or equivalently a function $f_3 : \X_2 \rightarrow \X_3$
  that determines, given the observed $X_2$, which value $X_3$ will take.
  In this case $X_3$ is still a measurable function of its parents $U$ and 
  $X_2$.  Similarly we can assume $U$ includes a function $f_2 : \X_1 \rightarrow \X_2$
  that determines $X_2$ given an observed $X_1$.

 \begin{figure}
 \begin{center}
 \begin{tikzpicture}[rv/.style={circle, draw, very thick, minimum size=6.5mm, inner sep=1mm}, node distance=20mm, >=stealth]
 \pgfsetarrows{latex-latex};
\begin{scope}
 \node[rv] (1) {$1$};
 \node[rv, right of=1] (2) {$2$};
 \node[rv, right of=2] (3) {$3$};
 \draw[->, very thick, color=blue] (1) -- (2);
 \draw[->, very thick, color=blue] (2) -- (3);
 \draw[<->, very thick, color=red] (2) .. controls +(45:1cm) and +(135:1cm) .. (3);
 \node[below of=2, yshift=10mm]  {(a)};
\end{scope}
\begin{scope}[xshift=7cm]
 \node[rv] (1) {$1$};
 \node[rv, right of=1] (2) {$2$};
 \node[rv, right of=2] (3) {$3$};
 \node[rv, ellipse, inner sep=0mm, color=red, above of=2, xshift=10mm, yshift=-10mm] (U1) {$f_2,f_3$};
 \draw[->, very thick, color=red] (U1) -- (2);
 \draw[->, very thick, color=red] (U1) -- (3);
 \draw[->, very thick, color=blue] (1) -- (2);
 \draw[->, very thick, color=blue] (2) -- (3);
 \node[below of=2, yshift=10mm]  {(b)};
\end{scope}
 \end{tikzpicture}
 \caption{(a) An mDAG representing the instrumental variables model; (b) a DAG with functional latent variables equivalent to the potential outcomes model of instrumental variables.}
 \label{fig:iv}
 \end{center}
 \end{figure}
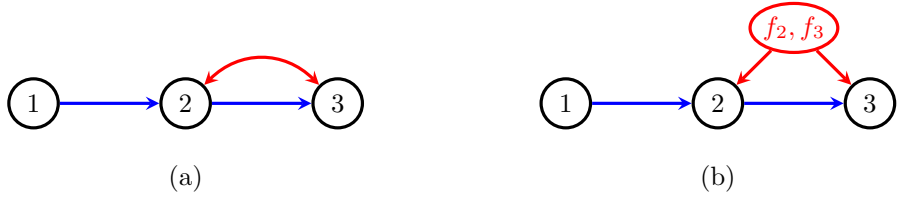

An observation for a particular patient can be obtained 
by drawing a random treatment assignment $X_1$, a 
random compliance type for the patient $f_2$, and a random 
response type $f_3$, and then evaluating 
$(X_1, X_2, X_3) = (X_1, f_2(X_1), f_3(f_2(X_1)))$.  
The key point is that one can place a distribution over 
$(X_1, f_2, f_3)$ and obtain a distribution over the 
observed variables $(X_1, X_2, X_3)$.  The only requirement
for the distribution to be Markov with respect to this 
particular graph is that $X_1 \indep \{f_2, f_3\}$,
as depicted in Figure \ref{fig:iv}(b).
\end{exm}

The functional construction outlined above is
mathematically equivalent to potential outcomes, and provides 
a model that is somewhat simpler to study than the general latent
variable model.  
In fact, any geared mDAG can be reduced to a latent variable model
in the way described above, something we now proceed to show.

\subsection{Remainder Sets}

Given a single-district, geared mDAG with at least one bidirected edge
and a gearing $B_1, \ldots, B_k$, define
\[
R_j \equiv B_j \setminus \bigcup_{i < j} B_i
\]
(taking $R_1 \equiv B_1$) to be the \emph{remainder set} associated with $B_j$. 
Remainder sets partition $V$, so
for a random vertex $v \in V$, define $r(v)$ to be the 
unique $j$ such that $v \in R_j$. 

Now say that an ordering $<$ on the vertices in $V$ \emph{respects the gearing} if for $v \in R_i$ and $w \in R_j$, we have $v < w$ whenever $i > j$; in other words, all the vertices in $R_k$ precede all those in $R_{k-1}$, etc; such an ordering always exists.  

For each $v \in V$ with $r(v) = j$, let
\begin{align*}
\pi(v) = \bigcup_{\substack{i > j \\ v \in B_i}} R_i;
\end{align*}
that is, the remainders associated with all bidirected edges which
contain $v$ and are later than $j$ in the ordering.  Then define a
collection of functions
\begin{align*}
\F_v \equiv \{f: \X_{\pa(v)} \times \F_{\pi(v)} \rightarrow \X_v\},
\end{align*}
where $\F_A = \times_{a \in A} \F_a$ and $\F_\emptyset = \X_\emptyset
= \{1\}$.  This is valid recursive definition, since all the vertices
in $\pi(v)$ precede $v$ in an ordering which respects the gearing.

\begin{exm} \label{exm:iv1}
The 
mDAG in Figure \ref{fig:iv}(a) has only one bidirected edge and therefore is trivially
geared with $R_1 = B_1 = \{2,3\}$.  This leads to the sets
\begin{align*}
\F_2 &= \{f_2 : \X_1 \rightarrow \X_2\}, &
\F_3 &= \{f_3 : \X_2 \rightarrow \X_3\},
\end{align*}
which are precisely the sets of functions for compliance type
and response type respectively.
\end{exm}

\begin{exm} \label{exm:gearing}
Consider the mDAG in Figure \ref{fig:mdag}, and order the bidirected edges as 
$B_1 = \{1,2\}$, $B_2 = \{2,3,4\}$ and $B_3 = \{3,4,5\}$,
giving respective remainder sets
$R_1 = \{1,2\}$, $R_2 = \{3,4\}$ and $R_3 = \{5\}$. 
The ordering $5 < 4 < 3 < 2 < 1$ of the random vertices respects the gearing, and we have
\begin{align*}
\pi(1) = \pi(5) = \emptyset, \qquad \pi(3) = \pi(4) = \{5\}, \qquad \pi(2) = \{3,4\}.
\end{align*}
In this case then
\begin{align*}
\F_5 &= \{f : \X_3 \rightarrow \X_5\} &
\F_4 &= \{f : \X_{2,3,6} \times \F_5 \rightarrow \X_4\}\\
\F_3 &= \{f : \X_1 \times \F_5 \rightarrow \X_3\} &
\F_2 &= \{f : \F_{3,4} \rightarrow \X_2\}\\
\F_1 &= \{f : \{1\} \rightarrow \X_1\} &&
\end{align*}
Alternatively, if we order the bidirected edges as $\{2,3,4\}$, $\{1,2\}$, $\{3,4,5\}$, then we could take $5 < 1 < 2 < 3 < 4$, and
\begin{align*}
\pi(1) = \pi(5) = \emptyset, \qquad \pi(3) = \pi(4) = \{5\}, \qquad \pi(2) = \{1\};
\end{align*}
this yields $\F_2 = \{f : \F_1 \rightarrow \X_2\}$, with other collections $\F_v$ remaining unchanged.  
\end{exm}

\subsection{Functional Models for Geared Graphs}

If a vertex $v$ is contained within exactly one bidirected edge, $B$,
then without loss of generality we can assume that the latent variable
corresponding to $B$ contains all the residual information about how
$X_v$ should behave given the values of its visible parents,
$X_{\pa(v)}$.  In other words, the latent variable associated with $B$
includes a (random) function $f_v : \X_{\pa(v)} \rightarrow \X_v$
which, once instantiated, `tells' $X_v = f_v(X_{\pa(v)})$ which value 
it should take for each value of its other parents, exactly as in 
Example \ref{exm:iv1}.\footnote{Equivalently, one could
take a deterministic function $f_v$ and introduce an `error term' 
$E_v$ so that $X_v = f_v(X_{\pa(v)}, E_v)$, as in the non-parametric
structural equation models of \citet{pearl:09}.}  All the randomness
of $X_v$ is collapsed into $f_v$ and $X_{\pa(v)}$.  

If $v$ is contained within two or more bidirected edges, say
$B_i$ and $B_j$, we might say that $B_i$ tells $X_v$ what value 
to take for every value of its visible parents and the other latent
variables.  However, it is not clear how to define such a function until
the state-space associated with the other latent parents (i.e. $B_j$) has
already been fixed.  The decomposable structure of geared graphs makes
it possible to iteratively fix state-spaces for latent variables
without loss of generality. 




To see this, 
suppose we have a single-district, geared mDAG $\G$ with remainder
sets $R_1, \ldots, R_k$, and form the canonical DAG $\bar{\G}$ by
replacing each bidirected edge $B_i$ in $\G$ with a new vertex $u_i$,
such that $\ch_{\bar{\G}}(u_i) = B_i$.  Compare, for example, the
structure of the graphs in Figures \ref{fig:mdag}(a) and (b).

Note that each vertex $v \in R_k$ has a single latent
parent $u_k$ in $\bar\G$.  Then, without loss of generality,
incorporate the function $f_v : \X_{\pa_\G(v)} \rightarrow \X_v$ into 
the latent variable $U_k$.  We `replace' $U_k$ with the 
collection of such functions $f_{R_k} \in \mathcal{F}_{R_k}$.  

Each vertex $v \in R_{k-1}$ has latent parent $u_{k-1}$ and possibly 
also $u_k$; but since the state-space of $U_k$ has been fixed 
as $\mathcal{F}_{R_k}$, we can define 
$f_v : \X_{\pa(v)} \times \mathcal{F}_{R_k} \rightarrow \X_v$ for 
those $v$ with latent parents $u_{k-1}$ and $u_k$, and just
$f_v : \X_{\pa(v)} \rightarrow \X_v$ otherwise.  These functions
$f_{R_{k-1}}$ can be integrated into $U_{k-1}$, and the process
repeated for $i=k-2,\ldots,1$.

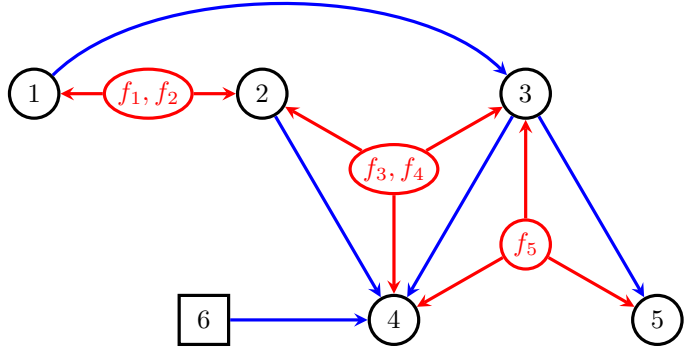
\begin{figure}
 \begin{center}
 \begin{tikzpicture}[rv/.style={circle, draw, very thick, minimum size=6.5mm, inner sep=1mm}, node distance=30mm, >=stealth]
 \pgfsetarrows{latex-latex};
 \node[rv, ellipse, inner sep=0mm, color=red] (U2) {$f_3,f_4$};
 \node[rv] (2) at (150:2) {$2$};
 \node[rv] (4) at (270:2) {$4$};
 \node[rv] (3) at (30:2) {$3$};
 \node[rv, left of=2] (1) {$1$};
 \node[rv, ellipse, inner sep=0mm, left of=2, xshift=15mm, color=red] (U1) {$f_1, f_2$};
 \node[rv, rectangle, left of=4, xshift=5mm] (5) {$6$};
 \node[rv, inner sep=0mm, color=red, yshift=-20mm] (U3) at (3) {$f_5$};
 \node[rv] (6) at ($(U3)+(330:2)$) {$5$};
 \draw[->, very thick, color=red] (U1) -- (1);
 \draw[->, very thick, color=red] (U1) -- (2);
 \draw[->, very thick, color=red] (U2) -- (2);
 \draw[->, very thick, color=red] (U2) -- (3);
 \draw[->, very thick, color=red] (U2) -- (4);
 \draw[->, very thick, color=red] (U3) -- (3);
 \draw[->, very thick, color=red] (U3) -- (4);
 \draw[->, very thick, color=red] (U3) -- (6);
 \draw[->, very thick, color=blue] (2) -- (4);
 \draw[->, very thick, color=blue] (5) -- (4);
 \draw[->, very thick, color=blue] (3) -- (4);
 \draw[->, very thick, color=blue] (3) -- (6);
 \draw[->, very thick, color=blue] (1) .. controls +(1.5,1.5) and +(-1.5,1.5) .. (3);
 \end{tikzpicture}
 \caption{A DAG with functional latent variables, associated with a gearing of the mDAG in Figure \ref{fig:mdag}(a).}
 \label{fig:canon}
 \end{center}
\end{figure}

\begin{figure}
 \begin{center}
 \begin{tikzpicture}[rv/.style={circle, draw, very thick, minimum size=6.5mm, inner sep=1mm}, node distance=30mm, >=stealth]
 \pgfsetarrows{latex-latex};
 \node[rv, ellipse, inner sep=0mm, color=red] (U2) {$f_3,f_4$};
 \node[rv] (2) at (150:2) {$2$};
 \node[rv] (4) at (270:2) {$4$};
 \node[rv] (3) at (30:2) {$3$};
 \node[rv, rectangle, left of=4, xshift=5mm] (5) {$6$};
 \node[rv, inner sep=0mm, color=red, yshift=-20mm] (U3) at (3) {$f_5$};
 \draw[->, very thick, color=red] (U2) -- (4);
 \draw[->, very thick, color=red] (U3) -- (4);
 \draw[->, very thick, color=blue] (2) -- (4);
 \draw[->, very thick, color=blue] (5) -- (4);
 \draw[->, very thick, color=blue] (3) -- (4);
 \end{tikzpicture}
 \caption{Subgraph of the DAG in Figure \ref{fig:canon} containing the vertex 4 and its parents.}
 \label{fig:func}
 \end{center}
\end{figure}
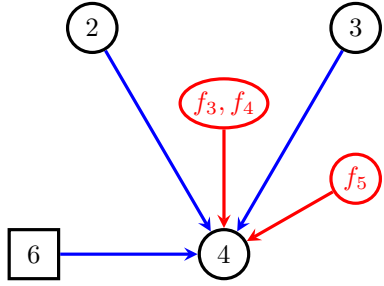

We end up with latent variables $U_i$ taking values in $\F_{R_i}$
for $i=1,\ldots,k$.
For example, with the first gearing given in Example \ref{exm:gearing} 
for the graph in Figure \ref{fig:mdag}(a), we would have
\begin{align*}
U_1 = (f_1, f_2), \qquad U_2 = (f_3, f_4), \qquad U_3 = (f_5).
\end{align*}
Associating each variable $U_i$ with the vertex $u_i$ leads to the DAG
in Figure \ref{fig:canon}. 
 Notice that, for each $v \in V$, the
function $f_v$ is contained within a parent variable of $v$.  In
addition, all the arguments of the function $f_v$ are also parents of
$v$. 
For example, take $v=4$, whose parents are drawn separately in Figure
\ref{fig:func}.  The function $f_4 \in \F_4$ is generated as part of the
latent variable $U_2 = (f_3, f_4)$, and the associated vertex $u_2$ is 
indeed a parent of $4$.  
In addition, $\F_4 = \{f : \X_{2,3,6} \times \F_5 \rightarrow
\X_4\}$, so the arguments of the function $f_4$, namely $X_2$, $X_3$,
$X_6$ and $f_5$, all correspond to vertices which are also parents of
$4$.  Thus, in
setting $X_4 = f_4(X_2, X_3, X_6, f_5)$ we ensure that $X_4$ is a well
defined function of its parent variables. 

In fact using this construction we can set
\begin{align*}
X_v = f_v(f_{\pi(v)}, X_{\pa(v)})
\end{align*}
for every $v \in V$, which is well defined because the directed part
of the original mDAG is acyclic.  The following result shows that the
resulting conditional distribution over $X_V$ given $X_W$ is in the
marginal model for the original mDAG.

\begin{thm} \label{thm:func} 
Let $\G$ be a geared mDAG, and $R_i, i=1,\ldots,k$ be the remainder sets
corresponding to some gearing of $\G$.  Suppose we generate
functions $f_v \in \F_v$ according to a distribution in which 
\begin{align*}
(f_v \,|\, v \in R_i) \indep (f_w \,|\, w \in V \setminus R_i),
\end{align*}
%
for each $i=1,\ldots,k$, and then define 
\begin{align*}
X_v = f_v(f_{\pi(v)}, X_{\pa(v)}), \qquad v \in V.
\end{align*}
Then the induced conditional distribution on $X_V$ given $X_W$
is in the marginal model for $\G$.
\end{thm}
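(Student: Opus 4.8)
The plan is to show that the functional construction described in the statement realizes a distribution in $\M(\G)$ by exhibiting it explicitly as a margin of a distribution that factorizes according to the canonical DAG $\bar\G$ — or, better, according to a DAG whose margin over $V$ is known to coincide with $\M(\G)$. The key observation, already noted in the text preceding the statement, is that in the DAG depicted in Figure~\ref{fig:canon} every function $f_v$ is a coordinate of some parent variable $U_{r(v)}$ of $v$, and every argument of $f_v$ — namely the observed parents $X_{\pa(v)}$ and the earlier functions $f_{\pi(v)}$ — is also a parent of $v$ in that DAG. So setting $X_v = f_v(f_{\pi(v)}, X_{\pa(v)})$ makes $X_v$ a deterministic (measurable) function of its parents in this DAG, which is exactly the structural equation property. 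Since we are in the discrete case, the structural equation property is equivalent to the factorization criterion (as recalled in Section~\ref{sec:dags}), so the joint law of $(X_V, U_1, \ldots, U_k)$ given $X_W$ factorizes according to this DAG, and its $V$-margin is in the corresponding marginal DAG model.

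The steps, in order, would be: (i) fix a gearing $B_1, \ldots, B_k$ with remainder sets $R_1, \ldots, R_k$ and an ordering $<$ of $V$ respecting it, and form the DAG $\G^*$ with random vertices $V$, fixed vertices $W$, latent random vertices $u_1, \ldots, u_k$ (carrying $U_i = (f_v : v \in R_i)$), directed edges inherited from $\bar\G$ between $V \cup W$, and edges $u_i \to v$ for $v \in B_i$; note $u_i$ has no parents, consistent with the conditional-DAG restriction. (ii) Check that $\G^*$ is acyclic: the only new edges point from the parentless $u_i$ into $V$, and the induced subgraph on $V \cup W$ is the directed part of $\G$, which is acyclic by hypothesis. (iii) Verify that the recursion $X_v = f_v(f_{\pi(v)}, X_{\pa(v)})$ is well-founded — this is immediate because every element of $\pi(v)$ lies in some $R_i$ with $i > r(v)$ and hence precedes $v$ in $<$ — and that the arguments $f_{\pi(v)}$ and $X_{\pa(v)}$ are all parents of $v$ in $\G^*$: the $X_{\pa(v)}$ are parents by construction, and each $f_w$ with $w \in \pi(v)$ is a coordinate of $U_i$ where $v, w$ share the bidirected edge $B_i$, so $u_i \to v$ is an edge of $\G^*$ and $f_w$ is carried by a parent of $v$. (iv) Conclude that each $X_v$ is a measurable function of $X_{\pa_{\G^*}(v)}$, that the $U_i$ are mutually independent (this is the independence hypothesis in the statement, rephrased as independence across the $u_i$), so the joint law obeys the structural equation / factorization criterion for $\G^*$, and therefore its $V$-margin lies in the $V$-marginal DAG model for $\G^*$. (v) Identify this model with $\M(\G)$: $\G^*$ is obtained from the canonical DAG $\bar\G$ only by repartitioning the latent variables (each $u_i$ in $\G^*$ replaces the $\bar\G$-latent for $B_i$, with identical children $B_i$), so $\G^*$ \emph{is} $\bar\G$ up to relabelling of latent state-spaces; a distribution expressible as a $V$-margin of a $\G^*$-factorizing law is a fortiori a $V$-margin of a $\bar\G$-factorizing law (take the latent to be $U_i$ with the appropriate induced law), hence in $\M(\G)$.

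The main obstacle is step (v) together with the bookkeeping in (iii): one must be careful that the edges $u_i \to v$ in $\G^*$ genuinely suffice to make \emph{every} argument of $f_v$ a parent, across \emph{all} bidirected edges containing $v$, not just the one $B_{r(v)}$ that contributes $v$ to its remainder set. Concretely, if $v \in B_i \cap B_j$ with $i < j$, then $v \in R_i$ is impossible unless $i$ is the first edge containing $v$; for the later edge $B_j$ the function $f_v$ is \emph{not} a coordinate of $U_j$, but the remainders $R_j \ni$ (some $w$) may appear in $\pi(v)$ only when the edge is \emph{later}, so one has to track directions carefully — the definition $\pi(v) = \bigcup_{i > r(v),\, v \in B_i} R_i$ is exactly engineered so that $f_w$ for $w \in \pi(v)$ is carried by $U_i = (f_{w'} : w' \in R_i)$ for an $i$ with $v \in B_i$, giving the needed edge $u_i \to v$. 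Verifying that this matching is exhaustive and consistent — i.e.\ that no argument of $f_v$ fails to be a parent of $v$ in $\G^*$, and that $\G^*$ really reproduces $\bar\G$'s bipartite latent-to-observed structure — is the delicate part; once it is in place, the appeal to the equivalence of the structural equation and factorization properties (valid since all observed and latent variables range over finite sets) finishes the argument routinely.
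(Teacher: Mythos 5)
Your proposal is correct and follows essentially the same route as the paper's proof: you form the canonical DAG with latent variables $U_i = (f_v : v \in R_i)$, observe that the stated independence makes the $U_i$ mutually independent exogenous variables, and verify that every argument of $f_v$ (namely $f_v$ itself, $f_{\pi(v)}$, and $X_{\pa(v)}$) is determined by a parent of $v$ in that DAG, so the structural equation property holds. The point you flag as delicate — that $w \in \pi(v)$ forces $f_w$ to be a coordinate of some $U_i$ with $v \in B_i$ — is exactly the step the paper handles via the definition of $\pi$ and running intersection, and your reading of it is right.
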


\begin{proof}
For each bidirected edge $B_i$, define the random variable $U_i = (f_v
\,|\, v \in R_i)$.  The $U_i$s are represented by exogenous variables
on the DAG $\bar{\G}$, and the conditions given in the statement of
the theorem ensures they are all independent.  The structural equation
property for $\bar{\G}$ will therefore be satisfied if each $X_v$ is a
well defined function of its parents in the graph.

In other words, the three components $f_v$, $f_{\pi(v)}$ and
$X_{\pa(v)}$ must all be determined from random variables which are
parents of $v$ in $\bar{\G}$.  This holds for $X_{\pa(v)}$ by
definition.  Additionally $v \in R_i$ implies that $v \in B_i$, and
that therefore the variable $U_i \equiv (f_v : v \in R_i)$ is a parent variable of $X_v$.

Lastly suppose $w \in \pi(v)$; this happens if and only if $w,v \in
B_j$ for some $j > i$, in which case $w \in R_j$ for the minimal such
$j$ by the running intersection property of the gearing.  Then $f_w$
is contained in $U_j$, which is also a parent variable of $X_v$.
%
%
\end{proof}


In fact it is not hard to see that \emph{any} distribution in the marginal
model of a geared graph can be generated in the way described
in Theorem \ref{thm:func}.  Since each of these latent variables takes
values in a finite collection of functions, this means that the marginal
model of a geared graph is equivalent to a latent variable model in 
which all the random variables (latent and observed) are finite and
discrete.  It follows from this that marginal models for geared 
mDAGs are semi-algebraic sets by the Tarski-Seidenberg theorem 
\citep[][Chapter 2]{basu:06}. 

\begin{exm}
Consider the marginal model for the graph in Figure \ref{fig:verma:mdag}.
In this case the vertices 2 and 4 are each contained in only one 
bidirected edge, so without loss of generality this edge could be 
replaced in the canonical DAG (Figure \ref{fig:dag}) with a latent
variable taking values in $\F_2 \times \F_4$ where
\begin{align*}
\F_2 \equiv \{f : \X_1 \rightarrow \X_2\}, && \F_4 \equiv \{f : \X_3 \rightarrow \X_4\}.
\end{align*}
That is, the latent variable may be assumed to be $U = (f_2, f_4)$,
where $f_2$ and $f_4$ respectively assign values to $X_2$ and $X_4$ given 
particular values of $X_1$ and $X_3$. 
\end{exm}

For non-geared graphs such as that in Figure \ref{fig:4cycle}(a), 
there is no clear way to write the marginal model as a latent
variable model without possible loss of generality.  
It is therefore not possible for us to prove that marginal 
models corresponding to non-geared mDAGs are semi-algebraic;
however, we conjecture that they indeed are. 
Our results
will show that for a sufficiently large latent state-space the
dimension of the latent variable model becomes the same as that
of the nested model, but it is at least conceivable that there 
are non-polynomial inequality constraints on the marginal model
for non-geared graphs. 

\subsection{Generating Distributions for Geared mDAGs} 

Let $\G$ be a single-district, geared mDAG, with gearing given by
remainder sets $R_1, \ldots, R_k$; assign a probability distribution
$\rho_i$ to each collection of functions $U_i \equiv (f_v \,|\, v \in
R_i)$.  Suppose we draw values for variables $U_i = (f_v)_{v \in R_i}$
independently according to $\rho_i$, and use them to generate values for 
the observed
variables $X_V$ for each possible value of the fixed vertices $X_W$.  
The resulting (conditional) distribution over $X_V$ given $X_W$
is, by Theorem \ref{thm:func}, in the marginal model for $\G$.  

Let 
$\pi(R_i) \equiv \bigcup_{v \in R_i} \pi(v)$ and $f_A \equiv (f_{v} \,|\, v \in A)$.  
Define
\begin{align}
\P[\rho_k, \ldots, \rho_1](x_V \,|\, x_W) = \sum_{f_{R_k} \in \Phi_k(x_{VW})} \!\!\! \rho_k(f_{R_k}) \cdots \!\!\! \sum_{f_{R_1} \in \Phi_1(f_{\pi(R_1)}, x_{VW})} \!\!\! \rho_1(f_{R_1}), \label{eqn:sumdist}
\end{align}
where 
\begin{align}
\Phi_i(f_{\pi(R_i)}, x_{VW}) = \{f_{R_i} \,|\, f_{v}(x_{\pa(v)}, f_{\pi(v)}) = x_{v} \text{ for each } v \in R_i\}; \label{eqn:Phi}
\end{align}
that is, $\Phi_i(f_{\pi(R_i)}, x_{VW})$ is precisely the set of functions $f_{R_i}$
that, given the indicated values of parents variables, jointly evaluate
to $x_{R_i}$.  Hence (\ref{eqn:sumdist}) is a sum over all the combinations of
functions $f_V$ that, given the input $X_W = x_W$, recursively
evaluate to $x_V$. 

The function $\P[\cdot]$ takes distributions over the functions $f_V$ and 
returns a kernel over $\X_V$ indexed by $\X_W$.
For brevity we will generally denote this by
\begin{align*}
\P[\rho_k, \ldots, \rho_1] = \sum_{\Phi_k} \rho_k \cdots  \sum_{\Phi_1} \rho_1,
\end{align*}
with the dependence upon $x_{VW}$ left implicit.  It may be helpful to
think of this as an over-parameterized family of kernels for $X_V$ given
$X_W$, with parameters $\rho_1, \ldots, \rho_k$.

The mapping $\P[\cdot]$ is clearly smooth (infinitely 
differentiable), and its image defines the marginal model.  Hence 
we will be able to deduce various aspects of the model's geometry 
by studying $\P[\cdot]$ and its derivatives. 
Choosing $\rho_i(f_{R_i}) = 1$ for each $i$ (up to a
constant of proportionality which, for simplicity, we do not write
explicitly) induces the uniform distribution on $\X_{V}$ for each 
$x_W \in \X_W$; we denote this kernel by $p_0 \equiv \P[1, \ldots, 1]$.
Clearly $p_0$ is contained within $\M(\G)$ for any mDAG $\G$---as, 
in fact, is any distribution corresponding to all variables being 
independent.

\begin{exm} \label{exm:iv2}
For the instrumental variables model in Figure \ref{fig:iv} (if we consider
$X_1$ to be fixed), we have
\begin{align*}
\P[\rho](x_2, x_3 \mid x_1) &= \sum_{\Phi(x_{123})} \rho(f_2, f_3)
\end{align*}
where 
\begin{align*}
\Phi(x_{123}) = \{(f_2,f_3) : f_2(x_1) = x_2, f_3(x_2) = x_3 \}.
\end{align*}
\end{exm}

\begin{exm}
In the case of the mDAG in Figure \ref{fig:mdag}(a) we have three bidirected edges and remainder sets, and the gearing used in Figure \ref{fig:canon} gives 
\begin{align*}
\P[\rho_3, \rho_2, \rho_1]
  &= \sum_{\Phi_3} \rho_3(f_5)  \sum_{\Phi_2} \rho_2(f_3, f_4) \sum_{\Phi_1} \rho_1(f_1, f_2),
\end{align*}
where
\begin{align*}
\Phi_1 &= \{(f_1,f_2) \mid f_1 = x_1,\, f_2(f_3,f_4) = x_2\}\\
\Phi_2 &= \{(f_3,f_4) \mid f_3(x_1) = x_3, \, f_4(x_2, x_3, x_6, f_5) = x_4\}\\
\Phi_3 &= \{f_5 \mid f_5(x_3) = x_5\}.
\end{align*}
\end{exm}



\section{Main Results} \label{sec:main}

In this section we provide our main results, showing that the marginal
model $\M(\G)$ has the same dimension as the nested model.
This is done first for geared mDAGs, and the result is then extended to
general graphs.  For geared graphs, the marginal model is just
the image of the infinitely differentiable function $\P[\cdot]$ described in 
the previous section.  Such functions can be locally approximated at a 
particular point, say $p_0 = \P[1,\ldots,1]$, by the linear map given by the 
derivative of $\P[\cdot]$. 

This column space of this linear map (also called the \emph{pushforward} map) 
gives the linear space that approximates the model at $p_0$, also known as 
the tangent space.  We will show that the tangent space to the marginal model
at $p_0$ is equal to the tangent space of the nested model 
$\mathcal{N}(\G)$ at $p_0$.  To do this we take a basis of 
the tangent space of $\mathcal{N}(\G)$, and for every vector $\lambda$
in the basis we explicitly construct a vector $\delta$ such that the 
directional derivative of $p[\cdot]$ with
respect to $\delta$ is equal to $\lambda$.  This shows that each $\lambda$
is also contained in the tangent space of $\M(\G)$.
Since the marginal model is contained within the 
nested model, it will then follow from results in algebraic geometry that the
two models coincide in a neighbourhood of $p_0$.

For non-geared graphs we have do slightly more work, showing that
we can combine 
maps from different geared sub-graphs to obtain the same result. 

\subsection{Vector Spaces and Tangent Cones}

A probability kernel $p(x_V \,|\, x_W)$ can be thought
of equally as a vector with entries indexed by $\X_{VW}$, or a real function with 
domain $\X_{VW}$.  
The following decomposition of the
vector space $\reals^{|\X_{V}|}$ will prove useful. 

\begin{dfn}
For any $A \subseteq V$, let $\Lambda_A$ be the subspace of $\reals^{|\X_V|}$
consisting of vectors $p$ such that 
\begin{enumerate}[(i)]
\item $\sum_{y_a \in \X_a} p(y_a, x_{V \setminus a}) = 0$ for each $a \in A$ and $x_{V \setminus \{a\}} \in \X_{V \setminus \{a\}}$;
\item $p(x_V) = p(y_V)$ whenever $x_A = y_A$.
\end{enumerate}
\end{dfn}

In other words, considered as a function $p : \X_V \rightarrow
\reals$, the value of $p \in \Lambda_A$ only depends upon $x_A$, and
its sum over $x_a$ for $a \in A$ (keeping the other arguments fixed)
is 0.  In particular $\Lambda_{\emptyset}$ is the subspace spanned by
the vector of 1s.  The dimension of $\Lambda_A$ is 
$\prod_{a \in A} (|\X_a| - 1)$; in the case where all the variables are 
binary, each $\Lambda_A$ has dimension one and is the same as the space 
spanned by the corresponding column of a log-linear design matrix.

It is simple to check that the spaces $\Lambda_A$ are all orthogonal, 
and the real vector space
$\reals^{|\X_V|}$ can be decomposed as the direct sum
\begin{align*}
\reals^{|\X_V|} = \bigoplus_{A \subseteq V} \Lambda_A.
\end{align*}

\begin{dfn}
  Let $\mathfrak{A}$ be a subset of $\reals^k$ containing a point
  $\bs x$.  The \emph{tangent cone} of $\mathfrak{A}$ at $\bs x$ is
  the set of vectors of the form
\begin{align*}
\bs v = \lim_{n \rightarrow \infty} \eta_n^{-1}(\bs v_n - \bs x)
\end{align*}
where $\eta_n \rightarrow 0$ and each $\bs v_n \in \mathfrak{A}$. 
\end{dfn}


A tangent cone is a cone, but may or may not be a vector space,
depending upon whether the set $\mathfrak{A}$ is regular at $\bs x$. 
If $\mathfrak{A}$ is defined by the image of a differentiable bijective 
map then the tangent cone is a vector space, and the same as the
image of the pushforward map.  This is the case with the nested model  $\mathcal{N}(\G)$,
which has an explicit and smooth parameterization \citep{evans:param}.  Its tangent cone 
at the uniform distribution $p_0$ is the vector space 
\begin{align}
\TS_0^n \equiv \bigoplus_{A  \in \mathcal{A}(\G)} \Lambda_A, \label{eqn:vecspace}
\end{align}
where $\mathcal{A}(\G)$ are the parameterizing sets;
this can be deduced by looking directly at the 
parameterization.  

As noted in Section \ref{sec:geared}, any marginal model $\M(\G)$ 
also contains the uniform distribution
\begin{align*}
p_0(x_V \mid x_W) &\equiv |\X_V|^{-1}, && x_V \in \X_V, x_W \in \X_W,
\end{align*}
at which point all variables are totally independent.  
The tangent cone of the marginal model $\M(\G)$ at 
$p_0$ is also the vector space (\ref{eqn:vecspace}), which
forms the main result of this section.

\begin{thm} \label{thm:main}
The tangent cone of $\M(\G)$ at $p_0$, denoted $\TC_0$, is the vector space
\begin{align*}
\TC_0 = \TS_0^n \equiv \bigoplus_{A \in \mathcal{A}} \Lambda_A.
\end{align*}
\end{thm}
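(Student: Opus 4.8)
The plan is to prove $\TC_0 = \bigoplus_{A \in \mathcal{A}(\G)} \Lambda_A$ by two inclusions, where the hard direction is showing that every $\Lambda_A$ with $A \in \mathcal{A}(\G)$ lies in the tangent cone. For the easy direction ($\subseteq$), I would invoke $\M(\G) \subseteq \mathcal{N}(\G)$ together with the claimed characterization of the tangent cone of $\mathcal{N}(\G)$ as $\bigoplus_{A \in \mathcal{A}(\G)} \Lambda_A$; since a tangent cone at a point is monotone under set inclusion, $\TC_0$ is contained in the nested model's tangent cone. (If one wants to avoid citing the nested tangent-cone claim, one can instead argue directly that the constraints defining $\mathcal{N}(\G)$ — independences and Verma-type constraints — force $\M(\G)$ to lie in the orthogonal complement of every $\Lambda_A$ with $A \notin \mathcal{A}(\G)$ near $p_0$, using Proposition \ref{prop:distfact}.)

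For the main inclusion, by Corollary \ref{cor:gearedsub} it suffices to handle geared mDAGs: given $A \in \mathcal{A}(\G)$, pass to a geared subgraph $\G' \subseteq \G$ with $A \in \mathcal{A}(\G')$, and note $\M(\G') \subseteq \M(\G)$ so $\TC_0(\G') \subseteq \TC_0(\G)$. For a geared graph, use the functional parameterization $\P[\rho_k, \ldots, \rho_1]$ from Theorem \ref{thm:func}: we perturb each $\rho_i$ away from the all-ones (uniform) weighting by $\rho_i = 1 + \eta \sigma_i + O(\eta^2)$ for suitable signed perturbations $\sigma_i$, and read off the first-order term of $\P[\rho_k,\ldots,\rho_1] - p_0$. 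By Lemma \ref{lem:diffsets}, $A = \bigtriangleup_{i=1}^k A_i$ for a bidirected-connected set $C = \{v_1,\ldots,v_k\}$ with $\{v_i\} \subseteq A_i \subseteq \{v_i\} \cup \pa_\G(v_i)$; the strategy is to choose the perturbation of $f_{v_i}$ to produce a $A_i$-degenerate contribution, and then show that composing these through the functional dependencies (the function $f_{v_i}$ takes $f_{\pi(v_i)}$ as an argument) combines the directions via symmetric difference, exactly as sketched in Section \ref{sec:outline}. This is presumably packaged as the cited Lemma \ref{lem:multiple}: manipulating the distribution of a downstream function $f_w$, which has $f_v$ as an argument, lets one perturb in direction $\{w\} \triangle A$ whenever one can already perturb in direction $A$ via $f_v$. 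Iterating along a gearing-respecting order and taking symmetric differences over the bidirected-connected set yields $\Lambda_A \subseteq \TC_0$.

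Finally, once every $\Lambda_A$ for $A \in \mathcal{A}(\G)$ is shown to lie in $\TC_0$, I would argue the sum is direct and that $\TC_0$ is genuinely a vector space (not merely a cone): the $\Lambda_A$ are mutually orthogonal by Proposition \ref{prop:decomp}, and each direction is obtained by a smooth curve $\eta \mapsto \P[\cdot]$ through $p_0$ that can be run with either sign of $\eta$, so the tangent cone is closed under negation and addition of its spanning directions. Combined with the reverse inclusion this gives $\TC_0 = \bigoplus_{A \in \mathcal{A}(\G)} \Lambda_A$. The main obstacle is the bookkeeping in the inductive step: verifying that a first-order perturbation of $\rho_i$ contributes precisely a vector in $\Lambda_{A_i}$ after the nonlinear averaging defining $\Phi_i$ in \eqref{eqn:Phi}, and that perturbations at different stages interact additively to first order and combine by symmetric difference rather than producing unwanted cross terms or directions outside $\mathcal{A}(\G)$. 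Controlling these cross terms — showing the $O(\eta^2)$ remainder really is negligible and that no spurious direction $\Lambda_B$ with $B \notin \mathcal{A}(\G)$ sneaks in — is where the real work lies, and is what Lemma \ref{lem:multiple} and the surrounding lemmas in Section \ref{sec:main} must carefully establish.
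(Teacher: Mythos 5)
Your outline matches the paper's strategy for the geared case (perturb the $\rho_i$ in the functional parameterization, use Lemma \ref{lem:diffsets} and Lemma \ref{lem:multiple} to realize each symmetric-difference direction, then close with $\M(\G)\subseteq\mathcal{N}(\G)$ and dimension counting), and that part is sound. But there is a genuine gap in how you pass from geared subgraphs back to a general $\G$. You obtain, for each $A\in\mathcal{A}(\G)$, a geared subgraph $\G'$ with $\Lambda_A\subseteq \TC_0(\G')\subseteq \TC_0(\G)$; this gives you the \emph{union} of the subspaces $\Lambda_A$ inside $\TC_0(\G)$, not their sum. Your closing claim that the tangent cone is ``closed under negation and addition of its spanning directions'' because each direction comes from a smooth curve is exactly the step that fails: a tangent cone of a non-smooth semi-algebraic set need not be convex or closed under addition, and the curves realizing different $\Lambda_A$'s live in \emph{different} latent-variable parameterizations attached to different geared subgraphs, so there is no single differentiable map whose derivative gives you the sum. (Within one geared graph the paper gets $T_1+\cdots+T_k\subseteq\TC_0$ precisely because all the $T_i$ arise from one differentiable family $\P[\rho_k,\ldots,\rho_1]$; that argument does not transfer across subgraphs.) Note also that the reverse inclusion plus dimension counting cannot rescue this: knowing $\bigcup_A\Lambda_A\subseteq\TC_0\subseteq\TS_0^n$ with $\dim\TS_0^n=\sum_A\dim\Lambda_A$ is consistent with $\TC_0$ being a proper cone strictly between the union and the direct sum.

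The paper closes this gap with Proposition \ref{prop:mix}, which is a nontrivial construction you would need to supply: given perturbed functional mechanisms for two geared subgraphs $\G_1,\G_2\subseteq\G$, one builds a single element of $\M(\G)$ by drawing independent Bernoulli switches $U_v$ and setting $Z_v$ to follow the $\G_1$-mechanism or the $\G_2$-mechanism according to $U_v$; expanding $P(Z_V=z_V)$ over the $2^{|V|}$ switch configurations and using the $\Phi_i^B$ bookkeeping from Lemma \ref{lem:single} shows the first-order term is $c_1''\lambda_{A_1}+c_2''\lambda_{A_2}$, so the sum of the two directions genuinely lies in $\TC_0(\G)$. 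Without this (or an equivalent argument), your proof establishes only that each $\Lambda_A$ individually lies in the tangent cone, which is strictly weaker than the theorem.
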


That $\TC_0 \subseteq \TS_0^n$ follows from the fact that
$\M(\G) \subseteq \mathcal{N}(\G)$.  
The proof of the reverse inclusion 
is delayed until the end of the section.


\subsection{Results for Geared Graphs}

\begin{dfn}
  Let $\lambda : \X_A \rightarrow \reals$; we say that $\lambda$ is
  $A$-\emph{degenerate} (or just degenerate) if for each $a \in A$,
  and $x_{A \setminus a} \in \X_{A \setminus a}$,
\begin{align*}
\sum_{y_a} \lambda(y_a, x_{A \setminus a}) = 0.
\end{align*}
\end{dfn}

It is not hard to see that the set of $A$-degenerate functions is isomorphic to
the vector space $\Lambda_A$; both formulations will be useful.

\begin{dfn}
Given a degenerate function $\vep_i : \F_{R_i} \rightarrow \reals$, define
\begin{align*}
D_i(\vep_i) = \lim_{\eta \downarrow 0} \eta^{-1} \left\{\P[1, \ldots, 1 + \eta\vep_i, \ldots, 1] - \P[1, \ldots, 1, \ldots, 1] \right\}, 
\end{align*}
so that $D_i(\vep_i)$ is a vector in $\reals^{|\X_{VW}|}$, the directional derivative
of the $i$th component of $\P[\cdot]$ with respect to $\vep_i$.  For
sufficiently small $\eta > 0$, the vector $1 + \eta \vep_i$ is non-negative and
therefore a valid distribution over $\F_{R_i}$ (up to the normalizing
constant); it follows that $D_i(\vep_i) \in \TC_0$, the tangent
cone of $\M(\G)$ at $p_0$.
\end{dfn}

Let 
$T_i = \{D_i(\vep_i) \,|\, \vep_i \text{ degenerate} \}$.
Since the function $\P[\cdot]$ is differentiable at $[1,\ldots,1]$ 
it follows that $T_i$ is a vector space, and also that the vector space 
$T_1 + \cdots + T_k$ 
is contained within the tangent cone of $\M$ at the uniform distribution. 
We will show that $T_1 + \cdots + T_k$ is in fact the same as 
(\ref{eqn:vecspace}). 

It will be useful to define the following collection of supersets of
$\Phi_i$, for $B \subseteq V$:
\begin{align}
\Phi_i^B(f_{\pi(R_i)}, x_{VW}) \equiv \{f_{R_i} \,|\, f_{v}(x_{\pa(v)}, f_{\pi(v)}) = x_{v} \text{ for each } v \in R_i \cap B\}. \label{eqn:Phi2}
\end{align}
That is, the collection of functions $f_{R_i}$ such that, given inputs
$f_{\pi(R_i)}$ and $x_{\pa(R_i) \setminus R_i}$, the values of 
$f_{B \cap R_i}$ jointly evaluate to $x_{B \cap R_i}$. 
Note that $\Phi_i^B = \Phi_i$ for any $B \supseteq R_i$.

\begin{lem} \label{lem:single} Let $C \subseteq R_i$, with
  $\sterile_\G(C) \subseteq A \subseteq C \cup \pa_\G(C)$ and $E
  \subseteq \pi(C)$.  Then for every degenerate function
\begin{align*}
\lambda : \X_A \times \F_E \rightarrow \reals,
\end{align*}
there exists a degenerate function $\delta : \F_C \rightarrow \reals$ such that
\begin{align*}
\sum_{f_{R_i} \in \Phi_i} \delta(f_{C}) = \lambda(x_A, f_E),
\end{align*}
where $\Phi_i$ is given by (\ref{eqn:Phi}).  In addition,
\begin{align*}
\sum_{f_{R_i} \in \Phi_i^B} \delta(f_{C}) = \left\{ 
\begin{array}{ll} |\X_{R_i \setminus B}| \lambda(x_A, f_E) & \text{if }C \subseteq B \\
0 & \text{otherwise}. \\\end{array} \right.
\end{align*}

\end{lem}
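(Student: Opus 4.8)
The plan is to build the function $\delta : \F_C \to \reals$ explicitly from $\lambda$, in a way that makes both summation identities transparent, and then to verify degeneracy of $\delta$. The key observation is that $\Phi_i$ sorts the functions $f_{R_i}$ by what value $x_{R_i}$ they produce (given fixed parent values), so summing a suitable product over $\Phi_i$ collapses to a factor depending only on $x_{R_i}$. Concretely, I would first note that for a vertex $v \in R_i$, fixing the required output value $f_v(x_{\pa(v)}, f_{\pi(v)}) = x_v$ constrains $f_v$ on exactly one point of its domain and leaves it free on the rest; so the set $\Phi_i$ factorizes across $v \in R_i$, and each ``free'' coordinate contributes a uniform multiplicity factor. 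The idea is to define $\delta(f_C)$ to depend on $f_C$ only through the ``relevant evaluations'' of those $f_v$, $v \in C$, whose output value at the parent arguments matters — i.e.\ through $x_A$ (the output values of the sterile part of $C$, which genuinely depend on $f_v$, together with the contribution of any parents of $C$ that lie in $A$) and through $f_E$ (for $E \subseteq \pi(C)$, these are coordinates among the arguments of functions in $C$). More precisely, I would set $\delta(f_C) = c \cdot \lambda(x_A(f_C), f_E) \cdot \Ind[\text{consistency of evaluations}]$ for an appropriate normalizing constant $c$ depending on domain sizes, where $x_A(f_C)$ denotes the tuple of values obtained by evaluating the functions $f_v$, $v \in C$, at the relevant parent arguments. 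The normalization $c$ is chosen so that, after summing over all $f_{R_i} \in \Phi_i$ (which fixes all evaluations of $f_v$, $v\in C\subseteq R_i$, hence fixes $x_A(f_C) = x_A$), we recover exactly $\lambda(x_A, f_E)$; this is where the factor $|\X_{R_i}|$-type counting enters and is absorbed into $c$.

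For the first identity, over $\Phi_i$ every $f_v$ with $v \in R_i$ has its relevant evaluation pinned down, so $x_A(f_C)$ is forced to equal $x_A$, and $\delta$ is constant on $\Phi_i$ up to the indicator; summing the constant $c\lambda(x_A, f_E)$ over the residual freedom gives a clean multiple of $\lambda(x_A, f_E)$, and $c$ is defined to make this multiple equal to $1$. For the second identity, over $\Phi_i^B$ only the $f_v$ with $v \in R_i \cap B$ have pinned evaluations. If $C \subseteq B$, then all the evaluations $\delta$ cares about are still pinned, so $x_A(f_C) = x_A$ as before, but there is extra summation freedom over the coordinates indexed by $R_i \setminus B$, each contributing a factor $|\X_{\,\cdot\,}|$; the product of these is exactly $|\X_{R_i \setminus B}|$, giving the stated answer $|\X_{R_i\setminus B}|\,\lambda(x_A, f_E)$. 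If $C \not\subseteq B$, pick $v \in C \setminus B$; then $f_v$ ranges over all its values at the relevant argument while the other coordinates of $x_A(f_C)$ stay fixed, so we are summing $\lambda$ over one free argument $y_a$ (with $a$ corresponding to $v$, noting $v \in \sterile_\G(C) \subseteq A$ or at least $v$ contributes a free coordinate of $x_A$), and $A$-degeneracy of $\lambda$ forces this to vanish. One subtlety: if $v \in C\setminus B$ but $v \notin \sterile_\G(C)$, so its output value is not directly a coordinate of $x_A$, I need to argue that varying $f_v$ then varies the evaluation of some descendant in $C$ that does sit in $A$, and again invoke degeneracy — this should follow because $v$ has a child in $C$ and the functional composition is nontrivial; more carefully, one re-indexes so that the outermost free evaluation is a sterile vertex.

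For degeneracy of $\delta$: I need $\sum_{y_a} \delta(y_a, f_{C\setminus\{a\}}) = 0$ for each coordinate $a$ of the domain $\F_C$ — but here ``coordinate'' means a pair (vertex $v\in C$, input tuple in $\X_{\pa(v)}\times\F_{\pi(v)}$). For those input tuples that are the ``relevant'' arguments, the sum over the output value $y_a$ is precisely a sum of $\lambda$ over one of its free arguments, which vanishes by $A$-degeneracy of $\lambda$ (or is a vacuous constraint if that argument is frozen by the $f_E$ or $x_A$ dependence — but then $\delta$ genuinely depends on it and the indicator/summation structure still gives zero). For ``irrelevant'' input tuples, $\delta$ does not depend on $f_v$ there at all, so the sum is $|\X_v|$ times a fixed value — wait, that is \emph{not} zero in general, which means I must instead define $\delta$ to be supported (via the consistency indicator, or by a further normalization on the irrelevant coordinates) so that degeneracy holds; the cleanest fix is to build $\delta$ as a tensor product of the ``active'' factor carrying $\lambda$ with degenerate factors on each irrelevant coordinate, or simply to note that $\Lambda$-type degeneracy only needs to hold after the relevant reductions and adjust the construction accordingly. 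I expect \textbf{this degeneracy bookkeeping — precisely matching the index set of ``coordinates of $\F_C$'' against the index set $A$ of ``coordinates of $\X_A$'', and ensuring the irrelevant directions do not spoil degeneracy — to be the main obstacle}; the two summation identities themselves are then straightforward multiplicity counts once $\delta$ is correctly set up.
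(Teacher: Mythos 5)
There is a genuine gap, and it sits exactly where you suspected trouble, but its root cause is different from the one you diagnose. Your candidate $\delta(f_C) = c\,\lambda(x_A(f_C), f_E)\cdot\Ind[\cdots]$ is not well defined as a function of $f_C$ alone: the ``relevant evaluation'' of $f_v$ is its value at the argument $(x_{\pa(v)}, f_{\pi(v)})$, and these inputs are (at least in part) external to $f_C$ --- they are precisely the quantities that vary in the outer sums $\sum_{\Phi_j}$, $j \neq i$, and in the indexing of the probability vector by $x_{VW}$. No consistency indicator can smuggle them into $\delta$. The paper's construction avoids this by summing over \emph{all} possible inputs: for $C = \{v\}$ it sets $\delta(f) = \sum_{y_L,\, g_{\pi(v)}} \lambda\bigl(f(y_L, g_{\pi(v)}), y_K\bigr)$, which manifestly depends only on $f$. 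The price is a large number of ``wrong-input'' cross terms, and the key point (Lemma \ref{lem:eval}) is that these vanish: for degenerate $\lambda$, summing $\lambda(f(x_2))$ over all $f$ constrained only at a \emph{different} point $x_1 \neq x_2$ is proportional to $\sum_{y}\lambda(y,\cdot) = 0$. Only the term where the dummy input equals the true input survives the restriction to $\Phi_i$, yielding $\lambda(x_v, x_K)$ up to a counting constant. This evaluation-averaging device is the idea your proposal is missing, and without it the first identity cannot be set up.

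Second, you have misread what degeneracy of $\delta : \F_C \rightarrow \reals$ requires. The coordinates of $\F_C = \times_{v \in C}\F_v$ are the vertices $v \in C$, so degeneracy means $\sum_{f_v \in \F_v}\delta(f_v, f_{C \setminus v}) = 0$ for each $v \in C$ --- not a condition for each (vertex, input-tuple) pair. With the paper's $\delta$ this follows in one line from degeneracy of $\lambda$ in the coordinate $x_v$, which holds because $v \in \sterile_\G(C) \subseteq A$. Your worry that ``irrelevant input tuples'' spoil degeneracy therefore dissolves, but so does your proposed fix. This per-vertex degeneracy is also exactly what powers the $C \not\subseteq B$ case of the second identity: if $v \in C \setminus B$ then $\Phi_i^B$ leaves $f_v$ completely free, and summing a $v$-degenerate $\delta$ over all of $\F_v$ gives $0$ directly, with no need for the descendant-chasing argument you sketch (which, as you note, does not obviously work when $v$ is not sterile). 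Finally, for $|C| > 1$ you still need the inductive step the paper carries out: split off a sterile vertex $w$, write $A = A_1 \triangle A_2$, and use the decomposition of a degenerate function into a finite sum of products of degenerate factors (Lemma \ref{lem:diff}) to reduce to the rank-one case; your closing remark about tensor products gestures at this but does not supply it.
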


\begin{proof} 
See appendix, Section \ref{sec:singleproof}.
\end{proof}

Note that if we set $E = \emptyset$, the above result shows that
for any $\lambda \in \Lambda_A$ there exists a $\delta$ such that
\begin{align*}
  & \eta^{-1} \left\{\P[1, \ldots, 1+\eta \delta,\ldots, 1] - \P[1, \ldots, 1,\ldots, 1]\right\}\\
  &= \eta^{-1} \left\{ \sum_{\Phi_k} \cdots \sum_{\Phi_i} \eta \delta(f_C) \sum_{\Phi_{i-1}} \cdots \sum_{\Phi_{1}} 1 \right\}\\
  &= \lambda.
\end{align*}
Hence $\Lambda_A \leq T_i$ (i.e.\ $\Lambda_A$ is a subspace of $T_i$)
for any $A$ such that $\sterile_\G(C) \subseteq A \subseteq C \cup
\pa_\G(C)$ and $C \subseteq R_i$.  

The next result forms the backbone for proving Theorem \ref{thm:main}:
it extends Lemma \ref{lem:single} to sets $C$ that are not contained
within a single remainder set.

\begin{lem} \label{lem:multiple} Let $C$ be a bidirected-connected
  set, and for each $i$ define $C_i \equiv C \cap R_i$; let
  $I \equiv \{i \,|\, C_i \neq \emptyset\}$.  For
  $\sterile_\G(C_i) \subseteq A_i \subseteq C_i \cup \pa_\G(C_i)$, let
\begin{align*}
A = \bigtriangleup_{i \in I} A_i.
\end{align*}
Then $\Lambda_A \leq T_l$, where $l$ is the minimal element of $I$. 
\end{lem}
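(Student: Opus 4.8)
The plan is to argue by induction on $|I|$, the number of remainder sets that $C$ meets. The base case $|I| = 1$ is exactly the remark following Lemma~\ref{lem:single}, applied with $E = \emptyset$, $i = l$ and $C = C_l = C$. For the inductive step I would invoke the rooted tree $\Pi_C$ of Lemma~\ref{lem:pickatree}; since every one of its edges $i \to j$ forces $j > i$, its root is $l = \min I$. Write $j_1, \ldots, j_m$ for the children of $l$ in $\Pi_C$ and let $I^{(t)}$ index the subtree rooted at $j_t$, so that $I \setminus \{l\} = I^{(1)} \,\dot\cup\, \cdots \,\dot\cup\, I^{(m)}$. To make the induction compose I would carry a statement slightly stronger than $\Lambda_A \leq T_l$: namely that there is a degenerate function on the functions attached to $\bigcup_{i \in I} C_i$ whose nested sum over the constraint sets $\Phi_i$ (with $i \in I$, taken in decreasing order) equals the prescribed $\lambda_A$, and which behaves in the ``all-or-nothing'' fashion of the $\Phi_i^B$ conclusion of Lemma~\ref{lem:single} under partial constraint sets. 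This is the multi-remainder analogue of the two conclusions of Lemma~\ref{lem:single}, and reduces to it when $|I| = 1$.

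Granting this, the inductive step runs as follows. Apply the strengthened inductive hypothesis to each subtree $I^{(t)}$ to obtain, for any prescribed degenerate $\lambda^{(t)}$ on $\X_{A^{(t)}}$ with $A^{(t)} \equiv \bigtriangleup_{i \in I^{(t)}} A_i$, a degenerate function $\delta^{(t)}$ realizing it. For each $t$ the tree edge $l \to j_t$ supplies an interface vertex $v_{j_t} \in C_{j_t}$ and a vertex $u_t \in C_l$ with $v_{j_t} \in \pi(u_t)$; set $E = \{v_{j_1}, \ldots, v_{j_m}\} \subseteq \pi(C_l)$ and apply Lemma~\ref{lem:single} at $i = l$ with $C = C_l$, the chosen $A_l$, this $E$, and a degenerate $\lambda$ on $\X_{A_l} \times \F_E$ whose dependence on each $f_{v_{j_t}}$ is precisely the one needed to feed $\delta^{(t)}$ into the argument $f_{\pi(u_t)}$ of $f_{u_t}$. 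This produces a degenerate $\delta$ on $\F_{C_l}$, and I would take $\vep_l(f_{R_l}) \equiv \delta(f_{C_l})$ as the perturbation of $\rho_l$.

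It then remains to evaluate $D_l(\vep_l)$ from the nested-sum formula for $\P[\,\cdot\,]$. The sums over remainders of index below $l$ contribute only an overall constant; the first conclusion of Lemma~\ref{lem:single} collapses the $\Phi_l$-sum to $\lambda(x_{A_l}, f_E)$, which still depends on the functions $f_{v_{j_t}}$ in the child remainders; and when these are summed out through $\Phi_{l+1}, \ldots, \Phi_k$, the tree structure of $\Pi_C$ forces each subtree $I^{(t)}$ to be summed as a block. Here the partial-constraint ($\Phi_i^B$) part of the strengthened hypothesis is essential: a child remainder contributes $0$ unless its whole vertex set is constrained, so the only surviving term is the fully consistent one, which reproduces $\lambda^{(t)}(x_{A^{(t)}})$. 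Collecting factors, $D_l(\vep_l)$ is proportional to $\lambda_{A_l}$ times the product of the $\lambda^{(t)}$, and tracking which observed coordinates appear --- an observed vertex lying in $\pa_\G(C_i)$ for one $i \in I$ and in $\sterile_\G(C_{i'})$ for another $i' \in I$ occurs in exactly two of the $A_i$ and hence cancels --- shows this is a vector in $\Lambda_A$ with $A = \bigtriangleup_{i \in I} A_i$; degeneracy is inherited from $\delta$ and the $\delta^{(t)}$. The main obstacle is exactly this last bookkeeping: checking that the running-intersection/tree structure really does make the nested sums collapse in the claimed all-or-nothing way, and that the parities of parent vertices shared between different $C_i$ assemble into the single symmetric difference $A$ rather than something larger. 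The $\Phi_i^B$ refinement of Lemma~\ref{lem:single} is precisely the device that controls this.
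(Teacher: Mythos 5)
Your proposal follows essentially the same route as the paper's proof: take the rooted tree of Lemma \ref{lem:pickatree} with root $l=\min I$, work upward from the leaves using Lemma \ref{lem:single} with a nonempty $E\subseteq\pi(C_i)$ so that each child's degenerate function is folded into its parent's target, and then let the tree structure collapse the nested sum $\sum_{\Phi_k}\cdots\sum_{\Phi_1}$ into a product $\prod_{i\in I}\lambda_i(x_{A_i})$; the symmetric-difference bookkeeping you identify as the main obstacle is exactly what Lemma \ref{lem:diff} supplies, combined with the fact that $T_l$ is a vector space, so it suffices to realize products of $A_i$-degenerate functions rather than an arbitrary prescribed element of $\Lambda_A$. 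Two corrections to your construction. First, your interface set $E=\{v_{j_1},\ldots,v_{j_m}\}$ is too small: the function $\delta^{(t)}$ produced by the inductive hypothesis depends on all of $f_{C_{j_t}}$, not just on $f_{v_{j_t}}$, so you need $E=\bigcup_t C_{j_t}$. This is legitimate because $v_{j_t}\in\pi(u_t)$ forces the whole remainder set $R_{j_t}\supseteq C_{j_t}$ to lie in $\pi(u_t)\subseteq\pi(C_l)$, since $\pi(u_t)$ is by definition a union of remainder sets. Second, the ``all-or-nothing'' $\Phi_i^B$ refinement is not needed in this lemma, and the reason you give for invoking it is off: in the expansion of $\P[1,\ldots,1+\eta\vep_l,\ldots,1]$ every constraint set appearing is the full $\Phi_i$, so partial constraints never arise here. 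The subtree sums factor simply because each $\sum_{\Phi_i}\delta_i(f_{C_i})$ depends only on $x_{A_i}$ and on $f_{C_j}$ for the children $j$ of $i$, and distinct subtrees involve disjoint collections of functions; the $\Phi_i^B$ conclusion of Lemma \ref{lem:single} earns its keep only later, in Proposition \ref{prop:mix}.
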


\begin{proof}
  By Lemma \ref{lem:pickatree} (see Appendix), there exists a rooted tree $\Pi$ with
  vertices $I$, such that $i \rightarrow j$ in $\Pi$ only if there
  exist $v_i \in R_i \cap C$ and $v_j \in R_j \cap C$ with $v_j \in
  \pi(v_i)$. 
  In particular $i \rightarrow j$ only if $C_j \subseteq \pi(v_i)$. 

  Let $l$ be the root node of $\Pi$, and for each $j \in \ch_{\Pi}(l)$
  denote by $\Pi_j$ the rooted tree with root $j$ formed only from the
  descendants of $j$.

  Let $\lambda_i : \X_{A_i} \rightarrow \reals$ be arbitrary
  $A_i$-degenerate functions for each $i \in I$.  Then starting with
  vertices which have no children (i.e.\ the leaves of the tree), and
  using Lemma \ref{lem:single}, recursively define $\delta_i$ for $i
  \in I$ as the degenerate function of $f_{C_i}$ such that
\begin{align*}
\sum_{\Phi_i} \delta_i(f_{C_i}) = \lambda_i(x_{A_i}) \prod_{j \in \ch_{\Pi}(i)} \delta_j(f_{C_j}),
\end{align*}
where the empty product is defined to be equal to 1.  Then
\begin{align*}
\sum_{\Phi_k} \cdots \sum_{\Phi_{l+1}} \sum_{\Phi_l} \delta_l(f_{C_l}) 
&= \lambda_l(x_{A_l}) \sum_{\Phi_k} \cdots \sum_{\Phi_{l+1}}  \prod_{j \in \ch_{\Pi}(l)} \delta_j(f_{C_j}).
\end{align*}
For each $i \in I$ an expression of the form $\sum_{\Phi_i}
\delta_i(f_{C_i})$ is only a function of 
$f_{C_\alpha}$ (and $x_{A_i}$) for $\alpha
\in \ch_\Pi(i)$ and $\Pi$ is a tree, so the sum factorizes
into components only involving the descendants of each $j \in
\ch_\Pi(l)$:
\begin{align}
\sum_{\Phi_k} \cdots \sum_{\Phi_1} \delta_l(f_{C_l}) 
&= \lambda_l(x_{A_l}) \prod_{j \in \ch_{\Pi}(l)} \sum_{\substack{\Phi_s \\ s \in \dec_{\Pi}(j)}}   \delta_j(f_{C_j}). \nonumber
\intertext{But then for each $j$ the factor represents a disjoint sub-tree $\Pi_j$ with root node $j$, so we can just iterate this process within each factor, and get}
& = \prod_{i \in I} \lambda_i(x_{A_i}). \label{eqn:prodlam}
\end{align}
It follows that any function of the form (\ref{eqn:prodlam})
lies in $T_l$.  Since $\Lambda_A$ is spanned by such functions by
Lemma \ref{lem:diff} (see Appendix \ref{sec:proofs}) it follows that
$\Lambda_A \leq T_l$.
\end{proof}

\begin{cor} \label{cor:tc}
For geared graphs $\G$, we have
\begin{align*}
\bigoplus_{A \in \mathcal{A}(\G)} \Lambda_A \leq T_1 + \cdots + T_k.
\end{align*}
\end{cor}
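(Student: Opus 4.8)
The plan is to deduce the statement from Lemma~\ref{lem:multiple} by an explicit regrouping of the symmetric-difference decomposition provided by Lemma~\ref{lem:diffsets}. Fix $A \in \mathcal{A}(\G)$. By Lemma~\ref{lem:diffsets} there is a bidirected-connected set $C = \{v_1, \ldots, v_m\}$ and sets $A_j$ with $\{v_j\} \subseteq A_j \subseteq \{v_j\} \cup \pa_\G(v_j)$ such that $A = \bigtriangleup_{j=1}^{m} A_j$. Since the remainder sets $R_1, \ldots, R_k$ partition $V$, each $v_j$ lies in exactly one $R_i$; writing $C_i = C \cap R_i$ and $I = \{i : C_i \neq \emptyset\}$, I would set
\begin{align*}
\widetilde A_i = \bigtriangleup_{v_j \in C_i} A_j, \qquad i \in I,
\end{align*}
so that, because the $C_i$ partition $C$, we have $\bigtriangleup_{i \in I} \widetilde A_i = \bigtriangleup_{j=1}^{m} A_j = A$.

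The next step is to verify that each $\widetilde A_i$ satisfies the hypothesis of Lemma~\ref{lem:multiple}, namely $\sterile_\G(C_i) \subseteq \widetilde A_i \subseteq C_i \cup \pa_\G(C_i)$. The upper bound is immediate since $\widetilde A_i \subseteq \bigcup_{v_j \in C_i} A_j \subseteq \bigcup_{v_j \in C_i}(\{v_j\} \cup \pa_\G(v_j)) = C_i \cup \pa_\G(C_i)$. For the lower bound, take $v_j \in \sterile_\G(C_i)$: then $v_j \in A_j$, and I claim $v_j$ lies in no other $A_{j'}$ with $v_{j'} \in C_i$, since $v_j \in A_{j'} \setminus \{v_{j'}\}$ would force $v_j \in \pa_\G(v_{j'})$, i.e.\ $v_j$ would have a child in $C_i$, contradicting sterility. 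Hence $v_j$ appears in exactly one of the sets combined to form $\widetilde A_i$, so $v_j \in \widetilde A_i$. With this in hand, Lemma~\ref{lem:multiple} applied to $C$ and the sets $\widetilde A_i$ yields $\Lambda_A \leq T_l$ for $l = \min I$, hence $\Lambda_A \leq T_1 + \cdots + T_k$.

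Finally, since this holds for every $A \in \mathcal{A}(\G)$ and the subspaces $\Lambda_A$ are pairwise orthogonal by Proposition~\ref{prop:decomp}, the span of their union --- which is precisely $\bigoplus_{A \in \mathcal{A}(\G)} \Lambda_A$ --- is contained in $T_1 + \cdots + T_k$. The one genuinely delicate point is the lower-bound verification above: one must see that a sterile vertex of $C_i$ cannot be cancelled out of $\widetilde A_i$ by appearing in the parent-part of some other $A_{j'}$, which is exactly what sterility forbids. The rest is bookkeeping with symmetric differences together with the fact that the remainder sets partition $V$.
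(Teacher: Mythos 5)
Your proof is correct and follows essentially the same route as the paper: decompose $A$ via Lemma~\ref{lem:diffsets}, regroup the sets $A_j$ by the remainder sets to form $\widetilde A_i$, check $\sterile_\G(C_i) \subseteq \widetilde A_i \subseteq C_i \cup \pa_\G(C_i)$, and invoke Lemma~\ref{lem:multiple}. The only cosmetic difference is that you verify the sterility lower bound directly (correctly, via the observation that a sterile vertex of $C_i$ cannot lie in the parent part of any other $A_{j'}$), whereas the paper obtains the same containment by applying Lemma~\ref{lem:diffsets} in reverse to each $C_i$.
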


\begin{proof}
  Reformulating Lemma \ref{lem:diffsets} slightly, for any $A \in
  \mathcal{A}(\G)$ there exists a bidirected-connected set $C =
  \bigcup_i C_i = \bigcup_i \{v_{i1}, \ldots, v_{ik_i}\}$, where $C_i
  = C \cap R_i$ (we have changed nothing other than to label the
  vertices $v_{ij}$ by which remainder set they are contained in).
Then $A$ is of the form
\begin{align*}
A = \bigtriangleup_{i, j} A_i^j = \bigtriangleup_{i} \left(\bigtriangleup_{j} A_i^j\right)
\end{align*}
for some sets $A_i^j$ such that $\{v_{ij}\} \subseteq A_i^j \subseteq
\{v_{ij}\} \cup \pa_\G(v_{ij})$.

Applying Lemma \ref{lem:diffsets} in reverse to the
bidirected-connected set $C_i$ shows that $A_i \equiv \bigtriangleup_j
A_i^j$ is in $\mathcal{A}(\G)$, and therefore satisfies
$\sterile_\G(C_i) \subseteq A_i \subseteq C_i \cup \pa_\G(C_i)$.  Then
by Lemma \ref{lem:multiple} the space $\Lambda_A$ is contained in some
$T_i$, $i=1,\ldots,k$.
\end{proof}

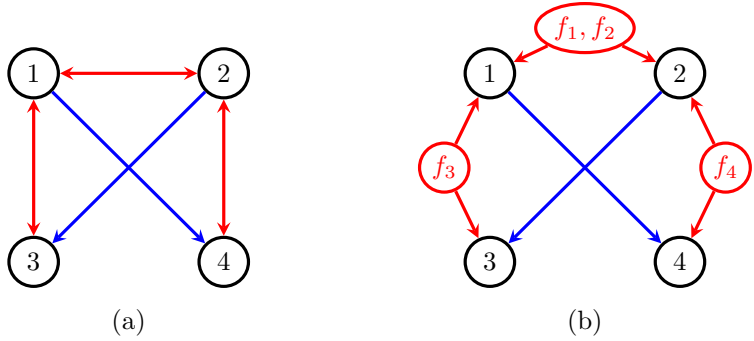
\begin{figure}
\begin{center}
\begin{tikzpicture}[rv/.style={circle, draw, very thick, minimum size=6.5mm, inner sep=0.5mm}, node distance=25mm, >=stealth] 
 \pgfsetarrows{latex-latex};
 \begin{scope}
 \node[rv] (1) {1};
 \node[rv, right of=1] (2) {2};
 \node[rv, below of=1] (3) {3};
 \node[rv, below of=2] (4) {4};
 \draw[->, very thick, color=blue] (1) -- (4);
 \draw[->, very thick, color=blue] (2) -- (3);
 \draw[<->, very thick, color=red] (1) -- (2);
 \draw[<->, very thick, color=red] (1) -- (3);
 \draw[<->, very thick, color=red] (4) -- (2);
 \node[xshift=12.5mm, yshift=-8mm] at (3) {(a)};
\end{scope}
\begin{scope}[xshift=6cm]
 \node[rv] (1) {1};
 \node[rv, right of=1] (2) {2};
 \node[rv, ellipse, xshift=12.5mm, yshift=6mm, color=red] (u1) at (1) {$f_1, f_2$};
 \node[rv, below of=1] (3) {3};
 \node[rv, yshift=-12.5mm, xshift=-6mm, color=red] (u2) at (1) {$f_3$};
 \node[rv, below of=2] (4) {4};
 \node[rv, yshift=-12.5mm, xshift=6mm, color=red] (u3) at (2) {$f_4$};
 \draw[->, very thick, color=blue] (1) -- (4);
 \draw[->, very thick, color=blue] (2) -- (3);
 \draw[->, very thick, color=red] (u1) -- (1);
 \draw[->, very thick, color=red] (u1) -- (2);
 \draw[->, very thick, color=red] (u2) -- (1);
 \draw[->, very thick, color=red] (u2) -- (3);
 \draw[->, very thick, color=red] (u3) -- (4);
 \draw[->, very thick, color=red] (u3) -- (2);
\node[xshift=12.5mm, yshift=-8mm] at (3) {(b)};
\end{scope}
\end{tikzpicture}
\end{center}
\caption{(a) an mDAG on 4 variables, and (b) a DAG with hidden variables corresponding to a gearing of the mDAG in (a).}
\label{fig:gadget}
\end{figure}

\begin{exm} \label{exm:iv3}
In the IV model from Figure \ref{fig:iv} (see Examples
\ref{exm:iv1} and \ref{exm:iv2}) has a saturated nested
model with the following parameterizing sets:
\begin{center}
\begin{tabular}{l | l | l}
head $H$   & tail $T$   & parametrizing sets $\mathcal{A}$\\[1pt]
\hline
\{1\}		&$\emptyset$	&\{1\}\\[1pt]
\{2\}		&$\{1\}$			&\{2\}, \{1,2\}\\[1pt]
\{3\}		&$\{1,2\}$		&\{3\}, \{1,3\}, \{2,3\}, \{1,2,3\}\\[1pt]
\end{tabular}.
\end{center}

Indeed, taking the functional parameterization suggested in 
Example \ref{exm:iv2}, one can see that altering the distribution
of the compliance functions $f_2$ will affect the distribution of
$X_2$ conditional on $X_1$, which is why $\Lambda_2$ and 
$\Lambda_{12}$ are contained in $\TC_0$.  
For example, to introduce a correlation between 
$X_1$ and $X_2$ whilst keeping the marginal distributions 
fixed, we can increase the proportion of `compliers'
(that is the people for whom $f_2(0) = 0, f_2(1) = 1$) and 
decrease the proportion of `defiers' ($f_2(0) = 1, f_2(1) = 0$).

Similarly, modifying the distribution of $f_3$ gives us 
$\Lambda_3$ and $\Lambda_{23}$.
Obtaining the directional derivatives in $\Lambda_{13}$ and 
$\Lambda_{123}$ requires modifying the distribution of $f_2,f_3$ 
jointly.  
\end{exm}

\begin{exm} \label{exm:gadget}
  Consider the single-district, geared mDAG in Figure
  \ref{fig:gadget}(a); the nested Markov model for this graph is
  saturated (i.e.\ has no constraints), 
  and thus its tangent space at $p_0$ is of full dimension.
   Correspondingly, one can check that 
   $\mathcal{A}(\G)$ consists of all non-empty
  subsets of $\{1,2,3,4\}$:


\begin{center}
\begin{tabular}{l | l | l}
head $H$   & tail $T$   & parametrizing sets $\mathcal{A}$\\[1pt]
\hline
\{1\}		&$\emptyset$	&\{1\}\\[1pt]
\{2\}		&$\emptyset$	&\{2\}\\[1pt]
\{1,2\}	&$\emptyset$	&\{1,2\}\\[1pt]
\{3\}		&$\{2\}$			&\{3\}, \{2,3\}\\[1pt]
\{1,3\}	&$\{2\}$			&\{1,3\}, \{1,2,3\}\\[1pt]
\{4\}		&$\{1\}$			&\{4\}, \{1,4\}\\[1pt]
\{2,4\}	&$\{1\}$			&\{2,4\}, \{1,2,4\}\\[1pt]
\{3,4\}	&\{1,2\}			&\{3,4\}, \{1,3,4\}, \{2,3,4\}, \{1,2,3,4\}
\end{tabular}.
%
\end{center}

Let us now see how our previous results apply to 
the marginal model in this case.  Consider the gearing 
\begin{align*}
B_1 &= \{1,2\} & B_2 &= \{1,3\} & B_3 &= \{2,4\}\\
R_1 &= \{1,2\} & R_2 &= \{3\} & R_3 &= \{4\}
\end{align*}
and ordering $4 < 3 < 1 < 2$ which respects this gearing.  This 
leads to the hidden variable model in Figure \ref{fig:gadget}(b); here
\begin{align*}
f_3 &: \X_2 \rightarrow \X_3 & f_4 &: \X_1 \rightarrow \X_4\\
f_1 &: \F_3 \rightarrow \X_1 & f_2 &: \F_4 \rightarrow \X_2.
\end{align*}
Applying Lemma \ref{lem:single} to each remainder set in turn tells us that 
\begin{align*}
\Lambda_1 + \Lambda_2 + \Lambda_{12} &\leq T_1,&
\Lambda_3 + \Lambda_{23} &\leq T_2, & 
\Lambda_4 + \Lambda_{14} &\leq T_3.
\end{align*}
%
We can apply Lemma \ref{lem:multiple} with the connected set 
$C = \{1,2,3,4\}$ to find that $\Lambda_A \leq T_1$, where $A$ 
is a set of the form $A = \{1,2\} \triangle A_2 \triangle A_3$ and 
\begin{align*}
\{3\} \subseteq A_2 \subseteq \{2,3\}, \qquad\qquad 
\{4\} \subseteq A_3 \subseteq \{1,4\};
\end{align*}
this gives us any $A \in \{ \{3,4\}, \{1,3,4\}, \{2,3,4\}, \{1,2,3,4\}\}$, and so
\[
\Lambda_{34} + \Lambda_{134} + \Lambda_{234} + \Lambda_{1234} \leq T_1.
\] 
Repeating with $C = \{1,2,3\}$ and $\{1,2,4\}$ respectively gives
$\Lambda_{13} + \Lambda_{123} \leq T_1$ and $\Lambda_{24} +
\Lambda_{124} \leq T_1$.

Thus for every non-empty $A \subseteq \{1,2,3,4\}$ there is some
$i\in\{1,2,3\}$ such that $\Lambda_A \leq T_i$, and therefore the
tangent cone of $\M(\G)$ at the uniform distribution is the
same as that of the saturated model on four variables.  In other words
the nested model and marginal model are both of full dimension.

\citet{evans:12} shows that the marginal model associated with
this graph induces some inequality constraints on the joint
distribution, and so the nested and marginal models are not
identical.
\end{exm}

\subsection{Extension to non-geared graphs}

Corollary \ref{cor:tc} put us in a position to prove Theorem
\ref{thm:main} for geared graphs; however it does not so far extend to
the general case, because we cannot fix the state-spaces of the latent
variables without a gearing.  In this section we will show that the
tangent cone of a general marginal model at the uniform
distribution is the vector space spanned by the tangent cones of its
geared subgraphs, and that therefore the problem can be reduced to
geared graphs.

\begin{figure}
 \begin{center}
 \begin{tikzpicture}[rv/.style={circle, draw, very thick, minimum size=6.5mm, inner sep=1mm}, node distance=20mm, >=stealth]
 \pgfsetarrows{latex-latex};
\begin{scope}
 \node[rv] (1) {$1$};
 \node[rv, right of=1] (2) {$2$};
 \node[rv, below of=2] (3) {$3$};
 \node[rv, below of=1] (4) {$4$};
 \draw[<->, very thick, color=red] (2) -- (1);
 \draw[<->, very thick, color=red] (3) -- (4);
 \draw[<->, very thick, color=red] (2) -- (3);
 \draw[<->, very thick, color=red] (4) -- (1);
 \node[below of=4, yshift=10mm, xshift=10mm]  {(a)};
\end{scope}
\begin{scope}[xshift=4cm]
 \node[rv] (1) {$1$};
 \node[rv, right of=1] (2) {$2$};
 \node[rv, below of=2] (3) {$3$};
 \node[rv, below of=1] (4) {$4$};
 \draw[<->, very thick, color=red] (2) -- (1);
 \draw[<->, very thick, color=red] (3) -- (4);
 \draw[<->, very thick, color=red] (2) -- (3);
 \node[below of=4, yshift=10mm, xshift=10mm]  {(b)};
\end{scope}
\begin{scope}[xshift=8cm]
 \node[rv] (1) {$1$};
 \node[rv, right of=1] (2) {$2$};
 \node[rv, below of=2] (3) {$3$};
 \node[rv, below of=1] (4) {$4$};
 \draw[<->, very thick, color=red] (2) -- (1);
 \draw[<->, very thick, color=red] (3) -- (4);
 \draw[<->, very thick, color=red] (4) -- (1);
 \node[below of=4, yshift=10mm, xshift=10mm]  {(c)};
\end{scope}
 \end{tikzpicture}
 \caption{(a) the bidirected 4-cycle, and (b), (c) two geared subgraphs.}
 \label{fig:4cycle}
 \end{center}
 \end{figure}
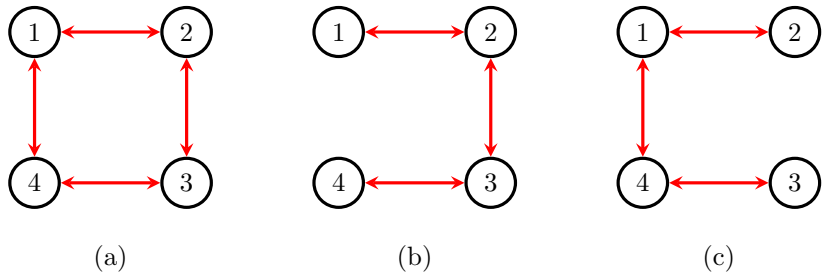

\begin{prop} \label{prop:mix} 
Let $\G$ be an arbitrary mDAG containing geared 
subgraphs $\G_1, \ldots, \G_k$.  Suppose that, for each subgraph and a 
suitable gearing $\Lambda_{A_i} \leq \TC_0(\G_i)$ as a consequence of 
the earlier results in this section.
Then $\Lambda_{A_1} + \cdots + \Lambda_{A_k} \leq \TC_0(\G)$.
\end{prop}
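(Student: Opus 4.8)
The plan is to construct, for a general mDAG $\G$ with geared subgraphs $\G_1, \ldots, \G_k$, a single latent-variable model whose induced distribution over $X_V$ can be perturbed simultaneously in every direction $\Lambda_{A_i}$, thereby placing $\Lambda_{A_1} + \cdots + \Lambda_{A_k}$ in the tangent cone $\TC_0(\G)$. The key observation is that a geared subgraph $\G_i \subseteq \G$ has fewer bidirected edges than $\G$ (it uses only a subset of the bidirected structure, arranged so as to satisfy running intersection), so Theorem \ref{thm:func} gives us a functional latent-variable model generating $\M(\G_i)$. I would like to run all these functional models "in parallel" inside the larger latent variable associated with $\G$, and argue that each contributes its own perturbation direction independently of the others.

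First I would recall that each bidirected edge $B$ of $\G$ is replaced in the canonical DAG $\bar\G$ by a latent vertex $u_B$ whose children are exactly the vertices in $B$; the latent variable $U_B$ has no constraints on its state space, so in particular we may take $U_B$ to be a tuple indexed by which subgraphs $\G_i$ use (a sub-edge of) $B$. Concretely, for each $i$, the geared subgraph $\G_i$ selects for every one of its vertices $v$ a single bidirected edge of $\G_i$ through which the functional construction of Section \ref{sec:geared} routes $v$; that edge is a subset of some edge $B$ of $\G$. So I would let $U_B$ carry, as independent coordinates, the functions $f^{(i)}_v$ needed by each $\G_i$ that assigns $v$ to (a sub-edge of) $B$. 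Setting $X_v = f^{(i^\ast(v))}_v(\cdots)$ for a fixed choice $i^\ast(v)$ would realise $\M(\G_{i^\ast})$; but the point is that we never need to make a global choice — instead we perturb the distributions $\rho^{(i)}$ governing the $f^{(i)}$-coordinates one $i$ at a time, holding the uniform distribution on all other coordinates, and read off the resulting first-order change in $p(x_V \mid x_W)$.

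The crux is the following independence-of-directions argument. At the uniform point $p_0 = \P[1,\ldots,1]$ every latent coordinate is uniform, and by Theorem \ref{thm:func} the induced distribution is uniform for each fixed $x_W$. If I now perturb only the $\G_i$-coordinates of the latent variables — say by replacing the uniform law on the $f^{(i)}$-functions by $1 + \eta \vep_i$ with $\vep_i$ degenerate — then, because the remaining coordinates are still uniform and independent, the marginal over $X_V$ is exactly the marginal one would get from the standalone functional model for $\G_i$ with that perturbation, averaged over the (now irrelevant, because uniform) extra randomness. Hence the derivative $D(\vep_i)$ is precisely the tangent vector the earlier results (Lemma \ref{lem:single}, Lemma \ref{lem:multiple}, Corollary \ref{cor:tc}) produce inside $\TC_0(\G_i)$; by hypothesis this vector ranges over all of $\Lambda_{A_i}$. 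Since $\M(\G_i) \subseteq \M(\G)$ (any distribution in the marginal model of a subgraph is in that of the larger graph, as $\G_i$ is obtained by deleting bidirected structure, which only restricts), we get $\Lambda_{A_i} \leq \TC_0(\G)$ for each $i$ separately; and because $\TC_0(\G)$ is shown (in the proof of Theorem \ref{thm:main}) to be a vector space — or, more directly, because the perturbations in distinct coordinates commute so we may add them, giving $p_0 + \eta\sum_i q_i + O(\eta^2) \in \M(\G)$ for $q_i \in \Lambda_{A_i}$ — the sum $\Lambda_{A_1} + \cdots + \Lambda_{A_k}$ lies in $\TC_0(\G)$.

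The main obstacle I expect is bookkeeping: making precise the claim that perturbing the $\G_i$-coordinates while leaving the rest uniform yields exactly the standalone $\G_i$-derivative, and in particular checking that the "extra" uniform coordinates really do integrate out without interaction. This requires being careful that the functional assignment $X_v = f^{(i)}_v(\ldots)$ used by $\G_i$ has, as its arguments, only variables that are genuinely parents of $v$ in the combined canonical DAG $\bar\G$ — which holds because $\G_i \subseteq \G$ means every directed and (sub-)bidirected edge of $\G_i$ is present in $\G$ — and that the recursive definition of the $\F_v$ for $\G_i$ respects a gearing of $\G_i$, so Theorem \ref{thm:func} applies verbatim to $\G_i$. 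Once that is set up, the linearity/commutativity of first-order perturbations in independent parameters is routine.
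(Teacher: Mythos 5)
Your argument correctly establishes that each $\Lambda_{A_i}$ \emph{individually} lies in $\TC_0(\G)$: since $\G_i$ is obtained from $\G$ by deleting or shrinking bidirected edges, $\M(\G_i) \subseteq \M(\G)$, and $p_0 \in \M(\G_i)$, so any tangent direction of $\M(\G_i)$ at $p_0$ is a tangent direction of $\M(\G)$. But the proposition asserts the \emph{sum} $\Lambda_{A_1} + \cdots + \Lambda_{A_k} \leq \TC_0(\G)$, and a tangent cone is in general only a cone, not closed under addition. This is exactly the step your proposal does not supply. Your first justification (that $\TC_0(\G)$ is a vector space by Theorem \ref{thm:main}) is circular, since the proof of Theorem \ref{thm:main} invokes Proposition \ref{prop:mix}. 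Your second justification ("perturbations in distinct coordinates commute so we may add them") does not go through as stated: although you enlarge each latent variable $U_B$ to carry coordinates $f^{(i)}_v$ for every subgraph, the structural equation for $X_v$ must output a single value, so it can only read \emph{one} of these coordinate families. Whichever deterministic assignment $i^\ast(v)$ you fix, the coordinates belonging to the other subgraphs are never routed to any observed variable, and perturbing their law has no effect on $p(x_V \mid x_W)$. Since the connected sets generating $A_1, \ldots, A_k$ may share vertices, no single deterministic assignment serves all $k$ directions at once, and $\M(\G)$ is not convex, so you cannot simply average the $k$ separate curves either.

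The missing idea is a randomized selection: introduce independent Bernoulli variables $U_v$ (one per vertex, hence legitimate exogenous noise in $\bar\G$ requiring no new bidirected edge) and set $Z_v$ equal to the $\G_1$-mechanism or the $\G_2$-mechanism according to $U_v$. The resulting distribution is still in $\M(\G)$, and expanding $P(Z_V = z_V) = 2^{-|V|} \sum_{B \subseteq V} P(X_B = z_B,\, Y_{V\setminus B} = z_{V \setminus B})$ one needs the second part of Lemma \ref{lem:single} (the identities for the enlarged sets $\Phi_i^B$) to see that each summand contributes $\eta\,(c_1' \lambda_{A_1} + c_2' \lambda_{A_2})$ at first order with nonnegative coefficients, the $\lambda_{A_1}$ term surviving exactly when $B$ contains the relevant connected set and vanishing otherwise. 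This yields $p_0 + \eta\,(c_1'' \lambda_{A_1} + c_2'' \lambda_{A_2}) + O(\eta^2)$ with $c_i'' > 0$, and hence the sum of the two subspaces in the tangent cone; your proposal never constructs such a single curve in $\M(\G)$ realizing the combined perturbation.
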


In other words, the tangent cone of $\G$ includes the vector space spanned
by all the tangent cones of the subgraphs.

\begin{proof}
  First consider the case $W = \emptyset$ and $k=2$, 
  from which the general result will follow similarly.

Let $p_1 \in \M(\G_1) \subseteq \M(\G)$ be formed by random functions 
$f_V$ according to a gearing of $\G_1$, and $p_2 \in \M(\G_2) \subseteq \M(\G)$ by
  random functions $\tilde{f}_V$ according to a gearing of $\G_2$.  
  Let $U_v$ be independent
  Bernoulli$(\frac{1}{2})$ variables, and define a new distribution by
  setting
\[
Z_v = U_v f_v(f_{\pi(v)}, Z_{\pa(v)}) + (1-U_v) \tilde{f}_v(\tilde{f}_{\tilde\pi(v)}, Z_{\pa(v)});
\]
i.e.\ we randomly (and independently of all other vertices) choose one
of the mechanisms $f_v$ or $\tilde{f}_v$ to generate $Z_v$.  
Denote $X_v \equiv f_v(f_{\pi(v)}, Z_{\pa(v)})$ and 
$Y_v \equiv \tilde{f}_v(\tilde{f}_{\pi(v)}, Z_{\pa(v)})$. 
Note that
although $f_v$ and $\tilde{f}_v$ are independent the values of
$X_v$ and $Y_v$ are not, since they share parent variables. 

Denote the resulting joint distribution of $Z_V$ by $p$.  We have
$p \in \M(\G)$ since we are still generating each variable
as a random function of its parents and some independent noise, which
clearly satisfies the structural equation property for $\bar{\G}$. 


Splitting into cases indexed by $B \equiv \{v : U_v = 1\}$, we have
\begin{align}
P(Z_V = z_V) &= \sum_{B \subseteq V} P(U_B = 1, U_{V \setminus B} = 0, X_B = z_B, Y_{V \setminus B} = z_{V \setminus B}) \nonumber\\
&= \frac{1}{2^{|V|}} \sum_{B \subseteq V} P(X_B = z_B, Y_{V \setminus B} = z_{V \setminus B }). \label{eqn:sumB}
\end{align}
It follows from the proof of Lemma \ref{lem:multiple} that if $A_1 \in
\mathcal{A}(\G_1)$ and $\lambda_{A_1} \in \Lambda_{A_1}$ then there exists a
degenerate $\delta(f_{C_i})$ such that
\begin{align*}
\sum_{\Phi_k} \cdots \sum_{\Phi_i} \delta(f_{C_i}) \cdots \sum_{\Phi_1} 1
 &= \lambda_{A_1}(x_{A_1})
\end{align*}
and from Lemma \ref{lem:single} that 
\begin{align}
\sum_{\Phi_k^B} \cdots \sum_{\Phi_i^B} \delta(f_{C_i}) \cdots \sum_{\Phi_1^B} 1
 &= \left\{ 
 \begin{array}{ll} 
|\X_{V \setminus B}| \lambda_{A_1}(x_{A_1}) & \text{if } C_i \subseteq B \\ 
0 & \text{otherwise.} 
\end{array} \right. \label{eqn:par}
\end{align}
Similarly, given $A_2 \in
\mathcal{A}(\G_2)$ and $\lambda_{A_2} \in \Lambda_{A_2}$
there exists a $\tilde{\delta}(\tilde{f}_{C_j})$ satisfying equivalent 
conditions over $\tilde{f}$s. 

Now since the functions used to generate $X_V$ and $Y_V$ are
independent,
\begin{align*}
\lefteqn{P(X_B = z_B, Y_{V \setminus B} = z_{V \setminus B})}\\ 
&= \left(\sum_{\Phi_k^B} \rho_k \cdots \sum_{\Phi_i^B} (\rho_i + \eta \delta ) \cdots \sum_{\Phi_1^B} \rho_1\right) \left(\sum_{\tilde\Phi_k^{V\setminus B}} \tilde\rho_k \cdots \sum_{\tilde\Phi_j^{V\setminus B}} (\tilde \rho_j + \eta \tilde{\delta}) \cdots \sum_{\tilde\Phi_1^{V\setminus B}} \tilde \rho_1\right)\\
&= \left(|\X_B|^{-1} + \eta c_1 \lambda_{A_1}\right) 
\left(|\X_{V \setminus B}|^{-1} + \eta c_2 \lambda_{A_2}\right)\\
&= |\X_V|^{-1} + \eta (c'_1 \lambda_{A_1} + c'_2 \lambda_{A_2}) + O(\eta^2).
\end{align*}
The first equality above follows just from consideration of which functions
we need to evaluate to which values in order to obtain $X_B = z_B$ and 
$Y_{V \setminus B} = z_{V \setminus B}$ (although this expression
apparently factorizes, note that both factors can depend upon all 
of $z_V$).  Note that possibly $c_i' = 0$, depending on which of the
conditions from (\ref{eqn:par}) are satisfied.
However, certainly $c_i' > 0$ for some subsets $B$, so 
plugging this back into (\ref{eqn:sumB}) we get
\begin{align*}
P(Z_V = z_V) &= 
|\X_V|^{-1} + \eta (c_1'' \lambda_{A_1} + c_2'' \lambda_{A_2}) + O(\eta^2),
\end{align*}
where $c_i'' > 0$.  Then by an appropriate choice of scaling for
each $\lambda_{A_i}$ we see that $\Lambda_{A_1} + \Lambda_{A_2} \leq
\TC_0(\G)$.  

For non-empty $W$, we can draw $Z_W = X_W = Y_W$ as a
uniform random variable, and then look at $X_V \,|\, X_W$; the proof
is otherwise the same.
\end{proof}

\begin{exm}
  The bidirected 4-cycle in Figure \ref{fig:4cycle}(a) is not geared,
  and therefore we cannot apply our earlier results to it directly.
  The nested model for this graph is equivalent to the model defined 
  by the constraints $X_1 \indep X_3$ and $X_2 \indep X_4$, and has 
  parameterizing sets
\begin{align*}
\mathcal{A}(\G) &= \{\{1\}, \; \{2\}, \; \{1,2\}, \; \{3\}, \; \{2,3\}, \; \{1,2,3\}, \\
								& \qquad		\; \{4\}, \; \{1,4\}, \; \{1,2,4\}, \; \{3,4\}, \; \{1,3,4\}, \; \{2,3,4\}, \; \{1,2,3,4\}\},
\end{align*}
which are also the bidirected-connected sets of vertices.
The two subgraphs in Figures \ref{fig:4cycle}(b) and (c), say $\G_1$ and $\G_2$, \emph{are} geared, however, and have parameterizing sets
\begin{align*}
\mathcal{A}(\G_1) &= \{\{1\}, \; \{2\}, \; \{1,2\}, \; \{3\}, \; \{2,3\}, \; \{1,2,3\}, \\
	&\qquad \; \{4\}, \; \{3,4\}, \; \{2,3,4\}, \; \{1,2,3,4\}\}\\
\mathcal{A}(\G_2) &= \{\{1\}, \; \{2\}, \; \{1,2\}, \; \{3\},\; \{4\}, \; \{1,4\},  \\
& \qquad \; \{1,2,4\}, \; \{3,4\}, \; \{1,3,4\}, \; \{1,2,3,4\}\};
\end{align*}
we have $\bigoplus_{A \in \mathcal{A}(\G_i)} \Lambda_A \leq
\TC_0(\G_i)$ for $i=1,2$ by Corollary \ref{cor:tc}.  Note that
$\mathcal{A}(\G_1) \cup \mathcal{A}(\G_2) = \mathcal{A}(\G)$, and
therefore by applying Proposition \ref{prop:mix} with these graphs, we
find that
\begin{align*}
\bigoplus_{A \in \mathcal{A}(\G)} \Lambda_A = \bigoplus_{A \in \mathcal{A}(\G_1)} \Lambda_A + \bigoplus_{A \in \mathcal{A}(\G_2)} \Lambda_A  \leq \TC_0(\G).
\end{align*}
It follows that the marginal model is also defined by the
independences $X_1 \indep X_3$ and $X_2 \indep X_4$, possibly with
some additional inequality constraints.
\end{exm}

We are now in a position to put together these ideas and prove
the main result for general mDAGs. 

\begin{proof}[Proof of Theorem \ref{thm:main}]
Suppose first that $\G$ is geared.  

$\P[1, \ldots, 1 + \eta\vep_i, \ldots, 1]$ obeys the nested Markov property for any degenerate function $\vep_i$ and $\eta$ sufficiently small that $1 + \eta\vep_i$ is positive; it follows that $T_i \leq \TC_0$ for each $i$, and that therefore using Corollary \ref{cor:tc},
\begin{align*}
\bigoplus_{A \in \mathcal{A}(\G)} \Lambda_A \leq T_1 + \cdots + T_k
\end{align*}
is also contained in $\TC_0$, by the differentiability of $\P[\cdot]$ at $(1, \ldots,1)$.  

Now for general $\G$, and each $A \in \mathcal{A}(\G)$, there exists a 
geared subgraph $\G'$ of $\G$ such that $\Lambda_A \leq \TC_0(\G')$ 
by Lemma \ref{lem:gearedsub}.  Then applying Proposition \ref{prop:mix}, 
we see that the space spanned by these subspaces is contained within the 
tangent cone for $\G$:
\begin{align*}
\bigoplus_{A \in \mathcal{A}(\G)} \Lambda_A \leq \TC_0(\G).
\end{align*}
If a distribution is in the marginal model then it
is also in the nested model, and therefore $\TC_0$ is
contained within the tangent space $\TS^n_0$ of $\mathcal{N}(\G)$ at $p_0$, which has dimension
\begin{align*}
\dim(\TS^n_0) &= \sum_{H \in \mathcal{H}(\G)} |\X_T| \prod_{h \in H} (|{\X}_h|-1)\\
&= \sum_{A \in \mathcal{A}(\G)} \dim(\Lambda_A);
\end{align*}
the second equality here follows from 
$\dim(\Lambda_A) = \prod_{h \in A} (|{\X}_h|-1)$ and
\begin{align*}
\sum_{H \subseteq A \subseteq H \cup T} \dim(\Lambda_A) 
= \sum_{H \subseteq A \subseteq H \cup T} \prod_{h \in A} (|{\X}_h|-1) 
= |\X_T| \prod_{h \in H} (|{\X}_h|-1).
\end{align*}
Then combining
\begin{align*}
\bigoplus_{A \in \mathcal{A}(\G)} \Lambda_A \leq \TC_0 \subseteq \TS_0^n
\end{align*}
with the dimension of $\TS_0^n$ gives the result.
\end{proof}

As a corollary of this result, within a neighbourhood of $p_0$
the models $\M(\G)$ and $\mathcal{N}(\G)$ are the same.  This is 
because $\mathcal{N}(\G)$ is parametrically defined via polynomials, 
and therefore its Zariski closure is an irreducible variety
\citep[see, e.g.][Proposition 4.5.5]{clo}.
For algebraic varieties $V_1, V_2$, if $V_1 \subseteq V_2$ 
and $V_2$ is irreducible, then either $V_1$ has a strictly 
smaller dimension than $V_2$, or they are identical.  The results
about the tangent space show that the Zariski closures of $\M$
and $\mathcal{N}$ have the same dimension, and therefore they are the 
same.  This means that, locally to $p_0$, the models themselves
are also the same.  

\section{Smoothness of the marginal model} \label{sec:smooth}

The results of Section \ref{sec:main}, together with the smoothness of
the nested model, allow us to show that for geared graphs, the
interior of the marginal model is a smooth manifold.

\begin{thm} \label{thm:manifold} For a geared graph $\G$ and
  state-space $\X_{VW}$, the relative interior of the marginal model $\M(\G)$ is
  a manifold of dimension $d(\G, \X_{VW})$, and its boundary is
  described by a finite number of semi-algebraic constraints.
\end{thm}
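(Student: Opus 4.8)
The plan is to deduce this from the geometry of the nested model: the crux is to show that $\M(\G)$ and $\mathcal{N}(\G)$ have the same Zariski closure $\mathcal{V}$, after which $\mathcal{V}$ is an irreducible real variety of dimension $d(\G,\X_{VW})$ that is nonsingular on the strictly positive simplex, and $\M(\G)$ is a full-dimensional semi-algebraic subset of it, so that the manifold/boundary dichotomy follows from standard semi-algebraic geometry.

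First I would establish that the Zariski closures $\mathcal{V}_\M$ of $\M(\G)$ and $\mathcal{V}_\mathcal{N}$ of $\mathcal{N}(\G)$ coincide. Because $\G$ is geared, Theorem~\ref{thm:func} (and the remark following it) exhibits $\M(\G)$ as the image of a product of probability simplices — the parameters $\rho_1,\dots,\rho_k$ — under the polynomial map $\P[\,\cdot\,]$; hence $\M(\G)$ is semi-algebraic and $\mathcal{V}_\M$, as the Zariski closure of the image of an irreducible variety under a regular map, is irreducible. Now $\M(\G)\subseteq\mathcal{N}(\G)$ gives $\mathcal{V}_\M\subseteq\mathcal{V}_\mathcal{N}$, while $\mathcal{V}_\mathcal{N}$ is irreducible of dimension $d(\G,\X_{VW})$. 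On the other hand Theorem~\ref{thm:main} identifies the tangent cone of $\M(\G)$ at $p_0$ with $\bigoplus_{A\in\mathcal{A}(\G)}\Lambda_A$, a vector space of dimension $\sum_{A\in\mathcal{A}(\G)}\dim\Lambda_A=d(\G,\X_{VW})$; since for a semi-algebraic set the dimension of the tangent cone at a point does not exceed the local dimension there, $\dim\mathcal{V}_\M=\dim\M(\G)\ge d(\G,\X_{VW})$. An irreducible variety of dimension $d$ contained in an irreducible variety of the same dimension equals it, so $\mathcal{V}_\M=\mathcal{V}_\mathcal{N}=:\mathcal{V}$.

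Next I would extract the manifold. Let $\mathcal{V}^{\mathrm{reg}}$ be the regular locus of $\mathcal{V}$, a smooth (indeed real-analytic) manifold of dimension $d(\G,\X_{VW})$; by the theorem on the nested model, $\mathcal{N}(\G)$, and hence $\mathcal{V}$, has no singularity in the open probability simplex $\Delta^\circ$, so $\mathcal{V}^{\mathrm{reg}}$ contains $\mathcal{V}\cap\Delta^\circ$ and in particular is non-empty and contains $p_0$, while the singular locus $\mathcal{V}\setminus\mathcal{V}^{\mathrm{reg}}$ is a proper subvariety of dimension $<d(\G,\X_{VW})$. Since $\dim\M(\G)=d(\G,\X_{VW})$ the set $\M(\G)\cap\mathcal{V}^{\mathrm{reg}}$ is still $d$-dimensional, and a $d$-dimensional semi-algebraic subset of the $d$-manifold $\mathcal{V}^{\mathrm{reg}}$ has non-empty interior therein; I take the \emph{interior} of $\M(\G)$ to be $\mathcal{M}^\circ:=\operatorname{int}_{\mathcal{V}^{\mathrm{reg}}}\!\big(\M(\G)\cap\mathcal{V}^{\mathrm{reg}}\big)$, which, being open in the smooth manifold $\mathcal{V}^{\mathrm{reg}}$, is itself a smooth manifold of dimension $d(\G,\X_{VW})$. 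The \emph{boundary} $\M(\G)\setminus\mathcal{M}^\circ$ is semi-algebraic, being assembled from $\M(\G)$, $\mathcal{V}^{\mathrm{reg}}$ and the interior operation by semi-algebraic constructions, so by Tarski--Seidenberg it is cut out by a finite Boolean combination of polynomial equalities and inequalities; and since it is contained in the union of the topological boundary of $\M(\G)\cap\mathcal{V}^{\mathrm{reg}}$ in $\mathcal{V}^{\mathrm{reg}}$ with the singular locus of $\mathcal{V}$, both of dimension $<d(\G,\X_{VW})$, it has dimension strictly below $d(\G,\X_{VW})$.

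The main obstacle is the dimension comparison in the second step — inferring $\dim\M(\G)\ge d(\G,\X_{VW})$ from the tangent-cone computation of Theorem~\ref{thm:main}. This needs $\M(\G)$ to be genuinely semi-algebraic, so that its tangent cone at $p_0$ certifies its own local dimension rather than merely that of the larger model $\mathcal{N}(\G)$ around it (for an arbitrary set the tangent cone can have strictly larger dimension than the set itself); this is precisely why the geared hypothesis is required, since only then do we have a finite discrete latent-variable representation and hence semi-algebraicity of $\M(\G)$. A secondary point is pure bookkeeping: the relative interior of $\M(\G)$ in $\mathcal{V}$ could in principle meet the singular locus of $\mathcal{V}$, which can sit on the boundary of the simplex, and this is handled cleanly by working inside $\mathcal{V}^{\mathrm{reg}}$ throughout and folding the lower-dimensional remainder into the boundary.
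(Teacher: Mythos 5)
Your proposal is correct and follows essentially the same route as the paper: semi-algebraicity of $\M(\G)$ from the geared hypothesis, irreducibility and smoothness of the nested model's variety on the open simplex, and the tangent-cone dimension count forcing the two Zariski closures to coincide so that $\M(\G)$ sits inside $\mathcal{N}(\G)$ as a full-dimensional semi-algebraic subset cut out by inequalities alone. You make explicit a step the paper leaves implicit --- that for a \emph{semi-algebraic} set the tangent-cone dimension at a point bounds the local dimension from below, which is what rules out $\M(\G)$ being carved out by additional equalities --- and that is a worthwhile clarification rather than a departure.
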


\begin{proof}
  The nested Markov model is parametrically defined (with a polynomial
  parameterization), and therefore its
  Zariski closure is an irreducible variety \citep[see,
  e.g.][Proposition 4.5.5]{clo}.  Furthermore \citet{evans:param} give a
  diffeomorphism between the set of strictly positive distributions
  obeying the nested Markov property, and an open parameter set.  It
  follows that $\mathcal{N}(\G)$ is a manifold on the interior of the
  simplex \citep[see, for example,][Appendix A]{kass:97}.

  As noted in the previous section, the marginal model for a geared
  graph is a semi-algebraic set.  Since the $\M(\G) \subseteq \mathcal{N}(\G)$
  and these two sets have the same Zariski closure, it follows that 
  $\M(\G)$ is defined from $\mathcal{N}(\G)$ 
  by a finite number of additional polynomial inequalities. 
  It further follows that it is also a manifold at any point these 
  inequality constraints are not active. 
\end{proof}

It follows from Theorem \ref{thm:manifold} that the interior of the
marginal model for a geared mDAG is a curved exponential family of
dimension $d(\G, \X_{VW})$, and that therefore the nice statistical
properties of these models can be applied.  For example, the maximum
likelihood estimator of a distribution within the model will be
asymptotically normal and unbiased, and the likelihood ratio statistic
for testing this model has an asymptotic $\chi^2$-distribution. 
  
  For non-geared mDAGs we cannot assume that the latent
variables are discrete without loss of generality, so it is
conceivable that these marginal models may be defined by
non-polynomial inequalities on the probabilities.  We conjecture,
however, that a result akin to Theorem \ref{thm:manifold} does hold
for general graphs. 
  
  For a point on the boundary defined by an active
inequality constraint, the asymptotic distribution of the likelihood ratio 
statistic may be much more
complicated \citep{drton:09a}; in general it is a mixture of $\chi^2$-distributions,
and this mixture will vary depending upon the unknown truth.  
%
%
A possible advantage of the nested model is that we can guarantee
that the true distribution does not lie on the boundary of $\mathcal{N}$ 
if the MLE consists of strictly positive probabilities, because the boundary
only consists of distributions with at least some zero probabilities; the same 
cannot be said for $\M$.  
This is depicted in Figure 
\ref{fig:diagram}, in which the MLE under the nested model ($\hat{p}_n)$ 
is in the interior of $\mathcal{N}$, but the MLE for the marginal model 
$\hat{p}_m$ lies on the boundary of $\M$.  

Inequality constraints are generally much more complicated than
equality constraints, and efforts to characterize them fully in DAGs with
latent variable models have been limited by computational challenges.  
\citet{evans:12}, generalizing a
result first given by \citet{pearl:95}, provides a graphical criterion
for obtaining some inequalities, but deriving a complete set of bounds 
may be an NP-hard problem \citep{steeg:11}.



\begin{figure}
\begin{center}

\begin{tikzpicture}
\coordinate (fl) at (-3,0);
\coordinate (fr) at (4,0);
\coordinate (pm) at (-0.5,0);
\coordinate (pn) at (1.5,0);
\coordinate (phat) at (1.5,1.5);
\node at (pm) [fill=black,circle,scale=0.5] {};
\node[below left] at (pm) {$\hat{p}_m$};
\node at (pn) [fill=black,circle,scale=0.5] {};
\node[below right] at (pn) {$\hat{p}_n$};
\node at (phat) {$\times$};
\node[above right] at (phat) {$\hat{p}$};
\draw[black,ultra thick] (fl) -- (pm);
\draw[black,ultra thick,dotted] (fl) -- (-3.5,0);
\draw[black] (fl) -- (fr);
\draw[black,dotted] (fr) -- (4.5,0);
\draw[black,dashed] (phat) -- (pn);
\draw[black,dashed] (phat) -- (pm);
  
  \draw[-latex,thick](-3,1.5) node[xshift=1mm,left,scale=1]{$\M$}
  to[out=0,in=110] (-1.5,0);
  \draw[-latex,thick](4.5,1.5) node[xshift=-1mm,right,scale=1]{$\mathcal{N}$}
  to[out=200,in=70] (3.5,0);
\end{tikzpicture}

\caption{Diagramatic representation of estimation with the marginal
model.  The thicker line represents the marginal model, and its thinner
extension the nested model.  
The unconstrainted MLE is shown as $\hat{p}$, and its projection 
to MLEs under the marginal and nested models as $\hat{p}_m$ and 
$\hat{p}_n$ respectively.  Note that $\hat{p}_m$ is on the boundary
of the model $\M$; if the true data generating distribution is on the
boundary this generally leads to irregular asymptotics.
}
\label{fig:diagram2}
\end{center}
\end{figure}
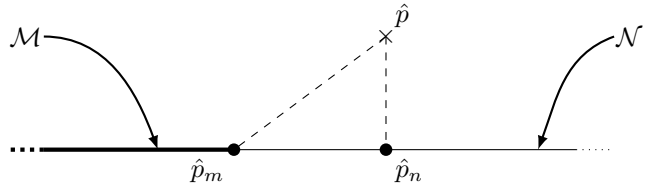

\subsection{Model Fitting}

In theory we can exactly fit the marginal model for a geared graph using a
latent variable model of the kind derived in Section \ref{sec:geared}.
In practice this model is massively over-parameterized and
unidentifiable, with the state-space of sets $\mathcal{F}_v$ being
potentially very large even for modest graphs; this will cause
problems for most standard fitting algorithms.  We can restrict the
state-space of the latent variables to something more managable to
obtain some latent variable model $\mathcal{L}(\G) \subseteq \M(\G)$;
in general the inclusion will be strict, as in the example in the 
introduction and as depicted in Figure \ref{fig:diagram}.  
However, for any graph $\G$---whether geared or not---and any latent
variable model we have
$\mathcal{L}(\G) \subseteq \M(\G) \subseteq \mathcal{N}(\G)$.  Fitting
the nested model by maximum likelihood (ML) is straightforward using
the algorithm in \citet{evans:10}, and a latent variable model can
be fitted using (for example) an EM algorithm.  A measure of 
goodness-of-fit for these two models can be used to bound the 
goodness-of-fit of the marginal model,
and thus potentially used to confirm or refute the marginal model.
Fitting the marginal model directly is likely to be extremely difficult
for general graphs: see the discussion in \citet{evans:mdag}.

\subsubsection*{Acknowledgements}

We thank Angelos Armen for a very close reading and substantial 
comments, as well as the associate editor and two anonymous
referees for their suggestions.

\bibliographystyle{abbrvnat}
\bibliography{mybib}

\begin{thebibliography}{34}
\providecommand{\natexlab}[1]{#1}
\providecommand{\url}[1]{\texttt{#1}}
\expandafter\ifx\csname urlstyle\endcsname\relax
  \providecommand{\doi}[1]{doi: #1}\else
  \providecommand{\doi}{doi: \begingroup \urlstyle{rm}\Url}\fi

\bibitem[Anandkumar et~al.(2013)Anandkumar, Hsu, Javanmard, and
  Kakade]{anandkumar:13}
A.~Anandkumar, D.~Hsu, A.~Javanmard, and S.~Kakade.
\newblock Learning linear {B}ayesian networks with latent variables.
\newblock In \emph{Proceedings of The 30th International Conference on Machine
  Learning}, volume~28, pages 249--257, 2013.

\bibitem[Basu et~al.(1996)Basu, Pollack, and Roy]{basu:06}
S.~Basu, R.~Pollack, and M.-F. Roy.
\newblock \emph{Algorithms in real algebraic geometry}.
\newblock Springer, 1996.

\bibitem[Bishop(2007)]{bishop:07}
C.~M. Bishop.
\newblock \emph{Pattern recognition and machine learning}.
\newblock Springer, 2007.

\bibitem[Cox et~al.(2007)Cox, Little, and O'Shea]{clo}
D.~Cox, J.~Little, and D.~O'Shea.
\newblock \emph{Ideals, varieties, and algorithms: an introduction to
  computational algebraic geometry and commutative algebra}.
\newblock Springer, 2007.
\newblock Third Edition.

\bibitem[Darwiche(2009)]{darwiche:09}
A.~Darwiche.
\newblock \emph{Modeling and reasoning with {B}ayesian networks}.
\newblock Cambridge University Press, 2009.

\bibitem[Dawid(2002)]{dawid:02}
A.~P. Dawid.
\newblock Influence diagrams for causal modelling and inference.
\newblock \emph{International Statistical Review}, 70\penalty0 (2):\penalty0
  161--189, 2002.

\bibitem[Decker et~al.(2016)Decker, Greuel, Pfister, and
  Sch\"onemann]{singular}
W.~Decker, G.-M. Greuel, G.~Pfister, and H.~Sch\"onemann.
\newblock {\sc Singular} {4-1-0} --- {A} computer algebra system for polynomial
  computations.
\newblock \url{http://www.singular.uni-kl.de}, 2016.

\bibitem[Drton(2009{\natexlab{a}})]{drton:09}
M.~Drton.
\newblock Discrete chain graph models.
\newblock \emph{Bernoulli}, 15\penalty0 (3):\penalty0 736--753,
  2009{\natexlab{a}}.

\bibitem[Drton(2009{\natexlab{b}})]{drton:09a}
M.~Drton.
\newblock Likelihood ratio tests and singularities.
\newblock \emph{Annals of Statistics}, 37\penalty0 (2):\penalty0 979--1012,
  2009{\natexlab{b}}.

\bibitem[Evans(2012)]{evans:12}
R.~J. Evans.
\newblock Graphical methods for inequality constraints in marginalized {DAGs}.
\newblock In \emph{Machine Learning for Signal Processing (MLSP)}, 2012.

\bibitem[Evans(2016)]{evans:mdag}
R.~J. Evans.
\newblock Graphs for margins of {B}ayesian networks.
\newblock \emph{Scandinavian Journal of Statistics}, 43\penalty0 (3):\penalty0
  625--648, 2016.

\bibitem[Evans and Richardson(2010)]{evans:10}
R.~J. Evans and T.~S. Richardson.
\newblock Maximum likelihood fitting of acyclic directed mixed graphs to binary
  data.
\newblock In \emph{Proceedings of the 26th conference on Uncertainty in
  Artificial Intelligence (UAI-08)}, 2010.

\bibitem[Evans and Richardson(2014)]{evans:14}
R.~J. Evans and T.~S. Richardson.
\newblock Markovian acyclic directed mixed graphs for discrete data.
\newblock \emph{Annals of Statistics}, 42:\penalty0 1452--1482, 2014.

\bibitem[Evans and Richardson(2015)]{evans:param}
R.~J. Evans and T.~S. Richardson.
\newblock Parameterization of the discrete nested {M}arkov model.
\newblock In preparation, 2015.

\bibitem[Fox et~al.(2014)Fox, K\"aufl, and Drton]{fox:14}
C.~J. Fox, A.~K\"aufl, and M.~Drton.
\newblock On the causal interpretation of acyclic mixed graphs under
  multivariate normality.
\newblock \emph{Linear Algebra and its Applications}, 2014.

\bibitem[Fritz(2012)]{fritz:12}
T.~Fritz.
\newblock Beyond {B}ell's {T}heorem: Correlation scenarios.
\newblock \emph{New J. Phys}, 14:\penalty0 103001, 2012.

\bibitem[Kass and Vos(1997)]{kass:97}
R.~E. Kass and P.~W. Vos.
\newblock \emph{Geometrical foundations of asymptotic inference}.
\newblock John Wiley \& Sons, 1997.

\bibitem[Lauritzen(1996)]{lau:96}
S.~L. Lauritzen.
\newblock \emph{Graphical Models}.
\newblock Clarendon Press, Oxford, UK, 1996.

\bibitem[Neyman(1923)]{neyman:23}
J.~Neyman.
\newblock Sur les applications de la th{\'e}orie des probabilit{\'e}s aux
  experiences agricoles: Essai des principes.
\newblock \emph{Roczniki Nauk Rolniczych}, 10:\penalty0 1--51, 1923.
\newblock In Polish; English translation by D.\ Dabrowska and T.\ Speed in
  \emph{Statist. Science} 5 463--472, 1990.

\bibitem[Pearl(1995)]{pearl:95}
J.~Pearl.
\newblock On the testability of causal models with latent and instrumental
  variables.
\newblock In \emph{Proceedings of the 11th conference on Uncertainty in
  Artificial Intelligence (UAI-95)}, pages 435--443, 1995.

\bibitem[Pearl(2009)]{pearl:09}
J.~Pearl.
\newblock \emph{Causality: Models, Reasoning, and Inference}.
\newblock Cambridge University Press, second edition, 2009.

\bibitem[Provan and Billera(1980)]{provan:80}
J.~Provan and L.~Billera.
\newblock Decompositions of simplicial complexes related to diameters of convex
  polyhedra.
\newblock \emph{Mathematics of Operations Research}, 5\penalty0 (4):\penalty0
  576--594, 1980.

\bibitem[Richardson(2003)]{richardson:03}
T.~S. Richardson.
\newblock Markov properties for acyclic directed mixed graphs.
\newblock \emph{Scand. J. Statist.}, 30\penalty0 (1):\penalty0 145--157, 2003.

\bibitem[Richardson and Spirtes(2002)]{richardson:02}
T.~S. Richardson and P.~Spirtes.
\newblock Ancestral graph {M}arkov models.
\newblock \emph{Annals of Statistics}, 30\penalty0 (4):\penalty0 962--1030,
  2002.

\bibitem[Richardson et~al.(2011)Richardson, Evans, and Robins]{richardson:11}
T.~S. Richardson, R.~J. Evans, and J.~M. Robins.
\newblock Transparent parameterizations of models for potential outcomes.
\newblock \emph{Bayesian Statistics}, 9:\penalty0 569--610, 2011.

\bibitem[Richardson et~al.(2017)Richardson, Evans, Robins, and
  Shpitser]{richardson:17}
T.~S. Richardson, R.~J. Evans, J.~M. Robins, and I.~Shpitser.
\newblock Nested {M}arkov properties for acyclic directed mixed graphs.
\newblock arXiv:1701.06686, 2017.

\bibitem[Robins(1986)]{robins:86}
J.~Robins.
\newblock A new approach to causal inference in mortality studies with a
  sustained exposure period-application to control of the healthy worker
  survivor effect.
\newblock \emph{Mathematical Modelling}, 7\penalty0 (9):\penalty0 1393--1512,
  1986.

\bibitem[Rubin(1974)]{rubin:74}
D.~B. Rubin.
\newblock Estimating causal effects of treatments in randomized and
  nonrandomized studies.
\newblock \emph{Journal of Educational Psychology}, 66\penalty0 (5):\penalty0
  688--701, 1974.

\bibitem[Shpitser et~al.(2012)Shpitser, Richardson, Robins, and
  Evans]{shpitser:12}
I.~Shpitser, T.~S. Richardson, J.~M. Robins, and R.~J. Evans.
\newblock Parameter and structure learning in nested {M}arkov models.
\newblock In \emph{UAI-12 (Workshop on Causal Structure Learning)}, 2012.

\bibitem[Silva and Ghahramani(2009)]{silva:09}
R.~Silva and Z.~Ghahramani.
\newblock The hidden life of latent variables: {B}ayesian learning with mixed
  graph models.
\newblock \emph{The Journal of Machine Learning Research}, 10:\penalty0
  1187--1238, 2009.

\bibitem[Spirtes et~al.(2000)Spirtes, Glymour, and Scheines]{sgs:00}
P.~Spirtes, C.~Glymour, and R.~Scheines.
\newblock \emph{Causation, Prediction and Search}.
\newblock MIT press, 2000.

\bibitem[Tian and Pearl(2002)]{tian:02}
J.~Tian and J.~Pearl.
\newblock On the testable implications of causal models with hidden variables.
\newblock In \emph{Proceedings of the 18th Conference on Uncertainty in
  Artificial Intelligence (UAI-02)}, pages 519--527, 2002.

\bibitem[ver Steeg and Galstyan(2011)]{steeg:11}
G.~ver Steeg and A.~Galstyan.
\newblock A sequence of relaxations constraining hidden variable models.
\newblock In \emph{Proceedings of the 27th Conference on Uncertainty in
  Artificial Intelligence (UAI-11)}, 2011.

\bibitem[Verma and Pearl(1990)]{verma:90}
T.~S. Verma and J.~Pearl.
\newblock Equivalence and synthesis of causal models.
\newblock In \emph{Proceedings of the 6th Conference on Uncertainty in
  Artificial Intelligence (UAI-90)}, pages 255--268, 1990.

\end{thebibliography}

\appendix

\section{Degenerate Binary State-Space} \label{sec:deg}

Consider the model from Example \ref{exm:one} where we take
all five variables to be binary.  
The model can be written as
\begin{align*}
p(x_1, x_2, x_3, x_4) &= \sum_{x_0} p(x_0) \cdot p(x_1) \cdot p(x_2 \mid x_1, x_0) \cdot p(x_3 \mid x_2) \cdot p(x_4 \mid x_3, x_0)\\
&= p(x_1) \cdot p(x_3 \mid x_2) \cdot \underbrace{\sum_{x_0} p(x_0)  \cdot p(x_2 \mid x_1, x_0) \cdot p(x_4 \mid x_3, x_0)}_{p^*(x_2, x_4 \mid x_1, x_3)}.
\end{align*}
The dimension of the model is therefore the sum of the dimensions of 
these three factors (of which the first two are 1 and 2 respectively).
For the final factor, assuming $X_0$ is binary means it may be written
as
\begin{align}
p^*(x_2, x_4 \mid x_1, x_3) &= \sum_{x_0} p(x_0) \cdot p(x_2 \mid x_1, x_0) \cdot p(x_4 \mid x_3, x_0) \nonumber \\ 
&= \alpha \cdot q(x_2 \mid x_1) \cdot q(x_4 \mid x_3) + (1-\alpha) \cdot r(x_2 \mid x_1) \cdot r(x_4 \mid x_3) \label{eqn:lv}
\end{align}
for some distributions $q, r$.
This is a parametric definition of a variety over the
probabilities $p(x_2, x_4 \mid x_1, x_3)$, and using the 
computational algebra package {\sc Singular} \citep{singular}
we explicitly found the polynomial constraints that 
define it\footnote{For the code used and the resulting 
polynomial constraints, see 
\url{http://www.stats.ox.ac.uk/~evans/bell.html}.
As well as the usual marginal independence 
constraints, the latent variable model implies an additional cubic 
polynomial constraint on the observed conditional probabilities (for 
which we were unable to find a nice interpretation).}.  It 
turns out that the set of such probabilities that can
be written in the form (\ref{eqn:lv}) has dimension 7, 
and therefore the total dimension of the latent variable model
is $1 + 2 + 7=10$.

\section{Technical Proofs} \label{sec:proofs}






\subsection{Degenerate Functions}

We present a series of Lemmas which build up to showing that we can
construct degenerate functions from finite sums and products of
degenerate functions with simpler argument sets.

\begin{lem} \label{lem:rankone} Let $\lambda$ be a discrete $(A \cup
  B)$-degenerate function, for $A \cap B = \emptyset$.  Then $\lambda$
  can be written as a finite sum
\begin{align*}
\lambda = \sum_i \lambda_{A}^i \lambda_{B}^i
\end{align*}
of $A$-degenerate functions $\lambda_{A}^i$, and $B$-degenerate functions $\lambda_{B}^i$.
\end{lem}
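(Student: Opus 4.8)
The plan is to reduce the claim to the orthogonal decomposition of Proposition \ref{prop:decomp}. First I would note that an $A$-degenerate function is precisely an element of $\Lambda_A$ when the ambient product space is taken to be $\X_A$ itself (i.e.\ applying the definition of $\Lambda_A$ with $A$ in place of $V$), and similarly for $B$; and that, since $A \cap B = \emptyset$, there is a canonical identification $\reals^{\X_{A \cup B}} = \reals^{\X_A} \otimes \reals^{\X_B}$ sending a pair $(\mu,\nu)$ to the function $(x_A, x_B) \mapsto \mu(x_A)\,\nu(x_B)$.

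Next I would apply Proposition \ref{prop:decomp} three times, with $A$, with $B$, and with $A \cup B$ in place of $V$. Tensoring the first two decompositions gives
\[
\reals^{\X_{A \cup B}} = \bigoplus_{A' \subseteq A,\; B' \subseteq B} \Lambda_{A'} \otimes \Lambda_{B'},
\]
while the third gives $\reals^{\X_{A \cup B}} = \bigoplus_{S \subseteq A \cup B} \Lambda_S$. The key claim is that $\Lambda_{A'} \otimes \Lambda_{B'} = \Lambda_{A' \cup B'}$ for all $A' \subseteq A$, $B' \subseteq B$. The inclusion $\subseteq$ is immediate: for $\mu \in \Lambda_{A'}$ and $\nu \in \Lambda_{B'}$, the product $\mu\nu$ depends only on $x_{A' \cup B'}$, and summing it over the states of any single coordinate $v \in A' \cup B'$ yields zero, because one of the two factors already sums to zero over $x_v$. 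For the reverse inclusion I would use dimension counting: the assignment $(A', B') \mapsto A' \cup B'$ is a bijection from $\{(A', B') : A' \subseteq A,\; B' \subseteq B\}$ onto $\{S : S \subseteq A \cup B\}$ (using disjointness), so the two displayed direct sums are indexed compatibly and satisfy $\Lambda_{A'} \otimes \Lambda_{B'} \subseteq \Lambda_{A' \cup B'}$ term by term; comparing the total dimension $|\X_{A \cup B}|$ on both sides then forces equality of each pair of summands.

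Finally, since $\lambda$ is $(A \cup B)$-degenerate it lies in $\Lambda_{A \cup B}$, which by the claim equals $\Lambda_A \otimes \Lambda_B$; unravelling the tensor product realises $\lambda$ as a (necessarily finite, by finite-dimensionality) sum $\sum_i \lambda_A^i \lambda_B^i$ with $\lambda_A^i \in \Lambda_A$ being $A$-degenerate and $\lambda_B^i \in \Lambda_B$ being $B$-degenerate, as required. I expect the only delicate point to be the bookkeeping between ``degenerate function'' and ``$\Lambda$'' relative to different ambient vertex sets, and the clean verification of $\Lambda_{A'} \otimes \Lambda_{B'} = \Lambda_{A' \cup B'}$; a more hands-on alternative avoids tensor products by inducting on $|B|$, peeling off one coordinate $b \in B$ at a time and expanding $\lambda$ in a basis of $\reals^{\X_b}$ all but one of whose elements is degenerate, but the tensor argument above is cleaner and I would present that.
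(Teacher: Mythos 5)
Your proof is correct, but it takes a genuinely different route from the paper's. The paper's argument is more elementary and does not use Proposition \ref{prop:decomp} at all: it starts from an arbitrary rank-one decomposition $\lambda = \sum_i \lambda_A^i \lambda_B^i$ with no degeneracy assumed (which exists because any matrix is a finite sum of rank-one matrices), and then repairs it by replacing each $\lambda_A^i(x_A)$ with $\lambda_A^i(x_A) - \sum_{y_a}\lambda_A^i(x_{A\setminus a}, y_a)$; a short computation using the degeneracy of $\lambda$ itself shows this correction leaves the sum unchanged, and iterating over all $a\in A$ and $b\in B$ makes every factor degenerate. Your route through the tensor identification $\reals^{|\X_{A\cup B}|}\cong\reals^{|\X_A|}\otimes\reals^{|\X_B|}$ is sound: the inclusion $\Lambda_{A'}\otimes\Lambda_{B'}\subseteq\Lambda_{A'\cup B'}$ is verified correctly, the index map $(A',B')\mapsto A'\cup B'$ is a bijection by disjointness, and term-by-term containment between two direct-sum decompositions of the same finite-dimensional space does force equality of each pair of summands. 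What your version buys is structural clarity --- the lemma becomes exactly the statement that the interaction decomposition is multiplicative over disjoint unions of coordinates, and the per-summand identity $\Lambda_{A'}\otimes\Lambda_{B'}=\Lambda_{A'\cup B'}$ is a slightly stronger fact that makes the dimension formula $\dim\Lambda_S=\prod_{v\in S}(|\X_v|-1)$ transparent. What the paper's version buys is brevity, self-containedness, and an explicit construction of the degenerate factors from any initial decomposition (essentially projecting each factor onto the degenerate subspace). The bookkeeping point you flag is indeed the only delicate one, and it is harmless: an $A'$-degenerate function has the same dimension count whether regarded inside $\reals^{|\X_A|}$ or $\reals^{|\X_{A\cup B}|}$, since its values depend only on $x_{A'}$, so the summands being compared really do correspond.
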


\begin{proof}
Since a matrix can be written as a sum of rank one matrices, clearly we can find (not necessarily degenerate) functions such that the result holds.  
But now suppose that the $\lambda_{A}^i$ are not degenerate over $a \in A$, and consider
\begin{align*}
\lefteqn{\sum_i \left(\lambda_{A}^i(x_A) - \sum_{y_a} \lambda_{A}^i(x_{A \setminus a}, y_a)\right) \lambda_{B}^i(x_B)}\\
&= \sum_i \lambda_{A}^i(x_A) \lambda_{B}^i(x_B) - \sum_{y_a} \sum_i \lambda_{A}^i(x_{A \setminus a}, y_a) \lambda_{B}^i(x_B)\\
&= \lambda(x_A, x_B) - \sum_{y_a} \lambda(y_a, x_{A \setminus a}, x_B)\\
&= \lambda(x_A, x_B).
\end{align*}
Thus we can replace each $\lambda_{A}^i$ with the degenerate function 
\begin{align*}
\tilde{\lambda}_{A}^{i}(x_A) \equiv \left(\lambda_{A}^i(x_A) - \sum_{y_a} \lambda_{A}^i(x_{A \setminus a}, y_a) \right)
\end{align*}
and not affect the result.  By repeating the argument we can assume that each $\lambda_{A}^i$ is degenerate in every $a \in A$, and each $\lambda_{B}^i$ degenerate in every $b \in B$.
\end{proof}

\begin{lem} \label{lem:diff2}
Let $\lambda$ be a discrete $(A \triangle B)$-degenerate function.  Then $\lambda$ can be written as a finite sum
\begin{align*}
\lambda = \sum_j \lambda_{A}^j \lambda_{B}^j
\end{align*}
of $A$-degenerate functions $\lambda_{A}^j$, and $B$-degenerate functions $\lambda_{B}^j$.
\end{lem}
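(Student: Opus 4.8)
The plan is to separate the coordinates into the part that genuinely lies in the symmetric difference and the ``overlap'' $C = A \cap B$, reduce the symmetric-difference part to Lemma~\ref{lem:rankone}, and absorb the overlap using a small algebraic identity about degenerate functions on $\X_C$. Write $A' = A \setminus B$ and $B' = B \setminus A$, so that $A \triangle B = A' \cup B'$ with $A'$ and $B'$ disjoint. We may assume $|\X_v| \ge 2$ for every $v \in A \cup B$, since otherwise every $A$- and every $B$-degenerate function (and $\lambda$ itself) is identically zero, and the statement is trivial.

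First, since $A \triangle B = A' \cup B'$ with $A' \cap B' = \emptyset$, Lemma~\ref{lem:rankone} gives a finite decomposition $\lambda = \sum_i \mu_{A'}^i\, \mu_{B'}^i$, where each $\mu_{A'}^i$ is an $A'$-degenerate function of $x_{A'}$ and each $\mu_{B'}^i$ is a $B'$-degenerate function of $x_{B'}$. If $C = \emptyset$ then $A' = A$, $B' = B$ and this is already the required representation, so assume $C \neq \emptyset$.

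The key step is the claim that the constant function $\Ind$ on $\X_C$ can be written as a finite sum $\sum_m g_m\, h_m$ of products of $C$-degenerate functions $g_m, h_m : \X_C \to \reals$. I would prove this first for a single variable $c$. If $|\X_c| = 2$ with states $a,b$, then the degenerate function equal to $+1$ at $a$ and $-1$ at $b$ squares to $\Ind$. If $|\X_c| \ge 3$, then for each state $s$ one picks two further distinct states $t, u$, and the pointwise product of the two degenerate functions ``indicator of $s$ minus indicator of $t$'' and ``indicator of $s$ minus indicator of $u$'' equals the indicator of $s$; summing over $s \in \X_c$ gives $\Ind$. For general $C$, multiply such single-variable identities: a product over $c \in C$ of $\{c\}$-degenerate functions is $C$-degenerate, so expanding $\prod_{c \in C}\big(\sum_l g_{c,l}\, h_{c,l}\big)$ exhibits $\Ind$ on $\X_C$ as a finite sum $\sum_m g_m\, h_m$ with every $g_m$ and $h_m$ being $C$-degenerate.

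Finally, combine the two decompositions: set $\lambda_A^{i,m}(x_A) \equiv \mu_{A'}^i(x_{A'})\, g_m(x_C)$ and $\lambda_B^{i,m}(x_B) \equiv \mu_{B'}^i(x_{B'})\, h_m(x_C)$. Since $A'$ and $C$ are disjoint, $\lambda_A^{i,m}$ is degenerate in every coordinate of $A' \cup C = A$, and likewise $\lambda_B^{i,m}$ is $B$-degenerate; moreover
\begin{align*}
\sum_{i,m} \lambda_A^{i,m}(x_A)\, \lambda_B^{i,m}(x_B) = \Big(\sum_i \mu_{A'}^i\, \mu_{B'}^i\Big)\Big(\sum_m g_m(x_C)\, h_m(x_C)\Big) = \lambda(x_{A'}, x_{B'}),
\end{align*}
which is the desired representation. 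The only non-routine ingredient is the identity of the third paragraph --- that the constant function on $\X_C$ lies in the linear span of products of pairs of degenerate functions --- and I expect the remainder to be routine bookkeeping once that is established.
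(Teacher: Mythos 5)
Your proof is correct and follows essentially the same route as the paper: both reduce to Lemma~\ref{lem:rankone} on the disjoint pieces $A\setminus B$ and $B\setminus A$, and then absorb the overlap $D=A\cap B$ by writing the constant function on $\X_D$ as a finite sum of products of pairs of $D$-degenerate functions. The paper realizes that identity with the explicit family $\eta_D(x_D;y_D)\propto\prod_{d\in D}\bigl(|\X_d|\Ind_{\{x_d=y_d\}}-1\bigr)$, normalized so that $\sum_{y_D}\eta_D(\,\cdot\,;y_D)^2=1$, whereas you build it from indicator differences; the two constructions are interchangeable (and both tacitly need $|\X_d|\ge 2$ on the overlap --- your stated justification for discarding that case is slightly off, since a one-point state space at some $d\in A\cap B$ annihilates every $A$-degenerate function but not $\lambda$, which does not depend on $x_d$).
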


\begin{proof}
Let $A' = A \setminus B$ and $B' = B \setminus A$ and $D = A \cap B$, so that $A \triangle B = A' \cup B'$, $A = A' \cup D$ and $B = B' \cup D$; note that $A'$, $B'$ and $D$ are all disjoint.

For each $y_D \in \X_D$, define a degenerate function $\eta_D( \cdot; \, y_D) : \X_D \rightarrow \reals$ by
\begin{align*}
\eta_D(x_D; \, y_D) = \alpha^{-1} \prod_{d \in D} \left( |\X_d| \Ind_{\{x_d=y_d\}} - 1 \right).
\end{align*}
where $\alpha = \prod_{d \in D} |\X_d| \cdot (|\X_d|-1)$ and $\Ind$ denotes an indicator
function.  One can verify easily that 
\begin{align*}
\sum_{x_d \in \X_d} \eta_D(x_D; \, y_D) = 0
\end{align*}
for any $y_D$ and $x_{D\setminus d}$, and that
\begin{align*}
\sum_{y_D \in \X_D} \eta_D(x_D; \, y_D)^2 = 1;
\end{align*}
in particular the last expression is independent of $x_D$.

Now, let $\lambda$ be a discrete $(A \triangle B)$-degenerate function, and using Lemma \ref{lem:rankone} write it as
\begin{align*}
\lambda = \sum_{i=1}^j \lambda_{A'}^j \lambda_{B'}^j
\end{align*} 
where $\lambda_{A'}^j$ and $\lambda^j_{B'}$ are respectively $A'$ and $B'$ degenerate.  
Then for each $k \in \X_D$, define $\lambda_{A}^{jk} = \lambda_{A'}^j \eta_D( \cdot ; k)$ and $\lambda_{B}^{jk} = \lambda_{B'}^j \eta_D( \cdot ; k)$.  Clearly each of these is degenerate in $A = A' \cup D$ and $B=B'\cup D$ respectively.  Further, 
\begin{align*}
\sum_{i=1}^j \sum_{k \in \X_D} \lambda_{A}^{jk} \, \lambda_{B}^{jk} &= \sum_{i=1}^j \sum_{k \in \X_D} \lambda_{A'}^j \, \lambda_{B'}^j \, \eta_D(\cdot; \, k)^2\\
&= \sum_{i=1}^j \lambda_{A'}^j \, \lambda_{B'}^j \, \sum_{k \in \X_D} \eta_D(\cdot; \, k)^2\\
&= \sum_{i=1}^j \lambda_{A'}^j \, \lambda_{B'}^j\\
&= \lambda.
\end{align*}
\end{proof}

\begin{lem} \label{lem:diff} Let $\lambda : \X_A \rightarrow \reals$
  be an $A$-degenerate function, and let $A = \bigtriangleup_{i \in I}
  A_i$ for some finite collection of sets $\{A_i : i \in I\}$.  Then
  there exists a finite collection of $A_i$-degenerate functions
  $\lambda_i^j : \X_{A_i} \rightarrow \reals$ for $i \in I, j \in J$,
  such that
\begin{align*}
\lambda = \sum_{j \in J} \prod_{i \in I} \lambda_i^j.
\end{align*}
\end{lem}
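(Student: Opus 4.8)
The plan is to prove the statement by induction on the cardinality $|I|$ of the index set, with Lemma~\ref{lem:diff2} (and, through it, Lemma~\ref{lem:rankone}) doing the work in the inductive step. First I would dispose of the base case $|I|=1$: here $A = A_1$, so taking $J$ a singleton and $\lambda_1^1 = \lambda$ gives the claim immediately. (The case $|I|=2$ is exactly Lemma~\ref{lem:diff2}, so one could equally well take that as the base of the induction.)

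For the inductive step, assume $|I| \geq 2$ and that the result holds for all strictly smaller index sets. Fix any $i_0 \in I$ and set $B \equiv \bigtriangleup_{i \in I \setminus \{i_0\}} A_i$. Since $\triangle$ is associative, $A = A_{i_0} \triangle B$, so $\lambda$ is $(A_{i_0} \triangle B)$-degenerate, and Lemma~\ref{lem:diff2} — which imposes no disjointness condition on the two sets — produces a finite collection of $A_{i_0}$-degenerate functions $\alpha^j$ and $B$-degenerate functions $\beta^j$ with $\lambda = \sum_j \alpha^j \beta^j$. Each $\beta^j$ is degenerate with respect to $B$, a symmetric difference of the $|I|-1$ sets $A_i$, $i \in I \setminus \{i_0\}$, so the inductive hypothesis yields finite collections of $A_i$-degenerate functions $\gamma_i^{jk}$ with $\beta^j = \sum_k \prod_{i \in I \setminus \{i_0\}} \gamma_i^{jk}$. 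Substituting and distributing,
\[
\lambda = \sum_j \alpha^j \sum_k \prod_{i \in I \setminus \{i_0\}} \gamma_i^{jk} = \sum_{(j,k)} \alpha^j \prod_{i \in I \setminus \{i_0\}} \gamma_i^{jk},
\]
and relabelling $J' = \{(j,k)\}$, $\lambda_{i_0}^{(j,k)} = \alpha^j$, and $\lambda_i^{(j,k)} = \gamma_i^{jk}$ for $i \neq i_0$ gives $\lambda = \sum_{(j,k) \in J'} \prod_{i \in I} \lambda_i^{(j,k)}$ with every factor $A_i$-degenerate, completing the induction.

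I do not expect a genuine obstacle here; the argument is essentially bookkeeping. The three points that warrant a moment's attention are: (i) using associativity of $\triangle$ to peel a single index $i_0$ off the symmetric difference; (ii) checking that Lemma~\ref{lem:diff2} does not require its two argument sets to be disjoint, since in general $A_{i_0}$ and $B = \bigtriangleup_{i \neq i_0} A_i$ will overlap; and (iii) noting that all sums stay finite when the inductive expansion of $\beta^j$ is substituted, so the resulting representation is a bona fide finite sum of products of $A_i$-degenerate functions.
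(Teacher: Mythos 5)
Your proof is correct and is essentially the paper's argument made explicit: the paper's proof of this lemma is the single sentence ``this just follows from repeatedly applying Lemma~\ref{lem:diff2},'' and your induction on $|I|$, peeling off one index at a time via Lemma~\ref{lem:diff2}, is precisely what that repetition amounts to. Your side remarks (associativity of $\triangle$, no disjointness hypothesis in Lemma~\ref{lem:diff2}, finiteness of the resulting sum) are all accurate.
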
 

\begin{proof}
This just follows from repeatedly applying Lemma \ref{lem:diff2}.
\end{proof}

\subsection{Proof of Lemma \ref{lem:single}} \label{sec:singleproof}

\begin{lem} \label{lem:eval}
Let $\X$ and $\mathcal{Y}$ be finite sets, define 
$\F = \{f : \X \rightarrow \mathcal{Y}\}$, 
and take $\lambda : \mathcal{Y} \rightarrow \reals$.  
Then for any $A \subseteq \mathcal{Y}$ and $x \in \mathcal{X}$,
\begin{align*}
\sum_{\substack{f\in \F \\ f(x) \in A}} \lambda(f(x)) = |\mathcal{Y}|^{|\X|-1} \sum_{y \in A} \lambda(y),
\end{align*}
and if $x_1 \neq x_2$, 
\begin{align*}
\sum_{\substack{f\in \F \\ f(x_1) \in A}} \lambda(f(x_2)) = |A||\mathcal{Y}|^{|\X|-2} \cdot \sum_{y \in \mathcal{Y}} \lambda(y).
\end{align*}
\end{lem}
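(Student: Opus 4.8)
The plan is to prove both identities by a direct counting argument over the finite function space $\F = \{f : \X \to \Y\}$, which has $|\Y|^{|\X|}$ elements. The key observation is that a function $f \in \F$ is determined by its values at each point of $\X$ independently, so sums over $\F$ factorize as products over the points of $\X$.

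For the first identity, I would partition $\F$ according to the value $f(x) = y$. For a fixed $y \in \Y$, the number of functions $f \in \F$ with $f(x) = y$ is exactly $|\Y|^{|\X| - 1}$, since the values of $f$ at the remaining $|\X| - 1$ points of $\X$ are unconstrained. Therefore
\begin{align*}
\sum_{\substack{f \in \F \\ f(x) \in A}} \lambda(f(x)) = \sum_{y \in A} \sum_{\substack{f \in \F \\ f(x) = y}} \lambda(y) = \sum_{y \in A} |\Y|^{|\X| - 1} \lambda(y) = |\Y|^{|\X| - 1} \sum_{y \in A} \lambda(y),
\end{align*}
as required.

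For the second identity, with $x_1 \neq x_2$, I would partition $\F$ according to the pair of values $(f(x_1), f(x_2)) = (y_1, y_2) \in \Y \times \Y$. Because $x_1 \neq x_2$, these two values can be assigned independently, and the remaining $|\X| - 2$ points are unconstrained, so the number of functions with a given such pair is $|\Y|^{|\X| - 2}$. Splitting the constraint $f(x_1) \in A$ and the summand $\lambda(f(x_2))$ over $y_1 \in A$ and $y_2 \in \Y$ gives
\begin{align*}
\sum_{\substack{f \in \F \\ f(x_1) \in A}} \lambda(f(x_2)) = \sum_{y_1 \in A} \sum_{y_2 \in \Y} |\Y|^{|\X| - 2} \lambda(y_2) = |A| \, |\Y|^{|\X| - 2} \sum_{y_2 \in \Y} \lambda(y_2).
\end{align*}
This is a purely combinatorial argument with no real obstacle; the only point requiring care is the bookkeeping on the exponent of $|\Y|$, which depends on how many coordinates of $f$ are pinned down (one in the first case, two in the second). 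One should also note the degenerate edge cases $|\X| = 1$ (in which the first formula reads $\sum_{y \in A}\lambda(y)$ and the second is vacuous as $x_1 \neq x_2$ is impossible) and, where the exponent would be $|\X|-2 < 0$, observe that these do not arise under the stated hypothesis $x_1 \neq x_2$ which forces $|\X| \geq 2$.
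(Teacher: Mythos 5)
Your proof is correct and takes essentially the same approach as the paper: the paper likewise counts $|\mathcal{Y}|^{|\X|-1}$ functions with $f(x)=y$, reduces general $A$ to $A=\mathcal{Y}$ by replacing $\lambda(y)$ with $\lambda(y)\Ind_A(y)$, and states that the second identity follows ``by similar combinatorical methods.'' Your write-up is in fact slightly more explicit, since you carry out the two-point count giving the factor $|A|\,|\mathcal{Y}|^{|\X|-2}$ rather than leaving it as a sketch.
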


In particular note that if $\lambda$ is degenerate, the last expression is zero.

\begin{proof}
Clearly if $A = \mathcal{Y}$, then 
\begin{align*}
\sum_{\substack{f\in \F \\ f(x) \in \mathcal{Y}}} \lambda(f(x)) &= \sum_{f \in \F} \lambda(f(x))\\
&= |\mathcal{Y}|^{|\X|-1} \sum_{y \in \mathcal{Y}} \lambda(y),
\end{align*}
since there are exactly $|\mathcal{Y}|^{|\X|-1}$ functions in $\F$ such that $f(x) = y$ for each $y \in \mathcal{Y}$.  The first result follows in general by applying the result for $A=\mathcal{Y}$ to
the function $\lambda'(y) = \lambda(y) \Ind_{\{y \in A\}}$. 

The second result follows by similar combinatorical methods.
\end{proof}

\begin{proof}[Proof of Lemma \ref{lem:single}]
  It is clear that we only need prove the result for $E = \emptyset$,
  since we can just incorporate $f_E$ as though they were observable
  parents of $C$, and the result is the same.  

First consider the case $C = \{v\}$; let $L = \pa_\G(v)$ and take any set $K \subseteq L$.  Let $\lambda : \X_v \times \X_K \rightarrow \reals$ be a degenerate function, and for each $f: \X_L \times \F_{\pi(x)} \rightarrow \X_v$, define 
\begin{align*} 
\delta(f) = \sum_{\substack{y_{L} \in \X_{L} \\ g_{\pi(v)} \in \F_{\pi(v)}}} \lambda(f(y_{L}, g_{\pi(v)}), y_{K}).
\end{align*} 
Then for fixed $x_v, x_L, f_{\pi(v)}$,
\begin{align*}
\sum_{\substack{f \in \F_v \\ f(x_{L}, f_{\pi(v)}) = x_v}} \delta(f) &= \sum_{\substack{f \in \F_v \\ f(x_{L}, f_{\pi(v)}) = x_v}} \sum_{\substack{y_{L} \in \X_{L} \\ g_{\pi(v)} \in \F_{\pi(v)}}} \lambda(f(y_{L}, g_{\pi(v)}), y_{K})\\
&= \sum_{\substack{y_{L} \in \X_{L} \\ g_{\pi(v)} \in \F_{\pi(v)}}} \sum_{\substack{f \in \F_v \\ f(x_{L}, f_{\pi(v)}) = x_v}} \lambda(f(y_{L}, g_{\pi(v)}), y_{K}).
\intertext{But since $\lambda$ is degenerate, the inner sum is zero unless both $x_L = y_L$ and $f_{\pi(v)} = g_{\pi(v)}$ by Lemma \ref{lem:eval}.  This leaves }
  &= \sum_{\substack{f \in \F_v \\ f(x_{L}, f_{\pi(v)}) = x_v}} \lambda(f(x_{L}, f_{\pi(v)}), x_{K})\\
  &= |\X_v|^{|\X_L||\F_{\pi(v)}|-1} \cdot \lambda(x_v, x_{K})
\end{align*}
again by Lemma \ref{lem:eval}, where the constant represents the 
number of distinct functions $f \in \F_v$ such that $f(x_L, f_{\pi(v)}) = x_v$.  
Hence the result holds for $C = \{v\}$.

Now consider a general $C \subseteq R_i$; we prove the result by
induction on the size of $C$.  Given any $\sterile_\G(C) \subseteq A
\subseteq C \cup \pa_\G(C)$, we first claim that we can write $A = A_1
\triangle A_2$ where $\sterile_\G(C_i) \subseteq A_i \subseteq C_i
\cup \pa_\G(C_i)$ for $i=1,2$ and disjoint non-empty $C_1,C_2$ with
$C_1 \cup C_2 = C$.

To see this pick $C_2 = \{w\}$, $C_1 = C \setminus \{w\}$ for some $w
\in \sterile_\G(C)$, and then set $A_1 = (A \cup \sterile_\G(C_1))
\cap (C_1 \cup \pa_\G(C_1))$ and $A_2 = A \setminus A_1$.  Clearly
$A_1$ satisfies the required conditions.  Since $w$ was
chosen to be sterile in $C$ we have $w \notin A_1$ and therefore $w \in A_2$; in
addition, the only elements of $A$ not contained in $A_1$ are those
which are neither in $C_1$ nor $\pa_\G(C_1)$; but since they are in $C
\cup \pa_\G(C)$, they must instead be in $\{w\} \cup \pa_\G(w)$. 
Hence the claim holds. 

Now first suppose that 
$\lambda = \lambda_1 \cdot \lambda_2$ for degenerate functions $\lambda_i: \X_{A_i} \rightarrow \reals$.
By the induction hypothesis, we can find degenerate $\delta_1, \delta_2$ such that 
\begin{align*}
\sum_{\substack{f_v(x, f) = x_{v} \\ v \in C_1}} \delta_{1}(f_{C_1}) = c_1 \cdot \lambda_{1}(x_{A_1})\\
\sum_{\substack{f_v(x, f) = x_{v} \\ v \in C_2}} \delta_{2}(f_{C_2}) = c_2 \cdot \lambda_{2}(x_{A_2}).
\end{align*}
(Here we have written $f_v(x, f)$ for $f_v(x_L, f_{\pi_v})$ to reduce notational clutter.)
Then letting $E = R_i \setminus C$, 
\begin{align*}
\sum_{\substack{f_v(x, f) = x_{v} \\ v \in R_i}} \delta_{1}(f_{C_1}) \cdot \delta_{2}(f_{C_2}) &= 
\sum_{\substack{f_v(x, f) = x_{v} \\ v \in E}}
\sum_{\substack{f_v(x, f) = x_{v} \\ v \in C_1}}
\sum_{\substack{f_v(x, f) = x_{v} \\ v \in C_2}} \delta_{1}(f_{C_1}) \cdot \delta_{2}(f_{C_2})\\
&= c_0 \sum_{\substack{f_v(x, f) = x_{v} \\ v \in C_1}} \sum_{\substack{f_v(x, f) = x_{v} \\ v \in C_2}} \delta_{1}(f_{C_1}) \cdot \delta_{2}(f_{C_2})\\ 
&= c_0 \sum_{\substack{f_{C_1} \in \F_{C_1} \\ f_{C_1}(\x) = x_{C_1}}} \delta_{1}(f_{C_1})
\sum_{\substack{f_{C_2} \in \F_{C_2} \\ f_{C_2}(\x) = x_{C_2}}} \delta_{2}(f_{C_2})\\
&= c_0 c_1 c_2 \cdot \lambda_{1}(x_{A_1}) \cdot \lambda_{2}(x_{A_2}).
\end{align*}
However a general degenerate function $\lambda : \X_A \rightarrow \reals$ can be written as a finite linear combination 
\begin{align*}
\lambda = \sum_j \lambda_1^j \cdot \lambda_2^j
\end{align*}
of degenerate functions $\lambda_i^j : \X_{A_i} \rightarrow \reals$, so the result follows by linearity of summations.

For the final part, note that if $v \in C \setminus B$, then the
summation over $\Phi_i^B$ will include every function $f_v \in
\mathcal{F}_v$.  Since $\delta$ is degenerate and a function of $f_v$,
the sum is 0.  On the other hand, if $v \in (R_i \cap B) \setminus
C$, then $\delta$ is not a function of $f_v$ and summing over all
$\mathcal{F}_v$ just involves $|\X_v|$ identical terms.
\end{proof}

\begin{lem} \label{lem:pickatree}
Let $\G$ be a single-district, geared mDAG, and $C$ a bidirected-connected set of vertices.  There exists a rooted tree $\Pi_C$ with vertex set
\begin{align*}
I_C = \{i \,|\, R_i \cap C \neq \emptyset\},
\end{align*}
and edges $i \rightarrow j$ only if there exist $v_j \in R_j \cap C$ and $v_i \in R_i \cap C$ such that $v_j \in \pi(v_i)$.  
\end{lem}

\begin{proof}[Proof of Lemma \ref{lem:pickatree}]
  First construct a directed graph $\Pi^*$ on $I_C$ in which $i
  \rightarrow j$ precisely when there exist $v_j \in R_j \cap C$ and
  $v_i \in R_i \cap C$ such that $v_j \in \pi(v_i)$.  
  Note that $v_j \in \pi(v_i)$ implies $r(v_j) > r(v_i)$, so $\Pi^*$ 
  is acyclic.

Let $j$ be the minimal element of $I_C$; we claim that for any other
$i \in I_C$, there is always a directed path in $\Pi_C$ from $j$ to
$i$.  To see this, note that since $C$ is bidirected-connected, there
is a bidirected path in $\G$ from some $v_j \in C \cap R_j$ to $v_i
\in C \cap R_i$; given such a path, $\rho$, trim it so that only the
end-points are in $C \cap R_j$ and $C \cap R_i$ respectively.

If $\rho$ is just $v_j \leftrightarrow v_i$, then we are done, since
$v_j \in \pi(v_i)$ by definition of $\pi$.  Otherwise, $\rho$ begins
$v_i \leftrightarrow v_k \leftrightarrow \cdots$ for some $v_k \in R_k
\cap C$, where $i > k > j$.  So we can apply an inductive argument to
find a path from $j$ to $k$ in $\Pi_C^*$, and the edge $v_i
\leftrightarrow v_k$ implies that $k \rightarrow i$ in $\Pi_C^*$.

Now, $\Pi^*_C$ is a connected DAG with a unique root node $j$, so we 
can simply take any singly connected subgraph $\Pi_C$ to fulfil the 
conditions of the lemma.
\end{proof}


\end{document}